\theoremstyle{theorem}
\newtheorem{thmti}{Theorem}
\newtheorem{thmt}{Theorem}[section]
\newtheorem{lem}[thmt]{Lemma}
\newtheorem{lemi}[thmti]{Lemma}
\newtheorem{prop}[thmt]{Proposition}
\newtheorem{propi}[thmti]{Proposition}
\newtheorem{rem}{Remark}[section]
\newtheorem*{lem*}{Lemme}
\newtheorem{thm}{Theorem}
\title{}
\date{}
\DeclareRobustCommand{\cev}[1]{%
  \mathpalette\do@cev{#1}%
}
\newcommand{\do@cev}[2]{%
  \fix@cev{#1}{+}%
  \reflectbox{$\m@th#1\vec{\reflectbox{$\fix@cev{#1}{-}\m@th#1#2\fix@cev{#1}{+}$}}$}%
  \fix@cev{#1}{-}%
}
\newcommand{\fix@cev}[2]{%
  \ifx#1\displaystyle
    \mkern#23mu
  \else
    \ifx#1\textstyle
      \mkern#23mu
    \else
      \ifx#1\scriptstyle
        \mkern#22mu
      \else
        \mkern#22mu
      \fi
    \fi
  \fi
}
\def\N{{\mathbb N}}
\def\R{{\mathbb R}}
\def\P{{\mathbb P}}
\def\E{{\mathbb E}}
\def\sgn{\mathop{\rm sgn}\nolimits}
\newtheoremstyle{ouech}
{\topsep}
{\topsep}
{\upshape} 
{} 
{\bfseries} 
{} 
{\newline}
{\thmname{#1}\thmnumber{ #2}\thmnote{#3}} 
\theoremstyle{ouech}
\theoremstyle{exemple}
\theoremstyle{ouech}
\theoremstyle{ouech}
\numberwithin{equation}{section}
\begin{document}

\title{About the asymptotic behaviour of the martingale associated with the Vertex Reinforced Jump Process on trees and $\mathbb{Z}^d$}
\author{V.~Rapenne}
\maketitle
\begin{center}
Institut Camille Jordan
\end{center}
\begin{abstract}
We study the asymptotic behaviour of the martingale $(\psi_n(o))_{n\in\N}$ associated with the Vertex Reinforced Jump Process (VRJP). We show that it is bounded in $L^p$ for every $p>1$ on trees and uniformly integrable on $\mathbb{Z}^d$ in all the transient phase of the VRJP. Moreover, when the VRJP is recurrent on trees, we have good estimates on the moments of $\psi_n(o)$ and we can compute the exact decreasing rate $\tau$ such that $n^{-1}\ln(\psi_n(o))\sim -\tau$ almost surely where $\tau$ is related to standard quantities for branching random walks. Besides, on trees, at the critical point, we show that $n^{-1/3}\ln(\psi_n(o))\sim -\rho_c$ almost surely where $\rho_c$ can be computed explicitely. Furthermore, at the critical point, we prove that the discrete process associated with the VRJP is a mixture of positive recurrent Markov chains. Our proofs use properties of the $\beta$-potential associated with the VRJP and techniques coming from the domain of branching random walks.
\end{abstract}
\vspace{1.5 cm}
\section{Introduction and first definitions}\label{introductionchap3}

Let $(V,E)$ be a locally finite graph. Let $W>0$. In \cite{davol}, Davis and Volkov introduced a continuous self-reinforced random walk $(Y_s)_{s\geq 0}$ known as the Vertex Reinforced Jump Process (VRJP) which is defined as follows: the VRJP starts from some vertex $i_0\in V$ and conditionally on the past before time $s$, it jumps from a vertex $i$ to one of its neighbour $j$ at rate $WL_j(s)$ where 
$$L_j(s)=1+\int_0^s\textbf{1}\{Y_u=s\}du.$$
In \cite{sabot_tarres}, Sabot and Tarrès defined the time-change $D$ such that for every $s\geq 0$,
$$ D(s)=\sum\limits_{i\in V}\left( L_i(s)^2-1\right).$$
Then, they introduced the time-changed process $(Z_t)_{t\geq 0}=(Y_{D^{-1}(t)})_{t\geq 0}$. If $V$ is finite, this process is easier to analyse than $Y$ because it is a mixture of Markov processes whose mixing field has a density which is known explicitely. The density of the mixing field of $Z$ was already known as a hyperbolic supersymmetric sigma model. This supersymmetric model was first studied in \cite{DSZ} and \cite{Disp} and Sabot and Tarrès combined these previous works with their own results in order to make some important progress in the knowledge of the VRJP. However, their formula for the density of the environment of the VRJP was true only on finite graphs. This difficulty has been solved in \cite{SZT} and \cite{sabotzeng} where Sabot, Tarrès and Zeng introduced a $\beta$-potential with some distribution $\nu_V^W$ which allows to have a representation of the environment of the VRJP on infinite graphs. Thanks to this $\beta$-potential, Sabot and Zeng introduced a positive martingale $(\psi_{n}(o))_{n\in\N}$ which converges toward some random variable $\psi(o)$. A remarkable fact is that $\psi(o)=0$ if and only if the VRJP is recurrent. Moreover, they proved a 0-1 law for transitive graphs. On these graphs, the VRJP is either almost surely recurrent or almost surely transient.

We can study the VRJP on any locally finite graph $V$. However, in this paper, we will focus only on the two most important cases:
\begin{itemize}
\item First, we can consider the case where $V=\mathbb{Z}^d$. In this case, when $d\in\{1,2\}$, the VRJP is always recurrent. (See \cite{sabotzeng}, \cite{polyloc} and \cite{kozmapeled}.) On the contrary, when $d\geq 3$, Sabot and Tarrès proved in \cite{sabot_tarres} that the time-changed VRJP is recurrent for small $W$ and that it is transient for large $W$. Further, in \cite{Poudevigne}, thanks to a clever coupling of $\psi_n(o)$ for different weights, Poudevigne proved there is a unique transition point $W_c(d)$ between recurrence and transience on $\mathbb{Z}^d$ if $d\geq 3$.
\item Another interesting case for the VRJP is when $V$ is a tree. In this case, the environment of the VRJP is easy to describe thanks to independent Inverse Gaussian random variables. Using this representation of the environment, in \cite{chenzeng}, Chen and Zeng proved there is a unique phase transition between recurrence and transience on supercritical Galton-Watson trees for the time-changed VRJP. (This result was already proved in \cite{Basdevant} but the proof of \cite{Basdevant} was very different and did not use the representation of the VRJP as a mixture of Markov processes.) Furthermore the transition point $W_c(\mu)$ can be computed explicitely and depends only on the mean of the offspring law $\mu$ of the Galton-Watson tree.
\end{itemize}

Therefore, if $V$ is a Galton-Watson tree or $\mathbb{Z}^d$ with $d\geq 3$, the following dichotomy is known: there exists $W_c\in\R_+^*$ (depending on V) such that
\begin{align}
\text{If } W< W_c\text{, then a.s, } \psi(o)=0\text{, i.e the VRJP is recurrent}. \nonumber\\
\text{If } W>W_c\text{, then a.s, } \psi(o)>0\text{, i.e the VRJP is transient}.\nonumber
\end{align}

The recurrence of the VRJP can be regarded as a form of "strong disorder". Indeed, if $W$ is small, the reinforcement, i.e the disorder of the system compared to a simple random walk, is very strong. Therefore, the martingale $(\psi_n(o))_{n\in\N}$ associated with the system vanishes only when there is strong disorder. This situation is reminiscent of directed polymers in random environment. One can refer to \cite{Comets} for more information on this topic. In the case of directed polymers, there is a positive martingale $(\mathcal{M}_n)_{n\in\N}$ which converges toward a random variable $\mathcal{M}_{\infty}$. $(\mathcal{M}_n)_{n\in\N}$ and $(\psi_n(o))_{n\in\N}$ play analoguous roles in different contexts. Indeed, $\mathcal{M}_{\infty}>0$ a.s if and only if the system exhibits "weak disorder", exactly as for $\psi(o)$. However, on $\mathbb{Z}^d$ or on trees, this is possible that $\mathcal{M}_{\infty}>0$ a.s but $(\mathcal{M}_n)_{n\in\N}$ is not bounded in $L^2$. (See \cite{CC} and \cite{Buffet_1993}.) 
Therefore, a natural question regarding $(\psi_n(o))_{n\in\N}$ is to know when it is bounded in $L^p$ for a fixed value of $p>1$. Moreover, as shown in the proof of Theorem 3 in \cite{sabotzeng}, $L^p$ boundedness of the martingale $(\psi_n(o))_{n\in\N^*}$ on $\mathbb{Z}^d$ for sufficiently large $p$ implies the existence of a diffusive regime for the VRJP, i.e the VRJP satisfies a central-limit theorem. We would like to know whether this diffusive regime coincides with the transient regime or not. This gives another good reason to study the moments of $(\psi_n(o))_{n\in\N}$. Using \cite{DSZ}, \cite{sabotzeng} and \cite{Poudevigne}, one can prove that, on $\mathbb{Z}^d$ with $d\geq 3$, for any $p>1$, there exists a threshold $W^{(p)}(d)$ such that $(\psi_n(o))_{n\in\N}$ is bounded in $L^p$ for every $W>W^{(p)}(d)$. However, we do not know whether $W^{(p)}(d)=W_c(d)$ for every $p>1$ or not. In this paper, we will prove that $(\psi_{n}(o))_{n\in\N}$ is uniformly integrable on $\mathbb{Z}^d$ as soon as the VRJP is transient.  Moreover, we will prove that $(\psi_n(o))_{n\in\N}$ is bounded in $L^p$ for any $p>1$ as soon as $W>W_c(\mu)$ on trees. 

Furthermore, we will also look at the rate of convergence toward $0$ of $(\psi_n(o))_{n\in\N}$ on trees when $W<W_c(\mu)$ under mild assumptions. We have a $L^p$ version and an almost sure version of the estimate of the decay of $(\psi_n(o))_{n\in\N}$ toward $0$.

Finally a natural question consists in finding the behaviour of the VRJP at the critical point $W_c$. On Galton-Watson trees, it was proved in \cite{chenzeng} or \cite{Basdevant} that the time-changed VRJP is a mixture of recurrent Markov processes at the critical point. In this paper, we prove that it is even a mixture of positive recurrent Markov processes. However the asymptotic behaviour of the VRJP at the critical point on $\mathbb{Z}^d$ remains unknown. We will also compute the rate of convergence  of $(\psi_n(o))_{n\in\N}$ on trees when $W=W_c(\mu)$.
\section{Context and statement of the results}
\subsection{General notation}
Let $(V,E)$ be a locally finite countable graph with non oriented edges. We assume that $V$ has a root $o$. We write $i\sim j$ when $\{i,j\}\in E$. For every $n\in\N$, we define $V_n:=\{x\in V, d(o,x)\leq n\}$ where $d$ is the graph distance on $(V,E)$. For every $n\in\N^*$, we denote the boundary of $V_n$, that is $\{i\in V_n,\exists j\in V_n^c \text{ such that } \{i,j\}\in E \}$, by $\partial V_n$. Let us denote by $E_n$ the set of edges of $V_n$. 
If $M$ is a matrix (or possibly an operator) with indices in a set $A\times B$, then for every $A'\subset A$ and $B'\subset B$, the restriction of $M$ to $A'\times B'$ is denoted by $M_{A',B'}=(M(i,j))_{(i,j)\in A'\times B'}$. If $M$ is a symmetric matrix, we write $M>0$ when $M$ is positive definite.

In this article, we use a lot the Inverse Gaussian distribution. For every $(a,\lambda)$, recall that an Inverse Gaussian random variable with parameters $(a,\lambda)\in{\R_+^{*}}^2$ has density:
\begin{align}\label{densityig}
\textbf{1}\{x>0\}\left(\frac{\lambda}{2\pi x^3}\right)^{1/2}\exp\left(-\frac{\lambda(x-a)^2}{2a^2 x} \right)dx.
\end{align}
The law of the Inverse Gaussian distribution with parameters $(a,\lambda)\in{\R_+^{*}}^2$ is denoted by $IG(a,\lambda)$. For $W>0$ and $t\in\R$, if $A\sim IG(1,W)$, we write $Q(W,t)=\E\left[A^t\right]$. A well-known property of the Inverse Gaussian distribution states that $Q(W,t)=Q(W,1-t)$.
\subsection{The $\beta$-potential and the martingale $(\psi_n)_{n\in\N}$}
Let $(V,E)$ be an infinite countable graph with non-oriented edges. In this paper, the graph $(V,E)$ will always have a special vertex $o$ called the root. Actually, in our results, $V$ is a rooted tree or $\mathbb{Z}^d$ with root $0$.
Let $W>0$. In \cite{sabotzeng}, the authors introduced a random potential $(\beta_i)_{i\in V}$ on $V$  with distribution $\nu_V^{W}$ such that for every finite subset $U\subset V$, for every $(\lambda_i)_{i\in U}\in \R_+^U$,
\begin{align}\int \exp\left({-\sum\limits_{i\in U}\lambda_i\beta_i}\right)\nu_V^{W}(d\beta)\nonumber\\
&\hspace{-4.1cm}=\exp\left(-\frac{1}{2}\sum\limits_{\substack{i\sim j\\i,j\in U}}W\left(\sqrt{1+\lambda_i}\sqrt{1+\lambda_j}-1\right)-\sum\limits_{\substack{i\sim j\\i\in U,j\notin U}}W\left(\sqrt{1+\lambda_i}-1\right)\right)\frac{1}{\prod\limits_{i\in U}\sqrt{1+\lambda_i}}.\label{defbeta}
\end{align}
Looking at the Laplace transform in \eqref{defbeta}, we see that $(\beta_i)_{i\in V}$ is 1-dependent, that is, if $U_1$ and $U_2$ are finite subsets of $V$ which are not connected by an edge, then $(\beta_i)_{i\in U_1}$ and $(\beta_i)_{i\in U_2}$ are independent under $\nu_V^W$. Moreover, the restriction of this potential on finite subsets has a density which is known explicitely. We give the expression of this density in subsection \ref{margi}. Furthermore, for every $(\beta_i)_{i\in V}$, let us introduce the operator $H_{\beta}$ on $V$ which satisfies:
$$ \forall (i,j)\in V^2, H_{\beta}(i,j)=2\beta_i\textbf{1}\{i=j\}-W\textbf{1}\{i\sim j\}.$$
By proposition 1 in \cite{sabotzeng}, the support of $\nu_V^W$ is 
$$\mathcal{D}_V^W=\{\beta\in \R^V, (H_{\beta})_{U,U}\text{ is positive definite for all finite subsets }U\subset V\} .$$
Therefore, under $\nu_V^W$, for every $n\in\N$, $(H_{\beta})_{V_n,V_n}$ is positive definite. In particular, it is invertible. We denote by $\hat{G}_n$ the inverse of $(H_{\beta})_{V_n,V_n}$. Moreover, for $n\in\N$ and $\beta\in \mathcal{D}_V^W$, let us define $(\psi_n(i))_{i\in V}$ as the unique solution of the equation:
\begin{align}\left\lbrace\begin{array}{ll}
(H_{\beta} \psi_n)(i)=0&\forall i\in V_n\\
\psi_n(i)=1&\forall i\in V_n^c.
\end{array}\right.\label{defipsi}
\end{align}
The idea behind the definition of $(\psi_n)_{n\in\N}$ is to create an eigenstate of $H_{\beta}$ when $n$ goes to infinity. We can make $n$ go to infinity thanks to the following proposition:
\begin{propi}[Theorem 1 in \cite{sabotzeng}]\label{thsabotzengchap3}
For any $i,j\in V$, $(\hat{G}_n(i,j))_{n\in\mathbb{N}^*}$ is increasing $\nu_V^W$-a.s. In particular there exists a random variable $\hat{G}(i,j)$ such that $$\hat{G}_n(i,j)\underset{n\rightarrow+\infty}\longrightarrow \hat{G}(i,j),\hspace{0.5 cm}\nu_V^W\text{ -a.s.}$$
Further, for any $i,j\in V$,
$$\hat{G}(i,j)<+\infty,\hspace{0.5 cm}\nu_V^W\text{ -a.s.}$$
Moreover, $(\psi_n)_{n\in\N}$ is a vectorial martingale with positive components. In particular, for every $i\in V$ the martingale $(\psi_n(i))_{n\in\N}$ has an almost sure limit which is denoted by $\psi(i)$. Besides, $(\hat{G}_n)_{n\in\N}$ is the bracket of $(\psi_n)_{n\in\N}$ in the sense that for every $i,j\in V$, $(\psi_n(i)\psi_n(j)-\hat{G}_n(i,j))_{n\in\N}$ is a martingale.
\end{propi}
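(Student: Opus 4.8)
The plan is to prove the five assertions in turn: entrywise positivity of $\hat G_n$ and of $\psi_n$ (linear algebra on $H_\beta$); monotonicity of $(\hat G_n)_n$ (domain monotonicity of Green's functions via Schur complements); the martingale and the bracket identities (a computation against the explicit law of $\beta$); and finally the $\nu_V^W$-a.s.\ finiteness of $\hat G(i,j)$, which is where I expect essentially all of the work to lie.

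\textbf{Positivity and monotonicity.} On $\mathcal D_V^W$ the matrix $(H_\beta)_{V_n,V_n}$ has positive diagonal entries $2\beta_i$, nonpositive off-diagonal entries $-W\mathbf{1}\{i\sim j\}$, and is positive definite, hence it is a Stieltjes matrix (a symmetric $M$-matrix); therefore $\hat G_n=\big((H_\beta)_{V_n,V_n}\big)^{-1}$ has nonnegative entries, and since the graph on $V_n$ is connected, $\hat G_n(i,j)>0$ for all $i,j\in V_n$. Solving \eqref{defipsi} gives, for $i\in V_n$, $\psi_n(i)=\sum_{j\in V_n}\hat G_n(i,j)\,b_n(j)$, where $b_n(j):=W\,\#\{k\notin V_n:\ k\sim j\}$ is deterministic, nonnegative, and not identically $0$ (because $\partial V_n\neq\emptyset$), whence $\psi_n(i)>0$; and $\psi_n(i)=1>0$ for $i\notin V_n$. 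For monotonicity, split $(H_\beta)_{V_{n+1},V_{n+1}}$ along $V_n$ and $C:=V_{n+1}\setminus V_n$ and take the Schur complement: $\hat G_{n+1}\big|_{V_n,V_n}=\big((H_\beta)_{V_n,V_n}-S_n\big)^{-1}$ with $S_n=W^2 B\big((H_\beta)_{C,C}\big)^{-1}B^{T}$, where $B$ is the $\{0,1\}$-valued biadjacency matrix between $V_n$ and $C$. Since $\big((H_\beta)_{C,C}\big)^{-1}\ge 0$ entrywise, $S_n\ge 0$ entrywise, so $(H_\beta)_{V_n,V_n}-S_n\le (H_\beta)_{V_n,V_n}$ entrywise; the left-hand side is a Schur complement of a positive definite matrix, hence positive definite, and still has nonpositive off-diagonal entries, so it is again a Stieltjes matrix, and inverse-monotonicity of Stieltjes matrices ($M_1\le M_2\Rightarrow M_1^{-1}\ge M_2^{-1}\ge 0$) gives $\hat G_{n+1}\big|_{V_n,V_n}\ge \hat G_n$. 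Thus $(\hat G_n(i,j))_n$ is nondecreasing and its a.s.\ limit $\hat G(i,j)\in(0,+\infty]$ exists.

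\textbf{Martingale and bracket.} Take the filtration $\mathcal F_n=\sigma(\beta_i:\ i\in V_n)$, under which $\hat G_n$ and $\psi_n$ are measurable ($b_n$ being deterministic). It suffices to prove $\E[\psi_{n+1}(i)\mid\mathcal F_n]=\psi_n(i)$ and $\E\big[\psi_{n+1}(i)\psi_{n+1}(j)-\hat G_{n+1}(i,j)\mid\mathcal F_n\big]=\psi_n(i)\psi_n(j)-\hat G_n(i,j)$, and I would derive both from one computation against the explicit density of $(\beta_i)_{i\in V_{n+1}}$ recalled in subsection~\ref{margi}. That density carries the factor $\big(\det (H_\beta)_{V_{n+1},V_{n+1}}\big)^{-1/2}$, while $\hat G_{n+1}(i,j)$ and $\psi_{n+1}(i)$ are respectively the covariance and the mean of the Gaussian vector $X\sim\mathcal N\big(\hat G_{n+1}b_{n+1},\hat G_{n+1}\big)$; the $\big(\det (H_\beta)_{V_{n+1},V_{n+1}}\big)^{-1/2}$ of the density cancels exactly the normalisation of this Gaussian, and this ``supersymmetric'' cancellation makes the joint integral over $(\beta_i,X_i)_{i\in C}$ explicitly doable, reducing it to the $V_n$-level quantities. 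Equivalently, one may invoke the restriction/consistency property of the family $(\nu_U^W)_U$ — conditionally on $\mathcal F_n$, the field $\beta$ on $C$ together with an effective boundary term is again a $\beta$-potential on the \emph{finite} graph obtained from $V_{n+1}$ by contracting $V_n^c$ to a single vertex — and then quote the corresponding identities on that finite graph from \cite{sabot_tarres}, \cite{SZT}. Granting the martingale property, positivity of $\psi_n(i)$ and Doob's convergence theorem give the a.s.\ limit $\psi(i)$.

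\textbf{Finiteness of $\hat G$: the main obstacle.} By Cauchy--Schwarz for the positive semidefinite matrix $\hat G_n$ it suffices to prove $\hat G(i,i)<+\infty$ a.s., and this cannot be done by bounding the mean: via the bracket identity $\E[\hat G_n(i,i)]$ differs from $\E[\psi_n(i)^2]$ by a constant as long as both are finite, and $\E[\psi_n(i)^2]$ is unbounded in $n$ precisely in the regime — central to this paper — where $(\psi_n(i))_n$ leaves $L^2$; so $\hat G(i,i)$ may have infinite mean while being a.s.\ finite, and a pathwise argument is unavoidable. The route I would take: write the Neumann series $\hat G_n(i,i)=\frac{1}{2\beta_i}\sum_{k\ge 0}[Q_n^k]_{ii}$, where $Q_n(a,b)=\frac{W}{2\beta_a}\mathbf{1}\{a\sim b\}$ on $V_n$, pass to the increasing limit to get $\hat G(i,i)=\frac{1}{2\beta_i}\sum_{k\ge 0}[Q^k]_{ii}$ for the corresponding operator $Q$ on $V$, and show this series converges $\nu_V^W$-a.s. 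The explicit law enters here: from \eqref{defbeta} (with $U=\{a\}$) one computes $\E_{\nu_V^W}[\beta_a^{-1}]=\frac{2}{W\deg_V(a)}$, so the transfer operator is only borderline in mean, $\E_{\nu_V^W}\big[\tfrac{W\deg_V(a)}{2\beta_a}\big]=1$; but its weights fluctuate, and strict convexity gives $\E_{\nu_V^W}\big[\ln\tfrac{W}{2\beta_a}\big]<\ln\E_{\nu_V^W}\big[\tfrac{W}{2\beta_a}\big]$, so products of weights along long closed paths are typically exponentially small. Turning this into honest summability of $\sum_{k\ge 0}[Q^k]_{ii}$ is the delicate point; concretely I would aim at a uniform-in-$n$ bound $\inf_n\E_{\nu_V^W}\big[e^{-\hat G_n(i,i)}\big]>0$ (or at the control of some functional $f$ of $\hat G_n(i,i)$ with $f(+\infty)=+\infty$), which by monotone convergence already forces $\hat G(i,i)<+\infty$ a.s. On a tree this reduces to the non-degeneracy of the fixed point of the recursive distributional equation satisfied by $1/\hat G_n(o,o)$ in terms of independent Inverse Gaussian variables; on $\mathbb Z^d$ one would instead use the $1$-dependence (hence ergodicity) of $\beta$ for a stationary version of the same estimate. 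This last step is where I expect essentially all the difficulty to be concentrated.
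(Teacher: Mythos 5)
First, note that the paper does not prove this proposition at all: it is imported verbatim as Theorem 1 of \cite{sabotzeng}, so your attempt has to stand on its own. Your first block is fine: positivity of $\hat{G}_n$ and $\psi_n$, and the monotonicity of $(\hat{G}_n(i,j))_n$ via the Schur complement $\hat{G}_{n+1}\vert_{V_n,V_n}=((H_\beta)_{V_n,V_n}-S_n)^{-1}$ together with inverse-monotonicity of Stieltjes matrices, is correct (the path expansion of Lemma \ref{pathexpo} gives the same conclusion in one line, since $P^{V_n}_{i,j}\subset P^{V_{n+1}}_{i,j}$ and all terms are positive). The identity $\psi_n\vert_{V_n}=\hat G_n b_n$ and the resulting positivity are also correct.

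The genuine gap is the a.s.\ finiteness of $\hat G(i,j)$, exactly the step you yourself flag as carrying all the difficulty: what you give there is a plan, not a proof. Your own preliminary observation already shows why the route you sketch is fragile: $\E_{\nu_V^W}[W\deg(a)/(2\beta_a)]=1$ is exactly critical, so in the Neumann series $\hat G(i,i)=\frac{1}{2\beta_i}\sum_k[Q^k]_{ii}$ the typical exponential smallness of a \emph{single} path weight (Jensen) has to beat the exponential entropy of closed paths of length $k$, and no first-moment or crude union bound can do this uniformly: in the recurrent phase one has $\E_{\mu,W}[\psi_n(o)^2]\to\infty$ (Theorem \ref{moments}), hence by the bracket identity $\E[\hat G(o,o)]=+\infty$ there, while the a.s.\ finiteness must nevertheless hold for \emph{every} $W$. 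The fallback you name ($\inf_n\E[e^{-\hat G_n(i,i)}]>0$, non-degeneracy of a recursive distributional equation on trees, an ergodic stationary estimate on $\mathbb{Z}^d$) is precisely the unproven content, so the central assertion of the proposition is not established. For comparison, the argument of \cite{sabotzeng} (echoed in this paper by \eqref{cramer} and the proof of Lemma \ref{couplinglemma}) does not fight the path entropy at all: one couples the restriction $\beta_{V_n}$ with a potential on the wired graph $\tilde V_n$, for which the diagonal Green function satisfies $1/(2G'(o,o))\sim\Gamma(1/2,1)$ exactly and independently of $n$ (Theorem 3 in \cite{SZT}), and $\hat G_n(o,o)\le G'(o,o)$ (on a tree this is just $\hat G_n(o,o)=\tilde G_n(o,o)/(1+2\gamma\tilde G_n(o,o))\le 1/(2\gamma)$); the increasing sequence $\hat G_n(o,o)$ is thus stochastically dominated uniformly in $n$, so its limit is a.s.\ finite, and Cauchy--Schwarz handles the off-diagonal terms as you say. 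Secondarily, the martingale and bracket identities are only sketched (``I would derive both from one computation'', ``quote the corresponding identities''); the conditional computation you allude to is genuinely needed -- it is of the same nature as Lemma \ref{loicondipsi} and Lemma \ref{restrictionmart} -- and since the bracket property is the substantive probabilistic content of the theorem, as written your proposal only fully establishes the monotone-convergence part of the statement.
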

This martingale $(\psi_n)_{n\in\N}$ is crucial in order to study the asymptotic behaviour of the VRJP. One reason for this is that a representation of the environment of the discrete random walk associated with the VRJP starting from $i_0$ is given by $\left(W G(i_0,j)G(i_0,i)\right)_{\{i,j\}\in E}$ where for every $(i,j)\in V^2$,
$$G(i,j)=\hat{G}(i,j) +\frac{1}{2\gamma}\psi(i)\psi(j)$$
where $\gamma$ is random variable with distribution $\Gamma(1/2,1)$ which is independent of the random potential $\beta$.
We will say more about the link between the VRJP and $(\psi_n)_{n\in\N}$ in Proposition \ref{transphasechap2}. Before this, let us give some notation.
\subsection{Notation associated with the VRJP}
\subsubsection{General notation for the VRJP}
In the previous section, for every deterministic graph $(V,E)$, we introduced the measure $\nu_V^W$ associated with the $\beta$-potential. We write $\E_{\nu_V^W}$ when we integrate with respect to this measure $\nu_V^W$. Moreover, we defined a martingale $(\psi_n(o))_{n\in\N}$. For a fixed graph $V$, we say that $(\psi_n(o))_{n\in\N}$ is bounded in $L^p$ if $\underset{n\in\N}\sup\hspace{0.2 cm}\E_{\nu_{V}^W}\left[\psi_n(o)^p\right]<+\infty$. We say that $(\psi_n(o))_{n\in\N}$ is uniformly integrable if
$$\underset{K\rightarrow +\infty}\lim\hspace{0.2 cm}\underset{n\in\N}\sup\hspace{0.2 cm}\E_{\nu_{V}^W}\left[\psi_n(o)\textbf{1}\{\psi_n(o)\geq K\}\right]=0. $$
We denote by $(\tilde{Z}_n)_{n\in\N}$ the discrete time process associated with the VRJP, that is, the VRJP taken at jump times. We will see that it is a mixture of discrete random walks. Let us introduce the probability measure $\mathbf{P}_{V,W}^{VRJP}$ under which $(\tilde{Z}_n)_{n\in\N}$ is the discrete time process associated with the VRJP on a graph $V$ with constant weights $W$ starting from $o$.
\subsubsection{Notation for the VRJP on trees}\label{notatree}
If $V$ is a rooted tree, there is a natural genealogical order $\leq$ on $V$. For $u\in V$, the parent of $u$ is denoted by $\cev{u}$ and the generation of $u$ is denoted by $|u|$. If $(x,u)\in V^2$ such that $x\leq u$, then $|u|_x=|u|-|x|$.
If $V$ is a Galton-Watson tree with offspring law $\mu$, let us denote by $GW^{\mu}$ the law of $V$. Then, let us define the probability measure $\P_{\mu,W}$ under which we first choose randomly the graph $V$ with distribution $GW^{\mu}$ and then we choose randomly the potential $(\beta_i)_{i\in V}$ with distribution $\nu_V^{W}$.
Moreover, we define $\mathbf{P}_{\mu,W}^{VRJP}$ under which we first choose randomly the graph $V$ with distribution $GW^{\mu}$ and then we choose randomly a trajectory on $V$ with distribution $\mathbf{P}_{V,W}^{VRJP}$.
We write $\E_{\mu,W}\left(\cdot\right)$ and $\mathbf{E}_{\mu,W}^{VRJP}\left(\cdot\right)$ when we integrate with respect to $\P_{\mu,W}$ and $\mathbf{P}_{V,W}^{VRJP}$ respectively.

\subsection{The phase transition}
The martingale $\psi$ is very important in order to understand the recurrence or transience of the VRJP as explained by the following proposition:
\begin{propi}[\cite{sabot_tarres}, \cite{sabotzeng}, \cite{Poudevigne} and \cite{chenzeng}] \label{transphasechap2}
Let us assume that $(V,E)$ is $\mathbb{Z}^d$. Then there exists $W_c(d)>0$ depending only on $d$ such that:
\begin{itemize}
\item If $W<W_c(d)$, $\nu_d^W$-a.s, for every $i\in\mathbb{Z}^d$, $\psi(i)=0$ and the VRJP is recurrent.
\item If $W>W_c(d)$, $\nu_d^W$-a.s, for every $i\in\mathbb{Z}^d$, $\psi(i)>0$ and the VRJP is transient.
\end{itemize}
Moreover, $W_c(d)<+\infty$ if and only if $d\geq 3$. Now let us assume that $(V,E)$ is a supercritical Galton Watson tree with offspring law $\mu$ such that $\mu(0)=0$. Then there exists $W_c(\mu)\in\R_+^*$ depending only on the mean of $\mu$ such that:
\begin{itemize}
\item If $W\leq W_c(\mu)$, $\P_{\mu,W}$-a.s, for every $i\in V$, $\psi(i)=0$ and the VRJP is recurrent.
\item If $W>W_c(d)$, $\P_{\mu,W}$-a.s, for every $i\in V$, $\psi(i)>0$ and the VRJP is transient.
\end{itemize}
\end{propi}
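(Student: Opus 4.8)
The statement collects results from four papers, so the plan is organizational rather than computational: separate (i) the qualitative equivalence between vanishing of $\psi$ and recurrence, together with a zero--one law; (ii) monotonicity in $W$, which collapses ``recurrent or transient for each $W$'' into a single threshold; and (iii) the identification and finiteness of that threshold in each geometry. For (i): by Proposition~\ref{thsabotzengchap3}, $(\psi_n(i))_n$ is a nonnegative martingale, hence converges a.s.\ to $\psi(i)$, and $(\hat G_n)_n$ increases to a finite limit $\hat G$; in the mixing representation $G(i,j)=\hat G(i,j)+\frac{1}{2\gamma}\psi(i)\psi(j)$ of the environment of $(\tilde Z_n)_n$, the discrete process is recurrent exactly when the rank-one term is degenerate, i.e.\ on $\{\psi\equiv 0\}$ (Sabot--Zeng, \cite{sabotzeng}). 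On $\mathbb Z^d$ the event $\{\psi(o)>0\}$ is invariant under the ergodic translation action preserving $\nu_d^W$, hence trivial; on a Galton--Watson tree, the recursive structure inherited from \eqref{defipsi} yields a fixed-point equation for $\P_{\mu,W}(\psi(o)=0)$ whose only solutions, since $\mu(0)=0$, are $0$ and $1$. In both settings, then, for each fixed $W$ the VRJP is a.s.\ recurrent or a.s.\ transient, and on $\{\psi(o)>0\}$ one gets $\psi(i)>0$ for all $i$ simultaneously.

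For (ii) I would use the coupling of \cite{Poudevigne}: the families $(\psi_n(o))_n$ for different weights $W$ can be realised on one probability space so that $W\mapsto\psi(o)$ is stochastically nondecreasing; hence $\{W:\psi(o)>0\ \text{a.s.}\}$ is an interval unbounded on the right, and one defines $W_c(d)$ (resp.\ $W_c(\mu)$) as its left endpoint. For the $\mathbb Z^d$ part of (iii): when $d\in\{1,2\}$ the VRJP localizes for every $W$ by the results of \cite{sabotzeng}, \cite{polyloc}, \cite{kozmapeled}, so $\psi\equiv 0$ always and $W_c(d)=+\infty$; when $d\ge 3$, the analysis of the hyperbolic supersymmetric sigma model in \cite{sabot_tarres} produces transience for large $W$ (the VRJP is then a small perturbation of simple random walk, transient on $\mathbb Z^d$, $d\ge 3$) and recurrence for small $W$ (strong reinforcement forces localization); with the monotonicity of (ii) this gives $0<W_c(d)<+\infty$.

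For the tree part of (iii): by \eqref{defbeta} the potential $(\beta_i)$ on a tree has a product structure along edges and is encoded by independent Inverse Gaussian variables, so solving \eqref{defipsi} from $\partial V_n$ inward writes $\psi_n(o)$ as a sum over generation-$n$ vertices of products of i.i.d.-type edge factors along ancestral lines --- that is, as the additive martingale $W_n=\sum_{|u|=n}e^{-S_u}$ of an associated branching random walk $(S_u)$ whose one-step log-Laplace transform is expressed through $Q(W,\cdot)$. Since $(\psi_n(o))_n$ is itself a martingale, the branching random walk is read at the parameter where the usual normalising sequence is trivial, and Biggins-type theory then gives $W_n\to W_\infty$ with $\P(W_\infty>0)>0$ precisely when a derivative-of-log-Laplace inequality at that parameter is strict, equality being the boundary (critical) case in which $W_n\to 0$. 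Using the symmetry $Q(W,t)=Q(W,1-t)$, the resulting critical equation turns out to involve $\mu$ only through its mean $m=\sum_k k\mu(k)$, so solving it yields $W_c(\mu)$ as an explicit function of $m$, with the critical weight lying in the recurrent phase (whence the ``$W\le W_c(\mu)$'' in the statement). This is exactly the route of \cite{chenzeng} (a different argument is given in \cite{Basdevant}).

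I expect the delicate point to be the placement of the endpoint on trees --- proving $\psi\equiv 0$ at $W=W_c(\mu)$ rather than merely for $W<W_c(\mu)$. This is the critical case of a branching random walk, where the additive martingale at the boundary parameter tends to $0$ almost surely even though the tree survives, and it requires a spine/change-of-measure (many-to-one) analysis instead of a soft monotonicity argument; it is also why the threshold depends on $\mu$ only through its mean. The analogous sharp statement at $W_c(d)$ on $\mathbb Z^d$ is not available, consistently with the proposition asserting only strict inequalities there.
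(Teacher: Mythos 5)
This proposition is not proved in the paper at all: it is stated as background and imported wholesale from \cite{sabot_tarres}, \cite{sabotzeng}, \cite{Poudevigne} and \cite{chenzeng}, so there is no internal proof to compare your argument against. As a reconstruction of how those references establish the result, your outline is essentially accurate and correctly attributed: the equivalence ``$\psi\equiv 0$ iff recurrent'' and the $0$--$1$ law on transitive graphs are Sabot--Zeng; the collapse to a single threshold is Poudevigne's monotone coupling; recurrence for all $W$ in $d\in\{1,2\}$ and the large-$W$/small-$W$ dichotomy in $d\geq 3$ come from \cite{sabotzeng}, \cite{polyloc}, \cite{kozmapeled} and \cite{sabot_tarres}; and on trees the threshold $mQ(W,1/2)=1$ with recurrence at the critical point is Chen--Zeng/Basdevant. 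Two points of precision are worth flagging. First, $\psi_n(o)$ is not literally the additive martingale $\sum_{|u|=n}e^{-S(u)}$ of the associated branching random walk; as the paper's own later arguments show (Lemma \ref{lienG_U} and Step 4 of the proof of Theorem \ref{moments}), one only gets comparisons of the form $\psi_n(o)\leq W\hat G(o,o)\sum_{|x|=n}e^{U_x}\nu_x$, and the tree phase transition in \cite{chenzeng} is actually derived from the random-conductance representation of Proposition \ref{mixturetree} together with a Lyons-type effective-resistance criterion (which is where the minimization $\inf_t mQ(W,2t)=mQ(W,1/2)$ enters), rather than from Biggins' theorem applied to $\psi_n$ itself. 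Second, your identification of the delicate point --- recurrence exactly at $W=W_c(\mu)$, i.e.\ the weak inequality on trees versus the strict inequalities on $\mathbb{Z}^d$ --- is correct, and it is indeed handled in the cited works by critical branching random walk estimates rather than by monotonicity. With these caveats your proposal is a faithful summary of the external proofs being cited, which is all this statement requires.
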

\subsection{Statement of the results}
\subsubsection{Results on $\mathbb{Z}^d$}
For now, on $\mathbb{Z}^d$, we are not able to estimate the moments of the martingale $(\psi_n(o))_{n\in\N}$ in the transient phase. However, when $d\geq 3$, we can prove uniform integrability of this martingale in the transient phase.
\begin{thm} \label{uniforme}
We assume that $V=\mathbb{Z}^d$ with $d\geq 3$ and that $W>W_c(d)$. Then the martingale $(\psi_n(o))_{n\in\N}$ is uniformly integrable.
\end{thm}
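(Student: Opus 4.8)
The plan is to reduce the uniform integrability of $(\psi_n(o))_{n\in\N}$ to a single identity for its almost sure limit, and then to establish that identity through a restriction (Markov‑type) property of the $\beta$‑potential, exactly as one does for the partition‑function martingale of a directed polymer and, in the tree case, through the branching decomposition.

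\emph{Step 1: reduction.} With $n=0$ and $V_0=\{o\}$, equation \eqref{defipsi} gives $\psi_0(o)=\frac{2dW}{2\beta_o}=\frac{dW}{\beta_o}$, while \eqref{defbeta} with $U=\{o\}$ and $\lambda_o=\lambda$ reads $\E_{\nu_d^W}[e^{-\lambda\beta_o}]=(1+\lambda)^{-1/2}e^{-2dW(\sqrt{1+\lambda}-1)}$; integrating this in $\lambda$ over $(0,\infty)$ and substituting $u=\sqrt{1+\lambda}$ yields $\E_{\nu_d^W}[\beta_o^{-1}]=\frac1{dW}$, hence $\E_{\nu_d^W}[\psi_0(o)]=1$ and, by Proposition \ref{thsabotzengchap3}, $\E_{\nu_d^W}[\psi_n(o)]=1$ for every $n$. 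Since $(\psi_n(o))_{n\in\N}$ is a non‑negative martingale with $\psi_n(o)\to\psi(o)$ $\nu_d^W$‑a.s., Fatou's lemma gives $\E_{\nu_d^W}[\psi(o)]\le 1$, and by Scheff\'e's lemma equality holds if and only if $\psi_n(o)\to\psi(o)$ in $L^1$, i.e. if and only if the martingale is uniformly integrable. Thus Theorem \ref{uniforme} is equivalent to $\E_{\nu_d^W}[\psi(o)]=1$, and one only needs a lower bound. Recall in addition that $\psi(o)>0$ $\nu_d^W$‑a.s. in the transient phase, by Proposition \ref{transphasechap2}.

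\emph{Step 2: a restriction identity and passage to the limit.} Let $(\mathcal F_n)_{n\in\N}$ be the natural filtration $\mathcal F_n=\sigma(\beta_i:i\in V_n)$, with respect to which $\psi_n(o)$ and $\hat G_n$ are adapted. The heart of the matter is the identity
\[
\E_{\nu_d^W}\!\big[\,\psi(o)\mid\mathcal F_n\,\big]=q\,\psi_n(o)\qquad\nu_d^W\text{-a.s.},\qquad q:=\E_{\nu_d^W}[\psi(o)].
\]
Morally, $\psi(o)$ factorises into the $\mathcal F_n$‑measurable ``inner'' quantity $\psi_n(o)$ and an ``outer'' quantity built from the potential outside $V_n$ together with the boundary data on $\partial V_n$, whose conditional mean does not feel the inner configuration. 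To obtain it I would use the explicit restriction formulas for $\nu_d^W$ from \cite{sabotzeng}: the potential being $1$‑dependent, conditionally on $\mathcal F_{n+1}$ the field $(\beta_i)_{i\notin V_n}$ is again a $\beta$‑type potential on $V\setminus V_n$ with a boundary interaction governed by $(\beta_i)_{i\in\partial V_n}$, so that resumming $\psi(o)$ over exit sites of $\partial V_n$ expresses it as an $\mathcal F_n$‑measurable weight times an outer factor; translation invariance of $\nu_d^W$ on $\mathbb{Z}^d$ then forces the conditional mean of the outer factor to equal the constant $q$. Granting this, let $n\to\infty$: the left‑hand side is a uniformly integrable martingale (conditional expectations of the single integrable variable $\psi(o)$) and converges $\nu_d^W$‑a.s. to $\E_{\nu_d^W}[\psi(o)\mid\mathcal F_\infty]=\psi(o)$, while the right‑hand side converges $\nu_d^W$‑a.s. to $q\,\psi(o)$. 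Hence $(1-q)\psi(o)=0$ $\nu_d^W$‑a.s., and since $\psi(o)>0$ $\nu_d^W$‑a.s. in the transient phase, $q=1$, which by Step 1 is precisely Theorem \ref{uniforme}.

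\emph{Main obstacle.} The difficulty is entirely Step 2 on $\mathbb{Z}^d$. On a tree the analogous identity is transparent: the subtrees hanging off generation $n$ are genuinely independent of $\mathcal F_n$ and again Galton--Watson, so the outer factors are i.i.d.\ copies of mean $q$. On $\mathbb{Z}^d$ the environment outside $V_n$ is correlated with $\partial V_n$ and is not a fresh copy of $\nu^W$, and the real work is to control this boundary layer, i.e.\ to show those correlations do not shift the conditional mean of the outer factor; the natural tools are the restriction formulas of \cite{sabotzeng} and the homogeneity of $\mathbb{Z}^d$, possibly combined with Poudevigne's coupling from \cite{Poudevigne}. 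Should the exact multiplicative identity be out of reach, an asymptotic version $\E_{\nu_d^W}[\psi(o)\mid\mathcal F_n]\ge(q-\varepsilon_n)\psi_n(o)$ with $\varepsilon_n\to 0$ would already suffice for the same conclusion.
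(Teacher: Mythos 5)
Your Step 1 is fine and coincides with the paper's closing argument: since the martingale is non-negative with constant mean $1$ and converges a.s., uniform integrability is equivalent (via Scheffé) to $\E_{\nu_d^W}[\psi(o)]=1$. The problem is Step 2, which is exactly the part you leave unproven, and it is not a technical detail that can be deferred: the identity $\E_{\nu_d^W}[\psi(o)\mid\mathcal{F}_n]=q\,\psi_n(o)$ with a deterministic $q$ is a tree statement. Writing $\psi(o)=\sum_{j\in\partial V_n}\sum_{k\sim j,\,k\notin V_n}W\hat G_n(o,j)\,\psi(k)$, your identity would force $\E_{\nu_d^W}[\psi(k)\mid\mathcal{F}_n]$ to be the same constant for all boundary neighbours $k$ and all inner configurations. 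On $\mathbb{Z}^d$ there is no reason for this: $\psi(k)$ is the limit of Dirichlet problems on boxes containing $V_n$, hence depends on the inner potential itself, and by Lemma \ref{restrictionmart}~(ii) the conditional law of the outer potential given $\mathcal{F}_n$ depends on $(H_\beta)_{V_n,V_n}^{-1}$, i.e.\ on the inner configuration, so the ``outer factor'' has a genuinely random conditional mean. Translation invariance is of no help, because it only identifies unconditional laws, not conditional expectations with respect to the non-trivial $\sigma$-field $\mathcal{F}_n$. Note also that if the identity did hold it would hold with $q=1$ precisely when the theorem is true, so establishing it (even in the asymptotic form you suggest) is essentially as hard as the theorem itself; on trees it is the exact branching independence of the subtrees beyond generation $n$ that saves the argument, and this is what has no analogue on $\mathbb{Z}^d$.

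The paper closes this gap by conditioning in the orthogonal direction: instead of conditioning on the inner field, it conditions on everything except $\beta_o$. Lemma \ref{loicondipsi} shows, via the restriction formulas, that $\mathcal{L}(\psi_n(o)\mid\mathcal{G}_n)=IG\bigl(1,\psi_n(o)/\hat G_n(o,o)\bigr)$ where $\mathcal{G}_n=\sigma\bigl((\beta_i)_{i\in V_n\setminus\{o\}}\bigr)$, and Lemma \ref{condiloipsilim} passes to the limit to get $\mathcal{L}(\psi(o)\mid\mathcal{G}_\infty)=IG\bigl(1,\psi(o)/\hat G(o,o)\bigr)$; since an $IG(1,\lambda)$ law has mean $1$ whatever $\lambda>0$, this yields $\E_{\nu_d^W}[\psi(o)]=1$ immediately, and transience ($\psi(o)>0$ a.s., which you also invoke) is what guarantees the limiting Inverse Gaussian parameter is non-degenerate. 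So your reduction survives, but your route to $\E[\psi(o)]=1$ should be replaced by this single-site conditioning (or some other argument); as it stands the proposal does not prove the theorem.
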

\subsubsection{Results on Galton-Watson trees}
Let $\mu$ be a probability measure on $\N$. In this paper, we use the following hypotheses for Galton-Watson trees:
\begin{itemize}
\item Hypothesis $A_1$: $\mu(0)=0$ and $m:=\sum\limits_{k=1}^{+\infty}k\mu(k)>1$.
\item Hypothesis $A_2$: $\mu(1)=0$.
\item Hypothesis $A_3$: There exists $\delta>0$ such that $\sum\limits_{k=1}^{+\infty}k^{1+\delta}\mu(k)<+\infty$.
\end{itemize}
Our first theorem on trees states that, if $V$ is a Galton-Watson tree, $(\psi_n(o))_{n\in\N}$ is bounded in $L^p$ as soon as the VRJP is transient. 
\begin{thm} \label{bornitude}
Let $V$ be a Galton-Watson tree with offspring law $\mu$ satsifying hypothesis $A_1$. Let $W>W_c(\mu)$.
Then, for every $p\in]1,+\infty[$, the martingale $(\psi_n(o))_{n\in\N}$ is bounded in $L^p$, $GW^{\mu}$-a.s.
\end{thm}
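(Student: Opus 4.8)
The plan is to recognise $(\psi_n(o))_{n\in\N}$ as an additive martingale of a branching random walk on the tree and to run the standard $L^p$ machinery for such martingales, the hypothesis $W>W_c(\mu)$ entering precisely through the strict negativity of an associated free energy. First I would pass from the quenched statement to an annealed one: on a fixed tree $(\psi_n(o))_n$ is a non‑negative $\nu_V^W$‑martingale, so by conditional Jensen $n\mapsto\E_{\nu_V^W}[\psi_n(o)^p]$ is non‑decreasing, hence $\sup_n\E_{\nu_V^W}[\psi_n(o)^p]=\lim_n\E_{\nu_V^W}[\psi_n(o)^p]$; integrating over the tree and using monotone convergence gives $\E_{GW^\mu}\big[\sup_n\E_{\nu_V^W}[\psi_n(o)^p]\big]=\sup_n\E_{\mu,W}[\psi_n(o)^p]$, so it suffices to bound the annealed quantity, i.e.\ to work under $\P_{\mu,W}$. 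Evaluating \eqref{defipsi} at the root on a tree and using the $1$‑dependence of $\nu_V^W$ (so that the $\beta$‑fields of the subtrees $T_v$, $v\sim o$, are independent, each distributed as the appropriate $\nu^W$ up to the one‑site tilt of $\beta_v$ produced by the edge $\{o,v\}$ in \eqref{defbeta}) yields the branching identity
\[
\psi_n(o)=W\,\hat G_n(o,o)\sum_{v\sim o}\psi^{(v)}_{n-1}(v),
\qquad
\hat G_n(o,o)=\Big(2\beta_o-W^2\sum_{v\sim o}\hat G^{T_v}_{n-1}(v,v)\Big)^{-1},
\]
where $\psi^{(v)}_{n-1}$ is the subtree analogue of $\psi_{n-1}$ and $\hat G_n(o,o)\uparrow\hat G(o,o)$. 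Unfolding this writes $\psi_n(o)$ as a sum over generation‑$n$ vertices of products of one‑step multipliers along the rays $[o,u]$, i.e.\ a multiplicative cascade on the Galton–Watson tree whose increments have laws expressible through the Inverse Gaussian constants $Q(W,\cdot)$ — the tree representation of the environment of the VRJP, cf.\ \cite{sabotzeng}, \cite{chenzeng}.

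Next I would show that $W>W_c(\mu)$ forces the free energy of this cascade to be strictly negative on all of $(1,+\infty)$. Let $\Lambda(q):=\log\E_{\mu,W}\big[\sum_{v\sim o}(\text{one-generation multiplier of }v)^q\big]$ (suitably normalised so as to be convex with $\Lambda(1)=0$, the latter because $\E_{\mu,W}[\psi_n(o)]=\E_{\mu,W}[\psi_0(o)]=1$, a short computation from \eqref{defbeta}). On one hand, $W>W_c(\mu)$ means $\psi(o)>0$ $\P_{\mu,W}$‑a.s.\ (Proposition~\ref{transphasechap2}), i.e.\ the additive cascade martingale is non‑degenerate; by the Biggins–Kahane–Peyrière criterion (the required $x\log x$‑integrability being automatic from the explicit laws) the cascade is strictly subcritical, $\Lambda'(1^-)<0$, which unravels to the defining inequality of $W>W_c(\mu)$ in terms of $m$ and $Q(W,1/2)$ via the symmetry $Q(W,t)=Q(W,1-t)$ and the log‑convexity of $Q(W,\cdot)$; hence $\Lambda<0$ just to the right of $1$. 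On the other hand — and this is why only hypothesis $A_1$ is needed, with no moment bound on $\mu$ — the one‑step multipliers attached to the children of a common vertex share a bounded total mass, so for $q>1$ a vertex with $k$ children contributes at most $k^{1-q}\le 1$ times a random factor with \emph{all} moments finite (one uses the light left tail of $\beta_o$ — all its negative moments are finite since $\E[e^{-\lambda\beta_o}]$ decays like $e^{-c\sqrt\lambda}$ by \eqref{defbeta} — together with finiteness of all moments of $\hat G(o,o)$ in the transient phase). Thus $\Lambda(q)\to-\infty$ as $q\to\infty$, and by convexity $\Lambda(q)<0$ for every $q>1$.

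With $\Lambda(p)<0$ in hand, $L^p$‑boundedness follows from standard cascade estimates. For $p\in(1,2]$: conditioning on the first‑generation data $(N,(\beta_v)_{v\sim o},(\hat G^{T_v}_{n-1}(v,v))_{v\sim o})$ makes the residual randomness in the $\psi^{(v)}_{n-1}(v)$ conditionally independent; centring, applying the von Bahr–Esseen inequality to the sum over $v$, and absorbing the prefactor via $W\hat G_n(o,o)\le W\hat G(o,o)$, Hölder and the finiteness of the moments of $\hat G(o,o)$, yields a recursion $b_n\le C+\kappa_p b_{n-1}$ with $\kappa_p=e^{\Lambda(p)+o(1)}<1$ for a sequence $b_n$ comparable to $\E_{\mu,W}[\psi_n(o)^p]$, hence $\sup_n b_n<\infty$. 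For general $p>1$ one bootstraps through the dyadic intervals $(2^k,2^{k+1}]$, controlling the $p$‑th moment of the martingale increment $\psi_n(o)-\psi_{n-1}(o)$ through its bracket $\hat G_n(o,o)\le\hat G(o,o)$ and a Burkholder–Davis–Gundy / Topchii–Vatutin‑type inequality for cascades (again with contraction rate $e^{\Lambda(p)}<1$); alternatively one runs a spine change of measure under which the per‑step multiplier along the spine has strictly negative logarithmic drift (from $\Lambda'(1)<0$) and all exponential moments, giving $\E_{\mu,W}[\psi(o)^p]<\infty$ directly, whence $\sup_n\E_{\mu,W}[\psi_n(o)^p]=\E_{\mu,W}[\psi(o)^p]$ by uniform integrability.

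The main obstacle is twofold. First, the cascade recursion is not a clean smoothing transform: the prefactor $\hat G_n(o,o)$ — equivalently the full Green function $G$ and the auxiliary $\Gamma(1/2,1)$‑variable $\gamma$ entering $G$ — is correlated with the subtree martingales, so the moment recursion must be closed by conditioning on the first‑generation $\beta$‑data and, crucially, by establishing that $\hat G(o,o)$ and the one‑step multipliers have all moments throughout the transient phase; this last point is a genuine estimate relying on the Inverse Gaussian structure of the $\beta$‑potential on trees. Second, securing $L^p$‑boundedness for \emph{every} $p>1$ under only $A_1$ rests on the fact that sibling multipliers partition a bounded mass, so the offspring number enters the $q$‑th moment only through the harmless factor $k^{1-q}\le 1$; turning this into the bound $\Lambda(q)<0$ on all of $(1,\infty)$ — rather than on a finite interval, as for a generic branching‑random‑walk martingale — is the technical heart of the argument.
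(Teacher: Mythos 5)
Your proposal takes a genuinely different route from the paper (a cascade/branching--random--walk $L^p$ scheme built on the recursion $\psi_n(o)=W\hat G_n(o,o)\sum_{v\sim o}\psi^{T_v}_{n-1}(v)$, with a one--step free energy $\Lambda$), whereas the paper never introduces such a free energy: it controls $\E_{\nu_V^W}[\psi_n(o)^p]$ through its bracket via the BDG inequality (Lemma \ref{BDG}), converts moments of $\hat G_n(o,o)$ into moments of $\tilde G_n(o,o)$ through the Cramer identity $\hat G_n(o,o)=\tilde G_n(o,o)/(1+2\gamma\tilde G_n(o,o))$ (Lemma \ref{lienmoments}), identifies $\tilde G_n(o,o)$ with the effective resistance $\mathcal{R}(o\longleftrightarrow\delta_n)$ (Lemma \ref{resistance}), and then proves that the resistance to infinity has moments of \emph{every} order by an argument \`a la A\"id\'ekon based on the inequality (\ref{keyineq}), i.e.\ on the fact that a \emph{single} good child suffices to cap the resistance, combined with independence of the sibling subtrees. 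Unfortunately, your version has a genuine gap exactly at the point where the paper does its real work. The two moment inputs you need --- that $\hat G(o,o)$ (equivalently the effective resistance) has all positive moments in the transient phase, and that $\Lambda(q)<0$ for every $q>1$ --- are not established, and the sketch you give for them does not hold up. In your recursion the per--child multiplier is $W\hat G_n(o,o)$ (and $W\hat G^{T_u\cap V_n}(u,u)$ along a ray): it is the \emph{same} random variable for all siblings and it is unbounded ($\hat G(o,o)=\tilde G(o,o)/(1+2\gamma\tilde G(o,o))$ is large when $\gamma$ is small and the resistance is large), so the assertion that ``sibling multipliers share a bounded total mass'', hence a factor $k^{1-q}\le 1$ for a vertex with $k$ children, is false; and since $\E[M^q]\rightarrow+\infty$ as $q\rightarrow\infty$ for any unbounded nonnegative $M$, no per--child moment bound of this kind can give $\Lambda(q)\rightarrow-\infty$.

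The difficulty is structural, not cosmetic. In the transient phase $mQ(W,t)>mQ(W,1/2)>1$ for every $t$ (Proposition \ref{pointcritloc}), so the unnormalised multiplicative structure of the environment ($e^{U_x}$, products of Inverse Gaussian variables) has strictly \emph{positive} free energy at every exponent; the $L^p$--boundedness of $\psi_n(o)$ for all $p$ therefore cannot come from an averaged (cascade) moment computation on one--step multipliers, but only from the self--normalisation hidden in $\hat G_n(o,o)$ --- which is precisely what the paper captures through the parallel--circuit structure of the resistance recursion $\tilde{\mathcal{R}}(o)^{\alpha}\le c_{\alpha}A_u^{-\alpha}+c_{\alpha}A_u^{-2\alpha}\tilde{\mathcal{R}}(u)^{\alpha}$, valid for any chosen child $u$, and the independence of siblings making the event ``all children are bad'' very unlikely. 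You do acknowledge that finiteness of all moments of $\hat G(o,o)$ in the transient phase is ``a genuine estimate relying on the Inverse Gaussian structure'', but you defer it; since the remaining ingredients (monotonicity of $n\mapsto\E[\psi_n(o)^p]$, BDG, the quenched/annealed reduction) are routine, deferring this estimate means the proposal does not yet prove the theorem. A secondary issue: invoking the Biggins/Kahane--Peyri\`ere criterion to deduce $\Lambda'(1^-)<0$ from $\psi(o)>0$ presupposes an i.i.d.\ smoothing--transform structure which, as you yourself point out, the recursion does not have, because $\hat G_n(o,o)$ is correlated with the subtree martingales.
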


In the recurrent phase, we already know that $\psi_n(o)\xrightarrow[]{a.s}0$ on any graph as $n$ goes to infinity. Thanks to the theory of branching random walks and the representation of the VRJP with the $\beta$-potential, we are able to be much more accurate on trees. Let us introduce some notation related to branching random walks in order to give the precise asymptotics of $(\psi_n(o))_{n\in\N}$.

For every $m>1$, $W>0$, we define 
$$\begin{array}{ccccc}
f_{m,W} & : & \R & \to & \R \\
 & & t & \mapsto & \ln\left(mQ(W,t) \right). \\
\end{array}$$
Moreover, we will prove in the step 1 of the proof of Theorem \ref{moments} that there exists a unique $t^*(m,W)>0$ such that 
\begin{align}
f_{m,W}'(t^*(m,W))=\frac{f_{m,W}(t^*(m,W))}{t^*(m,W)}. \label{deftstar}
\end{align}
Then, we define $\tau(m,W)=-f_{m,W}'(t^*(m,W))$.
Thanks to these quantities, we are able to describe the asympotics of $(\psi_n(o))_{n\in\N}$ in the two following results. First, we can estimate the moments of $(\psi_n(o))_{n\in\N}$.
\begin{thm}\label{moments}
Let $V$ be a Galton-Watson tree with offspring law $\mu$ satsifying hypotheses $A_1$, $A_2$ and $A_3$. Let $W<W_c(\mu)$. Then we have the following moment estimates:
\begin{enumerate}[(i)]
\item $\forall p>0$, $\E_{\mu,W}[\psi_n(o)^{-p}]=\E_{\mu,W}\left[\psi_n(o)^{1+p} \right]=e^{np\tau(m,W)+o(n)}$.
\item $\forall p\in]1-t^*(m,W),1[$, $\E_{\mu,W}\left[\psi_n(o)^p \right]=\E_{\mu,W}\left[\psi_n(o)^{1-p}\right]\leq e^{-n(1-p)\tau(m,W)+o(n)}$
\end{enumerate}
with $\tau(m,W)>0$ and $0<t^*(m,W)<1/2$.
\end{thm}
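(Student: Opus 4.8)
\section*{Proof proposal for Theorem~\ref{moments}}

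The plan is to reduce everything to a branching random walk (BRW) attached to the tree. First I would make the $\beta$-potential representation on trees explicit: solving the finite boundary value problem \eqref{defipsi} from the vertices of $\partial V_n$ upwards yields
$$\psi_n(o)=\sum_{|u|=n}d_u\prod_{v\in[o,u]}a_v^{(n)},\qquad a_v^{(n)}=\frac{W}{\,2\beta_v-W\sum_{c:\,\cev{c}=v}a_c^{(n)}\,}=W\,\hat G^{\,T_v\cap V_n}(v,v),$$
where $d_u$ is the number of children of $u$ and $T_v$ is the subtree below $v$; by the monotonicity in Proposition~\ref{thsabotzengchap3}, $a_v^{(n)}\nearrow a_v^{(\infty)}=W\hat G^{\,T_v}(v,v)<+\infty$. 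A change of variables on the $\beta$-field (the tree analogue of the inverse-Gaussian description of the VRJP environment on $\mathbb Z$) turns the products $\prod_v a_v$ along a lineage into products of conditionally i.i.d.\ variables with law $IG(1,W)$; together with hypothesis $A_3$, which makes the contribution of the factors $d_u$ and of the gap $a^{(n)}-a^{(\infty)}$ sub-exponential, this identifies $\psi_n(o)$, for the purpose of exponential asymptotics, with a BRW functional whose branching law is $\mu$ and whose one-step structure function is $t\mapsto mQ(W,t)=e^{f_{m,W}(t)}$. Since under $A_1$ we have $d_u\ge 1$, note already that $\psi_n(o)\ge\max_{|u|=n}\prod_{v\in[o,u]}a_v^{(n)}$.

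Next I would carry out the analytic Step~1. As $IG(1,W)$ has finite real moments of every order and is not a Dirac mass, $f_{m,W}$ is real-analytic and strictly convex on $\R$, with $f_{m,W}(0)=f_{m,W}(1)=\ln m$ and $f_{m,W}(t)=f_{m,W}(1-t)$. By Proposition~\ref{transphasechap2} and the value of $W_c(\mu)$, recurrence $W<W_c(\mu)$ is equivalent to $f_{m,W}(1/2)=\ln(mQ(W,1/2))<0$. The function $g(t)=f_{m,W}(t)/t$ then tends to $+\infty$ as $t\to 0^+$ and as $t\to+\infty$, satisfies $g'(1/2)=-4f_{m,W}(1/2)>0$, and has a unique minimiser $t^*=t^*(m,W)$, which therefore lies in $(0,1/2)$ and solves $t^*f_{m,W}'(t^*)=f_{m,W}(t^*)$. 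Setting $\tau(m,W)=-f_{m,W}'(t^*)=-g(t^*)=-\min_{t>0}f_{m,W}(t)/t>0$, the key point is that this is precisely the Biggins--Hammersley formula for the (here positive) speed of the leftmost particle of the BRW with step $-\ln A_v$, $A_v\sim IG(1,W)$: $\min_{|u|=n}\bigl(-\sum_{v\in ]o,u]}\ln A_v\bigr)\sim n\tau$, equivalently $\max_{|u|=n}\prod_{v\in ]o,u]}A_v=e^{-n\tau+o(n)}$. Combined with the inequality above this is the mechanism behind the typical size $\psi_n(o)\approx e^{-n\tau}$ and behind $n^{-1}\ln\psi_n(o)\to-\tau$.

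The two displayed equalities in the theorem are the single symmetry $\E_{\mu,W}[\psi_n(o)^q]=\E_{\mu,W}[\psi_n(o)^{1-q}]$, i.e.\ $\psi_n(o)\stackrel{d}{=}1/\psi_n(o)$ under the tilted law $\psi_n(o)\,\mathrm d\P_{\mu,W}$ (licit since $(\psi_n(o))_n$ is a mean-one martingale). I would derive this by propagating through the tree recursion the corresponding self-duality of $IG(1,W)$: its size-biasing is the law of the reciprocal of an independent $IG(1,W)$ variable, which is exactly the content of $Q(W,t)=Q(W,1-t)$; on the tree, tilting by $\psi_n(o)$ distinguishes a spine, replaces the $IG$ increments along it by their reciprocals, and exhibits $\psi_n(o)$ as $1/Y$ with $Y\stackrel{d}{=}\psi_n(o)$. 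Because of this it suffices to control negative and fractional moments. The core estimate for the negative moments is a two-sided concentration of $\psi_n(o)$ around $e^{-n\tau}$: on the one hand, writing $\psi_n(o)=a_o^{(n)}\sum_{c:\,\cev c=o}\psi_{n-1}^{\,T_c}(c)$ and using that under $A_2$ every vertex has at least two children, the event that $\psi_n(o)$ is atypically small forces simultaneous smallness of at least two independent sub-copies, which iterates into $\P(\psi_n(o)<e^{-n(\tau+\varepsilon)})\le e^{-c\,2^{\lfloor\varepsilon'n\rfloor}}$ (the moment bound $A_3$ keeping $\E[\kappa^{\#\text{children}}]<\infty$); on the other hand, from the same recursion and a spinal/first-moment estimate one gets that $\psi_n(o)$ exceeds $e^{-n(\tau-\varepsilon)}$ only on exponentially small events, with deviation rate large enough (this is exactly where the threshold $t^*$, i.e.\ the slope of that rate function at the typical value $e^{-n\tau}$, enters) to give $\E_{\mu,W}[\psi_n(o)^s]\le e^{-ns\tau+o(n)}$ for $0<s<t^*$ — which is part~(ii) after putting $s=1-p$. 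For the upper bound on negative moments, $\E_{\mu,W}[\psi_n(o)^{-p}]\le e^{np\tau+o(n)}$, one integrates the super-exponential lower-tail bound; by the symmetry this also gives $\E_{\mu,W}[\psi_n(o)^{1+p}]\le e^{np\tau+o(n)}$. The matching lower bounds are then soft: Markov applied to part~(ii) gives $\psi_n(o)\le e^{-n(\tau-\varepsilon)}$ with high probability, whence $\E_{\mu,W}[\psi_n(o)^{-p}]\ge e^{np\tau-o(n)}$, and again the symmetry transfers this to $\E_{\mu,W}[\psi_n(o)^{1+p}]$.

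The main obstacle, and where I expect to spend most of the work, is making the BRW translation quantitatively exact while the $a_v^{(n)}$ are neither independent along a lineage nor equal to the ideal $a_v^{(\infty)}$: one must verify that the change of variables genuinely produces i.i.d.\ $IG(1,W)$ increments (so that the rate comes out as $\tau=-\min_t f_{m,W}(t)/t$ and not some nearby quantity) and that the truncation error $a^{(n)}-a^{(\infty)}$, the leaf factors $d_u$ and the random offspring sizes all cost only $e^{o(n)}$. Closely tied to this is the super-exponential lower-tail estimate for $\psi_n(o)$, which is what forces the rate in part~(i) to be $p\tau$ \emph{uniformly in} $p>0$ — a merely exponential lower tail would destroy uniformity — and which is precisely where $A_2$ (genuine branching at every vertex) and $A_3$ (offspring tail) are indispensable. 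By comparison, the analytic Step~1 and the $IG$-duality are routine, and the two-point/many-to-one moment computations underlying the spinal estimates are standard BRW technology, needing care only near the endpoint $p=1-t^*$.
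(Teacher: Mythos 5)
Your overall skeleton (the Step-1 analysis of $f_{m,W}$, $t^*\in(0,1/2)$, $\tau>0$; the observation that both displayed equalities reduce to the single symmetry $\E_{\mu,W}[\psi_n(o)^q]=\E_{\mu,W}[\psi_n(o)^{1-q}]$; the first-moment bound at exponent $t^*$ for part (ii); and deducing the lower bound in (i) from (ii) by Markov's inequality) is coherent, and parts of it are genuinely different from the paper, which instead obtains both bounds in (i) by passing through the bracket: BDG (Lemma \ref{BDG}), the identity $\hat G_n(o,o)=\tilde G_n(o,o)/(1+2\gamma\tilde G_n(o,o))$ (Lemma \ref{lienmoments}), the resistance formula $\tilde G_n(o,o)=\mathcal{R}(o\leftrightarrow\delta_n)$ (Lemma \ref{resistance}), Nash--Williams, and the Hu--Shi estimates. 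However, the two pillars your plan actually rests on are asserted rather than proved, and as sketched they do not go through. First, the symmetry: your proposed spinal/size-biasing derivation (``tilting by $\psi_n(o)$ distinguishes a spine, replaces the $IG$ increments along it by their reciprocals, and exhibits $\psi_n(o)$ as $1/Y$'') is not substantiated anywhere, and it is far from routine since $\psi_n(o)$ is a sum over the whole boundary, not a product along a ray. The mechanism that actually makes the identity a one-line computation is that, conditionally on $\mathcal{G}_n=\sigma(\beta_i,\,i\in V_n\setminus\{o\})$, $\psi_n(o)$ is Inverse Gaussian with $\mathcal{G}_n$-measurable parameter (Lemmas \ref{loicondipsi} and \ref{momentspsi}); this fact is never identified in your proposal, and without it (or a worked-out spine argument) the reduction of (i) to negative/fractional moments is incomplete.

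Second, and more seriously, the super-exponential lower tail $\P_{\mu,W}(\psi_n(o)<e^{-n(\tau+\varepsilon)})\le e^{-c2^{\lfloor\varepsilon'n\rfloor}}$ is the whole content of the upper bound in (i) for large $p$, and the one-line justification ``smallness forces simultaneous smallness of at least two independent sub-copies, which iterates'' does not deliver it. In the recursion $\psi_n(o)=a_o^{(n)}\sum_{\cev c=o}(\cdots)$ the prefactor $a_o^{(n)}=W\hat G_n(o,o)$ is \emph{not} independent of the subtree variables, and the event that a local prefactor is small is only exponentially (not super-exponentially) rare; an iteration of the schematic form $u_n\le\epsilon_0+u_{n-1}^2$ with such error terms does not produce a doubly exponential bound. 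To make this work one needs exactly the structure the paper uses for the equivalent event on the resistance side: cut at generation $\lfloor\delta n\rfloor$, use $A_2$ to get $2^{\lfloor\delta n\rfloor}$ independent subtrees, and, crucially, a \emph{uniform quantitative} one-subtree estimate (the FHS small-deviation inequality, giving per-subtree failure probability at most $1-e^{-\eta n^{1/3}}$) together with a separate Chernoff bound for the first $\delta n$ generations — see (\ref{formulerecu7})--(\ref{formulerecu9}). No such uniform input appears in your proposal, and without it the claimed tail bound, hence $\E_{\mu,W}[\psi_n(o)^{-p}]\le e^{np\tau+o(n)}$ uniformly in $p$, is unproven. (A smaller glossed point of the same kind: to integrate the tail bound into a negative-moment bound you also need an a priori control of $\P(\psi_n(o)<u)$ as $u\to0$ for fixed $n$, e.g.\ again via the conditional Inverse-Gaussian law, since a bound at the single scale $u=e^{-n(\tau+\varepsilon)}$ does not by itself control $\E_{\mu,W}[\psi_n(o)^{-p}]$.) The remaining ingredients you call routine are indeed close to the paper's Step 1 and Step 4, but the gaps above are exactly where the work of the theorem lies.
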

\begin{rem}
In Theorem \ref{moments}, remark that we can not estimate all the moments of $(\psi_n(o))_{n\in\N}$. This is due to the non-integrability of high moments of some quantities related to branching random walks. We will be more precise in Proposition \ref{petitsmomentsbranch}.
\end{rem}
The previous theorem gives good estimates of the moments of $(\psi_n(o))_{n\in\N}$. Moreover, it is also possible to give the exact almost sure decreasing rate of $(\psi_n(o))_{n\in\N}$ if $W<W_c(\mu)$.
\begin{thm}\label{rate}
Let $V$ be a Galton-Watson tree with offspring law $\mu$ satsifying hypotheses $A_1$ and $A_3$. Let $W<W_c(\mu)$. Then, it holds that, $\P_{\mu,W}$-a.s,
$$\underset{n\rightarrow+\infty}\lim\frac{\ln(\psi_n(o))}{n}=-\tau(m,W)$$
with $\tau(m,W)>0$.
\end{thm}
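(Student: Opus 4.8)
The plan is to deduce the almost sure decay rate from a first‑ and second‑moment analysis combined with a Borel–Cantelli argument, using the tree structure and the $1$‑dependence of the $\beta$‑potential. The starting point is the martingale property: since $(\psi_n(o))_{n\in\N}$ is a positive martingale with bracket $(\hat G_n(o,o))_{n\in\N}$ (Proposition \ref{thsabotzengchap3}), one has $\E_{\mu,W}[\psi_n(o)]=1$ and $\E_{\mu,W}[\psi_n(o)^2]=1+\E_{\mu,W}[\hat G_n(o,o)]$. On a tree, $\psi_n(o)$ factorises over the tree: writing $\psi_n(o)=\sum_{|u|=n}\prod_{o<v\le u}A_v$ where the $A_v$ are, conditionally on the tree, independent $IG$‑type ratios built from the $\beta$‑potential along the ray to $u$ (this is the explicit tree representation of the environment used in \cite{chenzeng}), we recognise $\psi_n(o)$ as (a multiplicative functional of) a branching random walk: $\ln\psi_n(o)$ relates to $\max$/sum quantities for the BRW whose step distribution has log‑Laplace transform $f_{m,W}(t)=\ln(mQ(W,t))$.

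First I would set up the change of measure. For the additive martingale $W_n(t):=\sum_{|u|=n}e^{-t S_u}/ (mQ(W,t))^{n}$ of the associated BRW — here $S_u=-\sum_{o<v\le u}\ln A_v$ — one has $\psi_n(o)=\sum_{|u|=n}e^{-S_u}$, so $\psi_n(o)= (mQ(W,1))^n\,W_n(1)=W_n(1)$ because $mQ(W,1)=mQ(W,0)\le 1$ with equality exactly characterising... actually at $W<W_c(\mu)$ the relevant normalisation is governed by $t^*=t^*(m,W)\in(0,1/2)$ and $\tau=-f_{m,W}'(t^*)$ defined through \eqref{deftstar}. The key analytic input, to be established in Step 1 of Theorem \ref{moments}, is that $t^*$ exists, is unique, lies in $(0,1/2)$, and that the line $t\mapsto t\tau$ is tangent to $-f_{m,W}$ at $t^*$; consequently for all $t$, $-f_{m,W}(t)\le t\tau$, i.e. $mQ(W,t)\ge e^{-t\tau}$, with equality only at $t=t^*$. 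The upper bound $\limsup_n n^{-1}\ln\psi_n(o)\le-\tau$ then follows from a one‑sided large‑deviation / Markov estimate: for $p\in(0,1)$ with $1-p>1-t^*$ one has from Theorem \ref{moments}(ii) that $\E_{\mu,W}[\psi_n(o)^{1-p}]\le e^{-n(1-p)\tau+o(n)}$, and a Borel–Cantelli argument along $n$ (together with monotone control between consecutive $n$, which is routine since offspring are bounded in expectation by $A_3$) gives $\limsup_n n^{-1}\ln\psi_n(o)\le-\tau$ a.s.

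The matching lower bound $\liminf_n n^{-1}\ln\psi_n(o)\ge-\tau$ is the main obstacle, since $\psi_n(o)\to 0$ and a naive second‑moment / Paley–Zygmund argument on $\psi_n(o)$ itself fails (the dominant contribution to $\psi_n(o)$ comes from atypically low‑lying particles). The standard BRW remedy is a truncated second moment: restrict the sum to vertices $u$ whose ancestral path $S_{\cdot}$ stays in a suitable tube (a barrier of slope close to $\tau/t^*$, the speed of the leftmost particles seen under the spine measure tilted by $t^*$), define $\psi_n^{\mathrm{trunc}}(o)=\sum_{|u|=n,\,S_v\ge v\cdot(\tau/t^*)-C\ \forall v\le u}e^{-S_u}$, and show $\E[\psi_n^{\mathrm{trunc}}(o)]\ge e^{-n\tau+o(n)}$ while $\E[(\psi_n^{\mathrm{trunc}}(o))^2]\le e^{-2n\tau+o(n)}$ using the $1$‑dependence of $\beta$ to retain approximate branching independence and the many‑to‑two formula. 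Paley–Zygmund then gives $\P_{\mu,W}(\psi_n(o)\ge e^{-n\tau - \varepsilon n})\ge c>0$ infinitely often; upgrading "infinitely often with positive probability" to "a.s. eventually" uses a $0$–$1$ law argument — either the tail triviality coming from the branching structure, or restarting the construction along a growing sequence of disjoint subtrees rooted at generation $k_j\to\infty$ (whose $\beta$‑potentials are independent by $1$‑dependence once the roots are at graph distance $\ge2$) and applying Borel–Cantelli in the second, independent, direction. I expect that controlling the truncated second moment — in particular verifying the barrier estimates under the genuine (not i.i.d.) law of the increments $-\ln A_v$, which are only $1$‑dependent and come from a Galton–Watson tree with merely a $(1+\delta)$ moment on the offspring — will require the bulk of the technical work, and that hypothesis $A_3$ enters precisely to make the many‑to‑one/many‑to‑two estimates and the concentration of generation sizes go through.
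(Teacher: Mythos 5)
The central step of your plan rests on an identification that is not true: $\psi_n(o)$ is \emph{not} equal to $\sum_{|u|=n}\prod_{o<v\le u}A_v=\sum_{|u|=n}e^{-S_u}$. By definition $\psi_n=\hat G_n\hat\eta^{(n)}$, i.e. $\psi_n(o)=W\sum_{|x|=n}\hat G_n(o,x)\nu_x$, and the only comparison with the branching random walk functional that one can get cheaply is one-sided: using $\hat G_n(o,x)\le \hat G(o,x)$ and Lemma \ref{lienG_U} one obtains $\psi_n(o)\le W\hat G(o,o)\sum_{|x|=n}e^{U_x}\nu_x$ (this is exactly how the paper proves the upper moment bound), but there is no matching lower bound of $\psi_n(o)$ by the additive BRW sum, and none is supplied in your proposal. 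Consequently your entire lower-bound strategy (truncated second moment with a barrier, many-to-two, Paley--Zygmund, then a $0$--$1$ law) is carried out on the wrong object; even if all the BRW barrier estimates were done, the conclusion $\liminf n^{-1}\ln\psi_n(o)\ge-\tau$ would not follow without a new argument linking $\psi_n(o)$ from below to the BRW quantity. This is precisely the difficulty the paper circumvents by a completely different device: Lemma \ref{couplinglemma} gives the exact distributional identity $\psi_n(o)^2\cdot 2\gamma(1+2\gamma\mathcal R(o\longleftrightarrow\delta_n))\overset{law}=2\Gamma(1/2,1)$, so after a Borel--Cantelli control of the $\Gamma(1/2,1)$ factor the theorem reduces to the a.s. asymptotics $\mathcal R(o\longleftrightarrow\delta_n)=e^{2\tau(m,W)n+o(n)}$, which is obtained pathwise from Nash--Williams (lower bound) and the conductance estimate \eqref{formulerecu5} (upper bound), the subexponential corrections being controlled by Hu--Shi (Proposition \ref{hushi}) and the $n^{1/3}$ result of \cite{FHS}. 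No second-moment or Paley--Zygmund argument is needed, and in particular both directions of the limit come from the same resistance identity.

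Two further points. First, your upper bound invokes Theorem \ref{moments}(ii), which is stated under the extra hypothesis $A_2$ ($\mu(1)=0$) that Theorem \ref{rate} does not assume, so as written your argument proves a weaker statement; also the exponent you quote is off (for $q\in(0,t^*)$ the bound reads $\E_{\mu,W}[\psi_n(o)^q]\le e^{-nq\tau+o(n)}$, and it is with these small moments that Markov plus Borel--Cantelli yields $\limsup n^{-1}\ln\psi_n(o)\le-\tau$; with the exponent as you wrote it the Chernoff computation does not give the sharp constant). Second, even taken on its own terms the lower-bound part is only a program: the barrier/second-moment estimates, the Paley--Zygmund step and the upgrade from ``infinitely often with positive probability'' to almost sure are all deferred, and the remark that the increments are ``only $1$-dependent'' is moot since, in the tree construction of subsection \ref{constructionsec}, the $A_u$ are genuinely i.i.d. conditionally on the tree. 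So the proposal, as it stands, does not constitute a proof.
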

The following proposition gives an estimate of the behaviour of the decreasing rate $\tau(m,W)$ near the critical point $W_c(\mu)$.
\begin{prop}\label{criticalp}
Let $V$ be a Galton-Watson tree with offspring law $\mu$ satsifying hypothesis $A_1$.
In the neighborhood of the critical point $W_c(\mu)$, $$\tau(m,W)\underset{\tiny{W\rightarrow W_c(\mu)}}\sim\alpha(m)(W_c(\mu)-W)$$ where $\displaystyle\alpha(m)=2+\frac{1}{W_c(\mu)}-2m\frac{K_1(W_c(\mu))}{K_{1/2}(W_c(\mu))}>0$ where $K_{\alpha}$ is the modified Bessel function of the second kind with index $\alpha$.
\end{prop}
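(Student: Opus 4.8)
The plan is to express $\tau(m,W)$ purely in terms of the function $f_{m,W}$ and its derivatives, and then to perform an asymptotic analysis as $W\uparrow W_c(\mu)$. The key observation is that at the critical point $W_c(\mu)$, the recurrence/transience transition means precisely that the straight line through the origin with slope $f_{m,W}'(t^*)$ becomes tangent to the graph of $f_{m,W}$ \emph{at the point $t=1$}; equivalently, $t^*(m,W_c(\mu))=1$ and $\tau(m,W_c(\mu))=0$. (This matches the fact that $W_c(\mu)$ depends only on $m$, and that $f_{m,W}(1)=\ln(mQ(W,1))=\ln m$ since $Q(W,1)=\E[A]=1$ for $A\sim IG(1,W)$.) So near criticality $t^*(m,W)$ is close to $1$ and $\tau(m,W)$ is a small positive quantity, and I want its first-order Taylor expansion in $(W_c(\mu)-W)$.

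First I would set up the characterization of $t^*$ and $\tau$. Writing $t^*=t^*(m,W)$, equation \eqref{deftstar} says $t^* f_{m,W}'(t^*)=f_{m,W}(t^*)$, and $\tau(m,W)=-f_{m,W}'(t^*)$. Using the symmetry $Q(W,t)=Q(W,1-t)$, the function $t\mapsto Q(W,t)$ — hence $f_{m,W}$ up to the additive constant $\ln m$ — is symmetric about $t=1/2$ and convex, so $f_{m,W}'(1/2)=0$ and $f_{m,W}(1)=f_{m,W}(0)=\ln m$. The condition $f_{m,W}(1)=0$ (line through origin passes through $(1,f_{m,W}(1))$ with the tangency forcing $t^*=1$) would hold exactly when $\ln m = 0$, which is false; the correct statement is that criticality is the borderline case $f_{m,W}(t)\le t f_{m,W}'(t)$ first failing, i.e. the value $\min_{t\in(0,1]} \frac{f_{m,W}(t)}{t}$ (attained at $t^*$) crosses $0$. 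So I would define $\phi_W(t)=f_{m,W}(t)/t$ for $t>0$, note $t^*$ is its minimizer, and $-\tau(m,W)=\phi_W(t^*)=\min_t \phi_W(t)$. At $W=W_c(\mu)$ this minimum equals $0$, and $W_c(\mu)$ is characterized by $\min_t f_{m,W_c}(t)/t = 0$, which (since the min is $0$) is the same as $\min_t f_{m,W_c}(t)=0$, giving $t^*(m,W_c)$ as the minimizer $t_c$ of $f_{m,W_c}$ itself. Here I must be careful: if $t_c<1$ this is an interior critical point with $f_{m,W_c}(t_c)=0$ and $f_{m,W_c}'(t_c)=0$; I would verify using the explicit $Q(W,t)=\E[A^t]$ (expressible via modified Bessel functions $K_\alpha$, which is where the $K_1, K_{1/2}$ come from) that in fact the relevant configuration makes the computation go through, and extract $t_c$.

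Next comes the perturbation. I would apply the envelope theorem / implicit function theorem to $\tau(m,W)=-\min_t \phi_W(t)$: since the minimum is attained at an interior point $t^*$ with $\partial_t\phi_W(t^*)=0$,
\[
-\frac{d}{dW}\tau(m,W)=\frac{d}{dW}\Big[\min_t\phi_W(t)\Big]=\partial_W\phi_W(t)\big|_{t=t^*}=\frac{\partial_W f_{m,W}(t^*)}{t^*}.
\]
Then $\partial_W f_{m,W}(t)=\partial_W \ln Q(W,t)=\frac{\partial_W Q(W,t)}{Q(W,t)}$, and at $W=W_c(\mu)$, $t=t_c$ we evaluate this. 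Since $Q(W,t)=\E[A^t]$ with $A\sim IG(1,W)$, the derivative $\partial_W Q$ can be computed from the density \eqref{densityig}, or more cleanly from the Bessel-function representation $Q(W,t)= \text{(explicit)}\cdot K_{t-1/2}(W)/K_{1/2}(W)$ type formula; differentiating in $W$ and using standard Bessel identities ($K_\alpha' = -\tfrac12(K_{\alpha-1}+K_{\alpha+1})$, recurrence relations) produces exactly the combination $2+\frac{1}{W_c(\mu)}-2m\frac{K_1(W_c(\mu))}{K_{1/2}(W_c(\mu))}$. Finally, $\tau(m,W_c(\mu))=0$ plus this derivative computation gives $\tau(m,W)\sim \alpha(m)(W_c(\mu)-W)$ by a first-order Taylor expansion, and positivity of $\alpha(m)$ follows because $\tau(m,W)>0$ for $W<W_c(\mu)$ (known from Theorem \ref{rate}), which forces the one-sided derivative to be $\le 0$, i.e. $\alpha(m)\ge 0$; strict positivity then comes from the explicit formula together with strict monotonicity of the transition.

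The main obstacle I anticipate is the bookkeeping at the critical point: precisely identifying $t^*(m,W_c(\mu))$ (equivalently $t_c$) and justifying that it is an interior minimizer where the envelope theorem applies, then carrying the Bessel-function differentiation cleanly enough to land on the stated closed form. Establishing $Q(W,t)$ in terms of $K_\alpha$ and differentiating with respect to the \emph{parameter} $W$ (as opposed to the order) requires care; the simplest route is probably to go back to the density \eqref{densityig}, write $Q(W,t)=\int_0^\infty x^t (\tfrac{W}{2\pi x^3})^{1/2}\exp(-\tfrac{W(x-1)^2}{2x})\,dx$, differentiate under the integral in $W$, and recognize the resulting integrals as further $Q(W,\cdot)$ values or Bessel integrals. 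Verifying $\alpha(m)>0$ by hand may also need a short lemma on the ratio $K_1/K_{1/2}$ evaluated at $W_c(\mu)$, using the explicit value of $W_c(\mu)$ as a function of $m$.
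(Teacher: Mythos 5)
Your route is essentially the paper's. The paper characterizes $t^*$ by $G(W,t)=F(W,t)-t\partial_tF(W,t)=0$ with $F(W,t)=\ln(mQ(W,t))$, gets smoothness of $W\mapsto t^*(m,W)$ from strict convexity plus the implicit function theorem, observes that $\partial_tF(W_c(\mu),1/2)=0$ (symmetry of $Q(W,\cdot)$ about $1/2$) and $F(W_c(\mu),1/2)=\ln(mQ(W_c(\mu),1/2))=0$, hence $t^*(m,W_c(\mu))=1/2$, and then Taylor-expands $F(W,t^*(m,W))$; the vanishing of the $\partial_t$-term at $(W_c(\mu),1/2)$ is exactly your envelope-theorem step, and $\tau=-F(W,t^*)/t^*\sim-2F(W,t^*)$ produces $\alpha(m)=2\partial_WF(W_c(\mu),1/2)$, computed by differentiating the integral representation of $Q(W,1/2)$, using $Q(W,3/2)=Q(W,-1/2)=K_1(W)/K_{1/2}(W)$ and $mQ(W_c(\mu),1/2)=1$. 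One caveat: your opening claim that the tangency occurs at $t=1$, i.e. $t^*(m,W_c(\mu))=1$, is wrong, and although you partially correct it you never actually pin down $t_c$; the factor $2$ in $\alpha(m)$ is precisely $1/t^*(m,W_c(\mu))=2$, so the identification $t_c=1/2$ (immediate from the symmetry you invoke) must be made explicit. That is reparable and I would not count it as the gap.

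The genuine gap is the strict positivity $\alpha(m)>0$, which is part of the statement and is what makes the equivalence $\tau(m,W)\sim\alpha(m)(W_c(\mu)-W)$ meaningful. Your soft argument ($\tau>0$ for $W<W_c(\mu)$ and $\tau(m,W_c(\mu))=0$) only yields $\alpha(m)\ge 0$, and "strict positivity from the explicit formula together with strict monotonicity of the transition" is not a proof: a strictly decreasing nonnegative function can have vanishing derivative at its zero, and strict monotonicity of $W\mapsto Q(W,1/2)$ does not force $\partial_WQ(W_c(\mu),1/2)>0$. The paper handles this analytically: writing also $Q(W,1/2)=K_0(W)/K_{1/2}(W)$, positivity of $\alpha(m)$ reduces, for all $W>0$ and not just at $W_c(\mu)$, to the Bessel-ratio inequality $1+\frac{1}{2W}>\frac{K_1(W)}{K_0(W)}$, for which it cites Corollary 3.3 of \cite{yangchu}. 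This is a nontrivial input, not the "short lemma using the explicit value of $W_c(\mu)$ as a function of $m$" you anticipate — there is no closed-form expression for $W_c(\mu)$ (it is defined only implicitly by $mQ(W_c(\mu),1/2)=1$), and no special feature of $W_c(\mu)$ beyond that identity enters the positivity proof. As written, your proposal does not establish $\alpha(m)>0$.
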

Following basically the same lines as in the proofs of the previous estimates on $(\psi_n(o))_{n\in\N}$, we deduce information on the asympotic behaviour of the VRJP when $W<W_c(\mu)$. More precisely, we can estimate the probability for the VRJP to touch the generation $n$ before coming back to the root $o$ when $W<W_c(\mu)$. Remind that $(\tilde{Z}_k)_{k\in\N}$ is the discrete-time process associated with the VRJP on the rooted tree $V$ starting from $o$. We define $\tau_o^+=\inf\{k\in\N^*, \tilde{Z}_k=o\}$ and for every $n\in\N^*$, we define $\tau_n=\inf\{k\in\N^*,|\tilde{Z}_k|=n\}$. Recall that the probability measure $\mathbf{P}_{\mu,W}^{VRJP}$ is defined in the paragraph \ref{notatree}.
\begin{prop}\label{estimatevrjp}
Let $V$ be a Galton-Watson tree with offspring law $\mu$ satsifying hypotheses $A_1$, $A_2$ and $A_3$. Let $W<W_c(\mu)$. Then we have the following estimate:
$$-2\tau(m,W)\leq \underset{n\rightarrow+\infty}\liminf\hspace{0.1 cm}\frac{\ln\left( \mathbf{P}_{\mu,W}^{VRJP}(\tau_o^+>\tau_n) \right)}{n}$$
and
$$ \underset{n\rightarrow+\infty}\limsup\hspace{0.1 cm}\frac{\ln\left( \mathbf{P}_{\mu,W}^{VRJP}(\tau_o^+>\tau_n )\right)}{n} \leq -\tau(m,W)\times t^*(m,W)$$
where $0<t^*(m,W)<1/2$.
\end{prop}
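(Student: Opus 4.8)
\textbf{Proof proposal for Proposition \ref{estimatevrjp}.}

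The plan is to connect the quenched probability $\mathbf{P}_{V,W}^{VRJP}(\tau_o^+>\tau_n)$ to the martingale $(\psi_n(o))_{n\in\N}$ and then invoke the moment estimates of Theorem \ref{moments}. Recall that under $\mathbf{P}_{V,W}^{VRJP}$ the discrete-time process $(\tilde Z_k)_{k\in\N}$ is a mixture of Markov chains whose conductances on an edge $\{i,j\}$ are proportional to $WG(i_0,i)G(i_0,j)$ with $G(i,j)=\hat G(i,j)+\frac{1}{2\gamma}\psi(i)\psi(j)$. On a tree, such a reversible walk is an electrical network, so the escape probability from $o$ to generation $n$ before returning to $o$ is, by the standard Dirichlet/effective-conductance formula, a ratio of effective conductances built out of the $G(o,\cdot)$'s along the tree. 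The first step is therefore to write $\mathbf{P}_{V,W}^{VRJP}(\tau_o^+>\tau_n)$ explicitly in terms of the environment and to identify the leading behaviour: I expect that, up to multiplicative factors that are subexponential in $n$ in the recurrent regime, this quantity is comparable to $\psi_n(o)$ (for the upper bound) and to $\psi_n(o)^2$ (for the lower bound), the square reflecting that an escape requires the ``forward flux'' to survive, which is governed by $\hat G_n(o,o)$ and hence by the bracket of the martingale; one should be able to extract this from the identities already recorded in Proposition \ref{thsabotzengchap3} relating $\psi_n$, $\hat G_n$ and the restricted Green's function.

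Once the comparison $c\,\psi_n(o)^2 \le \mathbf{P}_{V,W}^{VRJP}(\tau_o^+>\tau_n)$ (quenched, with a possibly random but $n$-independent constant) is in place, taking logarithms, dividing by $n$, letting $n\to\infty$ and using the a.s.\ rate $n^{-1}\ln\psi_n(o)\to-\tau(m,W)$ from Theorem \ref{rate} would give the lower bound $\liminf_n n^{-1}\ln \mathbf{P}_{\mu,W}^{VRJP}(\tau_o^+>\tau_n)\ge -2\tau(m,W)$ after integrating over the tree and the environment (one must check that passing from the quenched to the annealed statement only helps, e.g.\ by Fatou or by a second-moment/Paley--Zygmund argument using item (i) of Theorem \ref{moments}). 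For the upper bound, the natural route is a first-moment estimate: $\mathbf{P}_{\mu,W}^{VRJP}(\tau_o^+>\tau_n)\le C\,\E_{\mu,W}[\psi_n(o)^{p}]$ for a suitable $p\in(1-t^*(m,W),1)$, chosen by optimizing the exponent; item (ii) of Theorem \ref{moments} then yields $\E_{\mu,W}[\psi_n(o)^p]\le e^{-n(1-p)\tau(m,W)+o(n)}$, and taking $p\to 1-t^*(m,W)$ makes $(1-p)\tau(m,W)\to t^*(m,W)\tau(m,W)$, giving exactly the claimed $\limsup \le -\tau(m,W)t^*(m,W)$. The restriction to $p>1-t^*(m,W)$ is precisely what forces the non-optimal constant $t^*(m,W)$ in the upper bound, as in the remark after Theorem \ref{moments}.

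The main obstacle, I expect, is the first step: making rigorous the electrical-network identity that ties $\mathbf{P}_{V,W}^{VRJP}(\tau_o^+>\tau_n)$ to powers of $\psi_n(o)$ with only subexponential error, uniformly enough in the random tree to survive the $GW^\mu$-average. One has to handle the random conductances $WG(o,i)G(o,j)$, which involve both $\hat G$ and $\psi$, and argue that the ``deep'' parts of the tree (beyond generation $n$) contribute negligibly to the effective conductance on the log scale; hypotheses $A_2$ and $A_3$ are presumably what is needed to control the fluctuations of the branching-random-walk quantities entering these Green's functions, exactly as in the proof of Theorem \ref{moments}. Once that structural comparison is secured, the rest is a direct transcription of the already-established moment and a.s.\ asymptotics for $(\psi_n(o))_{n\in\N}$.
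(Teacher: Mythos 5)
Your lower bound can be made to work, although it is not the paper's route: quenched, $P_{c,o}(\tau_o^+>\tau_n)=\bigl(W\sum_{\cev{i}=o}A_i\,\mathcal{R}(o\longleftrightarrow\delta_n)\bigr)^{-1}$, and Lemma \ref{couplinglemma} together with the Borel--Cantelli control of $\Phi_n=\psi_n(o)^2\,2\gamma(1+2\gamma\mathcal{R}(o\longleftrightarrow\delta_n))$ (as in the proof of Theorem \ref{rate}) indeed gives $P_{c,o}(\tau_o^+>\tau_n)\geq \psi_n(o)^2$ up to a.s.\ polynomial factors; combined with Theorem \ref{rate} and the elementary quenched-to-annealed step ($\P(P_{c,o}\geq e^{-(2\tau+\varepsilon)n})\to 1$ forces $\E[P_{c,o}]\geq \tfrac12 e^{-(2\tau+\varepsilon)n}$ eventually) this yields the liminf bound. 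The paper instead gets it in two lines from Jensen's inequality, the resistance formula $1/P_{c,o}(\tau_o^+>\tau_n)=W(\sum_{\cev{i}=o}A_i)\tilde G_n(o,o)$ (Lemma \ref{resistance}), Cauchy--Schwarz and the moment bound (\ref{estimationtildeg}); but your version is acceptable once written out.

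The upper bound, however, has a genuine gap: the key inequality $\mathbf{P}_{\mu,W}^{VRJP}(\tau_o^+>\tau_n)\leq C\,\E_{\mu,W}[\psi_n(o)^{p}]$ for $p$ close to $1-t^*(m,W)$ is asserted but never established, and it is not a routine step. Your guiding heuristic is also off: quenched, the escape probability is comparable (up to subexponential factors) to $\psi_n(o)^2$ in both directions, not to $\psi_n(o)$ for the upper bound; the weaker annealed exponent $-\tau t^*$ does not come from a first-power comparison but from an integrability obstruction. Concretely, the only pointwise link between the escape probability and $\psi_n(o)$ is Lemma \ref{couplinglemma}, which gives $\mathcal{C}(o\longleftrightarrow\delta_n)=4\gamma^2\psi_n(o)^2/\bigl(\Phi_n-2\gamma\psi_n(o)^2\bigr)$ with $\Phi_n\overset{law}{=}2\Gamma(1/2,1)$ correlated with $\psi_n(o)$; to dominate the annealed escape probability by moments of $\psi_n(o)$ you must control negative moments of this $\Gamma(1/2,1)$-type factor (finite only below order $1/2$) jointly with $\psi_n(o)$, and a H\"older split then degrades the exponent strictly below $\tau t^*$. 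Moreover Theorem \ref{moments}(ii) is itself only an upper bound, so nothing can be reverse-engineered from it. The paper's proof bypasses $\psi_n(o)$ entirely: it writes $P_{c,o}(\tau_o^+>\tau_n)=\mathcal{C}(o\longleftrightarrow\delta_n)/(W\sum_{\cev{i}=o}A_i)$, uses $P\leq P^{\alpha}$ for $\alpha<t^*(m,W)/2$, the cutset bound $\mathcal{C}(o\longleftrightarrow\delta_n)\leq We^{-2\tau(m,W)n}\max_{|x|=n}A_x^{-1}\,\mathcal{W}_{n,2/t^*(m,W)}$, and Hu--Shi's small-moment estimate (Proposition \ref{petitsmomentsbranch}), which applies only to moments of order $<t^*(m,W)/2$; letting $\alpha\uparrow t^*(m,W)/2$ produces exactly $-2\cdot(t^*/2)\cdot\tau=-\tau(m,W)t^*(m,W)$. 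So to complete your argument you would either have to prove your comparison inequality with $\E_{\mu,W}[\psi_n(o)^p]$ (which I do not see how to do without essentially redoing this conductance estimate) or work directly with small annealed moments of the effective conductance as the paper does.
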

\begin{rem}
We suspect that the real decreasing rate in the proposition above is $-2\tau(m,W)$. Indeed, we only have a problem of integrability of some functionals related to branching random walks. Up to this technical detail, the upper bound in Proposition \ref{estimatevrjp} would be $-2\tau(m,W)$ too.
\end{rem}
Now, let us look at the behaviour of the martingale $(\psi_n(o))_{n\in\N}$ at the critical point $W_c(\mu)$.
\begin{thm}\label{cvpsipointcrit}
Let $V$ be a Galton-Watson tree with offspring law $\mu$ satsifying hypothesis $A_1$ and $A_3$. We assume that $W=W_c(\mu)$. Then, under $\P_{\mu,W}$,
$$\frac{\ln(\psi_n(o))}{n^{1/3}}\xrightarrow[n\rightarrow+\infty]{a.s}-\rho_c $$
where $\rho_c=\frac{1}{2}\left(\frac{3\pi^2\sigma^2}{2}\right)^{1/3}$ with $\displaystyle\sigma^2=16m\int_0^{+\infty}\frac{\sqrt{W_c(\mu)}\ln(x)^2}{\sqrt{2\pi}x}e^{-\frac{W_c(\mu)}{2}(x+1/x-2)}dx$.
\end{thm}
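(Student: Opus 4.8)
The plan is to exploit the representation of $\psi_n(o)$ on trees coming from the $\beta$-potential together with the theory of branching random walks at the boundary case, following the same philosophy as in the proofs of Theorems \ref{moments} and \ref{rate}, but now \emph{at} the critical point $W=W_c(\mu)$. Recall that on a tree the environment of the VRJP is given by independent Inverse Gaussian weights on the edges, so that $\psi_n(o)$ can be written (up to bounded multiplicative corrections) as a sum over the tree of products of i.i.d. weights along rays, i.e. as the partition function $\sum_{|u|=n}e^{-V(u)}$ of a branching random walk $(V(u))_{|u|=n}$ whose increment log-Laplace transform is exactly $f_{m,W}(t)=\ln(mQ(W,t))$. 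First I would make this reduction precise: show that $\ln\psi_n(o)$ differs from $\ln\left(\sum_{|u|=n}e^{-V(u)}\right)$ by a quantity that is $o(n^{1/3})$ almost surely, so that it suffices to study the BRW partition function. The key algebraic fact is that the choice $W=W_c(\mu)$ is precisely the point where the optimizer $t^*(m,W)$ reaches $1/2$ and the BRW becomes \emph{critical} in the sense that $f_{m,W_c}'(1)$ (or the relevant linear tilt) makes the whole generation-$n$ population cluster near the minimal displacement; this is where the $n^{1/3}$ scale enters.

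The core of the argument is then the Brunet--Derrida / Aïdékon--Shi type behaviour of the derivative martingale and of $\min_{|u|=n}V(u)$ at criticality, but with the twist that here we need the partition function $\sum_{|u|=n}e^{-V(u)}$ rather than the minimum. At criticality, $\sum_{|u|=n}e^{-V(u)}$ decays like $e^{-\rho_c n^{1/3}+o(n^{1/3})}$ because of the combined effect of (a) the minimal displacement being at distance $\Theta(n^{1/3})$ inside the "bad" region in an appropriately tilted walk, and (b) a small-probability event that the walk stays below a curved barrier of height $\Theta(n^{1/3})$, with the $n^{1/3}$ exponent and the constant coming from the ground-state energy of the Airy operator / the classical computation $\rho_c=\tfrac12(3\pi^2\sigma^2/2)^{1/3}$ for a Brownian motion killed at $0$ and conditioned to stay in an interval of slowly growing length; the constant $\sigma^2$ is the variance of the centered log-weight, namely $\sigma^2 = f_{m,W_c}''$ evaluated at the critical tilt, which after substituting the Inverse Gaussian density \eqref{densityig} and $Q(W,t)=\E[A^t]$ yields exactly the stated integral $16m\int_0^{+\infty}\frac{\sqrt{W_c(\mu)}\ln(x)^2}{\sqrt{2\pi}x}e^{-\frac{W_c(\mu)}{2}(x+1/x-2)}dx$. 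The strategy is: upper bound by a first-moment (many-to-one) computation over trajectories confined below a barrier $n^{1/3}g(k/n)$ and optimizing the profile $g$ (this gives the $\le -\rho_c$ direction, up to $o(n^{1/3})$); lower bound by a second-moment or truncated-second-moment argument restricted to the same barrier event, using hypothesis $A_3$ to control the contribution of high-multiplicity vertices and hypothesis $A_1$ (in particular $\mu(0)=0$) to guarantee survival and enough particles. Hypothesis $A_2$ is, I expect, not needed here, consistently with its absence from the statement.

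The main obstacle will be the lower bound at the correct constant: getting $n^{-1/3}\ln\psi_n(o)\ge -\rho_c-o(1)$ almost surely (not just in probability) requires showing that with overwhelming probability there is at least one ray of the tree whose BRW path stays below the optimal curved barrier of height $\rho_c n^{1/3}$, and that such rays are not destroyed by the reinforcement correction relating $\psi_n(o)$ to the pure BRW partition function. This is delicate because the event has probability decaying like $e^{-cn^{1/3}}$, so one cannot simply use Borel--Cantelli along all $n$; the standard remedy is to work along a subsequence $n_j$ with $n_{j+1}-n_j$ growing slowly, prove the estimate on that subsequence by a second-moment argument on a well-chosen truncation (removing particles that go too low, à la Aïdékon--Shi), then interpolate between consecutive $n_j$ using monotonicity-type estimates and the fact that $|\ln\psi_{n+1}(o)-\ln\psi_n(o)|$ cannot be too large. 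Matching the Airy constant precisely — i.e. identifying $\lim n^{-1/3}\ln \P(\text{BM stays in }[0,\ell_n]\text{ for time }n)$ with $-\tfrac{\pi^2}{2\sigma^2}\cdot(\text{length})^{-2}$ integrated/optimized over the profile, giving the factor $(3\pi^2\sigma^2/2)^{1/3}/2$ — is a calculus of variations computation that I would isolate as a lemma and expect to be the most technical but least conceptually novel part.
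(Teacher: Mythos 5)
There is a genuine gap, and it sits in your very first reduction. You propose to show that $\ln\psi_n(o)$ differs from $\ln\bigl(\sum_{|u|=n}e^{-V(u)}\bigr)$ by $o(n^{1/3})$ and then to extract the $e^{-\rho_c n^{1/3}}$ decay from the partition function of the BRW with log-Laplace transform $f_{m,W}$. This cannot work: at $W=W_c(\mu)$ one has $mQ(W_c,1/2)=1$ and $t^*=1/2$, $\tau=0$, so the BRW with steps $-\ln A_u$ is exactly in the boundary case and $\sum_{|u|=n}e^{-V(u)}$ is the quantity $\mathcal{W}_{n,2}$ of Proposition \ref{hushi}; by Hu--Shi its logarithm is of order $\ln n$ almost surely (limsup $-\ln n$, liminf $-3\ln n$), i.e.\ only polynomially small. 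If your reduction held, the limit in the theorem would be $0$, contradicting $\rho_c>0$; so in fact $\ln\psi_n(o)$ and the log of the partition function differ by $\Theta(n^{1/3})$. The same confusion appears in your heuristic for where $n^{1/3}$ comes from: in the boundary case the minimal displacement at generation $n$ is of order $\ln n$, not $n^{1/3}$. The $n^{1/3}$ scale arises from a \emph{barrier} problem along rays: what governs $\psi_n(o)$ is the effective conductance of the VRJP environment, and the conductance of a ray is throttled by $\max_{o<u\leq x}S(u)$, so the relevant object is the consistent-minimal-displacement quantity $\min_{|x|=n}\max_{o<u\leq x}\tilde{S}(u)\sim c\,n^{1/3}$ (Jaffuel, Faraud--Hu--Shi), not the generation-$n$ partition function.

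The paper's proof makes this precise with two ingredients you are missing. First, Lemma \ref{couplinglemma} gives the equality in law $\psi_n(o)^2\cdot 2\gamma\,(1+2\gamma\,\mathcal{R}(o\longleftrightarrow\delta_n))\overset{law}{=}2\Gamma(1/2,1)$, and the Borel--Cantelli argument from the proof of Theorem \ref{rate} upgrades this to $2\ln\psi_n(o)=\ln\mathcal{C}(o\longleftrightarrow\delta_n)+O(\ln n)$ almost surely, so the problem is reduced to the effective conductance, not to the partition function. Second, after sandwiching the conductances $WA_x^{-1}\prod_{o<u\leq x}A_u^2$ between $\min_{|z|\leq n}A_z^{-1}$ and $\max_{|z|\leq n}A_z^{-1}$ (polynomial corrections) times the conductance $\varrho_n$ of the pure BRW environment $\prod_{o<u\leq x}A_u^2$, the almost sure asymptotics $\ln\varrho_n\sim -\bigl(\tfrac{3\pi^2\sigma^2}{2}\bigr)^{1/3}n^{1/3}$ are quoted from the proof of Theorem 1.2 in \cite{FHS}; this is exactly the barrier/Airy analysis you sketch, but applied to the right object. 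If you want to keep your barrier computation, it must be run for the conductance (equivalently for $\min_{|x|=n}\max_{u\leq x}$ of the walk), with the second-moment and interpolation issues you raise handled there. A minor further point: $\sigma^2$ is not $f_{m,W_c}''$ at the critical tilt but $16$ times it, i.e.\ $\sigma^2=16m\,\E[A^{1/2}\ln(A)^2]$, the factor coming from the rescalings relating $S$, the conductance exponent $2$, and the normalization in \cite{FHS}; this bookkeeping is secondary to the main gap above.
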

\begin{rem}
At the critical point, we are not able to have precise $L^p$ bounds for $\psi_n(o)$. Indeed, in the subcritical phase, we have subexponential bounds for some functionals associated with branching random walks. At the critical point, we would need to be more accurate.
\end{rem}
The recurrence of the VRJP on trees at the critical point $W_c(\mu)$ was already known. The following theorem states that the VRJP on trees is even positive recurrent at the critical point. This result is of a different kind than the previous ones. However, the proof requires the same tools as before.
\begin{thm}\label{pointcrit}
Let $V$ be a Galton-Watson tree with offspring law $\mu$ satsifying hypothesis $A_1$ and $A_3$. We assume that $W=W_c(\mu)$.
Then, the discrete-time VRJP $(\tilde{Z}_n)_{n\in\N}$  associated with $(Z_t)_{t\geq 0}$ is a mixture of positive recurrent Markov chains.
\end{thm}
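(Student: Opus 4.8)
The plan is to reduce Theorem~\ref{pointcrit} to the almost-sure finiteness of the total mass of the reversible measure of the random conductance network that governs the environment of the discrete VRJP, and then to read that finiteness off from a branching-random-walk estimate at criticality. Conditionally on the potential $\beta$ (and on $\gamma$), the process $(\tilde Z_n)_{n\in\N}$ is the nearest-neighbour reversible Markov chain on $V$ with conductances $c_{\{i,j\}}=W\,G(o,i)\,G(o,j)$, $\{i,j\}\in E$, whose reversible measure is $i\mapsto c(i):=\sum_{j\sim i}c_{\{i,j\}}$. As $V$ is connected and the conductances are a.s.\ positive, this chain is irreducible, hence positive recurrent as soon as $\sum_{i\in V}c(i)<+\infty$. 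At $W=W_c(\mu)$, Proposition~\ref{transphasechap2} gives $\psi\equiv 0$, so that $G=\hat G$; therefore it suffices to prove that, $\P_{\mu,W_c}$-almost surely (equivalently, for $\P_{\mu,W_c}$-almost every environment, which is the relevant statement since $\sum_i c(i)=2W\sum_{\{i,j\}\in E}\hat G(o,i)\hat G(o,j)$ here),
\[
\sum_{\{i,j\}\in E}\hat G(o,i)\,\hat G(o,j)<+\infty .
\]
(Alternatively, testing the identity $H_\beta\hat G(o,\cdot)=\delta_o$, obtained by letting $n\to\infty$ in $(H_\beta)_{V_n,V_n}\hat G_n(\cdot,o)=\delta_o$, against $\hat G(o,\cdot)$ rewrites the left-hand side as $\tfrac{1}{2W}\bigl(\sum_{i\in V}2\beta_i\hat G(o,i)^2-\hat G(o,o)\bigr)$; since $\hat G(o,o)<+\infty$ by Proposition~\ref{thsabotzengchap3} and $\beta_i>0$ on the support of $\nu^W_V$, the target is also equivalent to $\sum_{i\in V}\beta_i\hat G(o,i)^2<+\infty$ a.s.)

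I would then exploit the tree structure. On $V$ one has the product representation $\hat G(o,v)=\hat G(o,o)\prod_{o<u\le v}A_u$, where $A_u:=\hat G(o,u)/\hat G(o,\cev u)=W\check G_u$ and $\check G_u$ is the $(u,u)$-entry of the inverse of $H_\beta$ restricted to the subtree rooted at $u$, satisfying $\check G_u^{-1}=2\beta_u-W^2\sum_{w:\cev w=u}\check G_w$. By the tree representation of the VRJP environment via Inverse Gaussian variables (Sabot--Tarrès, Chen--Zeng), $S_v:=-\sum_{o<u\le v}\ln A_u$ is a one-dependent branching random walk whose reproduction law is governed by $\E_{\mu,W}\bigl[\sum_{v:\cev v=o}e^{-tS_v}\bigr]=mQ(W,t)=e^{f_{m,W}(t)}$, so that its critical parameters are precisely the quantities $t^*(m,W)$, $\tau(m,W)$ used throughout the paper. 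In these terms $\sum_{\{i,j\}\in E}\hat G(o,i)\hat G(o,j)=\hat G(o,o)^2\sum_{v\ne o}e^{-2S_v}/A_v$, so the task becomes $\sum_{v\ne o}e^{-2S_v}/A_v<+\infty$ a.s. At $W=W_c(\mu)$ the branching walk $S$ is at criticality: $\inf_{t>0}f_{m,W_c}(t)/t=0$, attained at $t^*(m,W_c)=\tfrac12$ (equivalently $\tau(m,W_c)=0$). Hence $S$ has zero speed, its minimal displacement at generation $n$ is $\tfrac{3}{2t^*}\ln n+O_{\P}(1)=3\ln n+O_{\P}(1)$ (Bramson, Aïdékon), and the number of particles within $O(1)$ of that minimum is tight in $n$ — exactly the branching-random-walk inputs already assembled for Theorem~\ref{cvpsipointcrit}. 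Since the inverse temperature $2$ is strictly larger than $t^*(m,W_c)=\tfrac12$, the partition function $\sum_{|v|=n}e^{-2S_v}$ sits strictly in the frozen (low-temperature) phase and is dominated by the extremal particles; combining the logarithmic correction for the minimum with the tightness of the near-minimal population yields, $\P_{\mu,W_c}$-a.s.\ and for all large $n$,
\[
\sum_{|v|=n}e^{-2S_v}\le C_\omega\,n^{-a}\qquad\text{for some }a>1 ,
\]
and summing over $n$ gives the claim; the single-edge factors $1/A_v$ and the offspring numbers $\deg^+(v)$ are absorbed through a Borel--Cantelli argument, which is where hypothesis $A_3$ is used, and the one-dependence of the $A_u$'s along rays is handled exactly as in the rest of the paper.

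The main obstacle is this last step: converting the behaviour of a \emph{critical} branching random walk into an almost-sure \emph{summable} bound for its partition function at an inverse temperature strictly inside the frozen phase. A naive first moment is useless here, since $\E_{\mu,W_c}\bigl[\sum_{|v|=n}e^{-2S_v}\bigr]=e^{n f_{m,W_c}(2)}\to+\infty$; instead one must combine the frozen-phase structure (domination by the leftmost particles), the Bramson logarithmic correction for the minimal displacement, and a truncated second-moment computation together with the many-to-one and many-to-two formulas — that is, the very machinery the paper develops for the subcritical moment estimates and for the critical-point Theorem~\ref{cvpsipointcrit}. A secondary, more routine point, again controlled via $A_3$, is to deal with the unbounded multiplicative corrections ($1/A_v$, the offspring sizes $\deg^+(v)$, and the factor $\beta_i$ in the equivalent form $\sum_i\beta_i\hat G(o,i)^2$) and with the one-dependence coming from the $\beta$-field.
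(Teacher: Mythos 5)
Your reduction is the right one and matches the paper's: by Proposition \ref{mixturetree} (equivalently, through Lemma \ref{lienG_U} and $\psi\equiv 0$ at $W_c$ from Proposition \ref{transphasechap2}, your $\hat G$-formulation), positive recurrence follows once the total conductance $\Lambda:=\sum_{v\neq o}c(v,\cev{v})=W\sum_{v\neq o}e^{-2S_v}A_v^{-1}$ is $\P_{\mu,W_c}$-a.s.\ finite, and the paper indeed treats $-\ln$ of the conductances as a critical branching random walk in the frozen phase ($\beta=4$ for $\tilde S=S/2$). But you misplace the main obstacle: the ``summable a.s.\ bound for the partition function'' you worry about is not something to be re-derived from Bramson's correction and tightness of the near-minimal population (tightness in probability would not even give an eventual a.s.\ bound); it is exactly the $\limsup$ half of Proposition \ref{hushi}, already quoted in the paper, which gives $\mathcal{W}_{n,4}=\sum_{|v|=n}e^{-2S_v}\leq n^{-2+o(1)}$ a.s.\ for all large $n$. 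The paper itself does not even use this pathwise bound: it proves summability of the fractional moments $\E_{\mu,W}[\Lambda_n^r]$, $\Lambda_n=\sum_{|v|=n}c(v,\cev{v})$, for $r\in(2/9,1/4)$, using Hu--Shi's moment estimate $\E[\mathcal{W}_{n,4}^r]=n^{-6r+o(1)}$ (Proposition \ref{petitsmomentsbranch}).

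The genuine gap is the step you declare routine: absorbing the factors $A_v^{-1}$ and the offspring numbers ``through a Borel--Cantelli argument''. A pathwise Borel--Cantelli bound on the single-edge factors gives at best $\max_{|v|=n}A_v^{-1}=O(n)$ (the left tail of $IG(1,W)$ is of order $e^{-W/(2x)}$ and there are $m^{n+o(n)}$ particles, so this order is sharp), and multiplying by the optimal a.s.\ bound $\mathcal{W}_{n,4}\leq n^{-2+o(1)}$ — which is attained along a subsequence by \eqref{limsup}--\eqref{liminf} — only yields $\Lambda_n\leq n^{-1+o(1)}$, which is not summable; so the crude $\max$-times-partition-function bound fails. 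Worse, under $A_3$ alone $\mu$ has only a $(1+\delta)$-moment, so $\max_{|y|=n}\nu_y$ grows exponentially and no pathwise control of the offspring factor exists at all. What is actually needed is (i) the split on $\{A_v\leq 1\}$, writing $e^{-2S_v}A_v^{-1}=e^{-2S_{\cev{v}}}A_v\leq e^{-2S_{\cev{v}}}$ there, which removes $1/A_v$ but produces $\sum_{|y|=n-1}\nu_y e^{-4\tilde S_y}$, and then (ii) an averaging argument for the heavy-tailed $\nu_y$ rather than a max bound: the paper does this with Jensen at power $1/4$, the unit-mean martingale $\mathcal{W}_{n-1}$, a truncation at $\nu_y\geq n^{3/2}$, and the choice $r\in(2/9,1/4)$; a quenched variant of your plan could be rescued by truncating $\nu_y$ at $n^{1-\varepsilon}$ and controlling the excess via conditional Markov, hypothesis $A_3$ and a conditional Borel--Cantelli lemma, but that computation is precisely what your proposal omits, and without it the argument as written does not go through.
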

\section{Background}
\subsection{Marginals and conditional laws of the $\beta$-potential} \label{margi}
The law $\nu_{V}^{W}$ introduced in section \ref{introductionchap3} was originally defined on finite graphs in \cite{SZT} with general weights. More precisely, on a finite set $S$, we can define a $\beta$-potential with some law $\tilde{\nu}_S^{P,\eta}$ for every $(\eta_i)_{i\in S}\in\R_+^S$ and every $P=(W_{i,j})_{i,j\in S^2}\in \R_+^{S^2}$. One can remark that the weights in the matrix $P$ are not assumed to be constants anymore. Moreover we allow loops, that is, $W_{i,i}$ can be non-zero for every $i\in S$. The term $\eta$ is a boundary term which represents the weights of some edges relating $S$ to some virtual vertices which are out of $S$. The probability measure $\tilde{\nu}_S^{P,\eta}$ is defined in the following way: by Lemma 4 in \cite{sabotzeng} the function
\begin{align}\label{formule_densitechap3}
\beta\mapsto\textbf{1}\{H_{\beta}^{(S)}>0\}\left(\frac{2}{\pi} \right)^{|S|/2}e^{-\frac{1}{2}\langle 1,H_{\beta}^{(S)} 1\rangle -\frac{1}{2}\langle \eta ,(H_{\beta}^{(S)})^{-1}\eta\rangle+\langle \eta,1\rangle}\frac{1}{\sqrt{\det H_{\beta}^{(S)}}}
\end{align}
is a density. $H_{\beta}^{(S)}$ is a matrix on $S\times S$ defined by $$H_{\beta}^{(S)}(i,j)=2\beta_i \textbf{1}\{i=j\}-W_{i,j}\textbf{1}\{i\sim j\}$$ and $1$ stands for the vector $(1,\cdots, 1)$ in $\R^S$ in the expression (\ref{formule_densitechap3}). Then, we can define a probability measure with the density (\ref{formule_densitechap3}) and we denote it by $\tilde{\nu}_S^{P,\eta}(d\beta)$.  Besides, the Laplace transform of $\tilde{\nu}_S^{P,\eta}$ can be computed and it is very similar to the Laplace transform of $\nu_V^W$. Indeed, for any $\lambda\in\R_+^S$,
$$\int e^{-\langle \lambda,\beta\rangle}\tilde{\nu}_S^{P,\eta}(d\beta)=e^{-\langle \eta,\sqrt{\lambda+1}-1\rangle-\frac{1}{2}\sum_{i\sim j}W_{i,j}\left(\sqrt{(1+\lambda_i)(1+\lambda_j)}-1\right)}\prod\limits_{i\in S}(1+\lambda_i)^{-1/2}$$
where $\sqrt{1+\lambda}$ is the vector $(\sqrt{1+\lambda_i})_{i\in S}$. Further, the family of distributions of the form $\tilde{\nu}_S^{P,\eta}$ have a very useful behaviour regarding its marginals and conditional laws. Indeed, marginals and conditional laws are still of the form $\tilde{\nu}_S^{P,\eta}$.
The following lemma gives a formula for the law of the marginals and the conditional laws:
\begin{lemi}[Lemma 5 in \cite{sabotzeng}]\label{restrictionmart}
Let $S$ be a finite set. Let $U\subset S$ be a subset of $S$. Let $(\eta_i)_{i\in S}\in \R_+^S$ and $P=(W_{i,j})_{i,j\in S^2}\in\R_+^{S^2}$. Under $\tilde{\nu}_S^{P,\eta}$,
\begin{enumerate}[(i)]
\item $\beta_U$ has law $\tilde{\nu}_U^{P_{U,U},\hat{\eta}}$, where for every $i\in U$, $\hat{\eta}_i=\eta_i+\sum\limits_{j\in U^c} W_{i,j}$.
\item Conditionally on $\beta_U$, $\beta_{U^c}$ has distribution $\tilde{\nu}_{U^c}^{\check{P},\check{\eta}}$ where $\check{P}$ and $\check{\eta}$ are defined in the following way: For every $(i,j)\in {U^c}\times U^c$,
$$\check{P}(i,j)=\check{W}_{i,j}=W_{i,j}+\sum\limits_{k\sim i, k\in U}\sum\limits_{l\sim j, l\in U} W_{i,k}W_{j,l}(H_{\beta})_{U,U}^{-1}(k,l).$$
For every $i\in U^c$,
$$\check{\eta}_i=\eta_i+\sum\limits_{k\sim i, k\in U}\sum\limits_{l\in U} W_{i,k}(H_{\beta})_{U,U}^{-1}(k,l)\eta_l . $$
\end{enumerate}
\end{lemi}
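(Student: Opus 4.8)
Sketch of the proof.

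The plan is to prove (i) and (ii) simultaneously by writing the joint density \eqref{formule_densitechap3} of $\tilde{\nu}_S^{P,\eta}$ as an explicit product: the $\tilde{\nu}_U^{P_{U,U},\hat{\eta}}$-density in the variable $\beta_U$ times the $\tilde{\nu}_{U^c}^{\check{P},\check{\eta}}$-density in the variable $\beta_{U^c}$ (with $\check{P},\check{\eta}$ depending on $\beta_U$). Fix $\beta\in\R^S$, write $1_U$, $1_{U^c}$ for the all-ones vectors on $U$, $U^c$, and decompose $H:=H_\beta^{(S)}$ along the partition $\{U,U^c\}$ as $H=\begin{pmatrix}A&B\\B^T&C\end{pmatrix}$, where $A=(H_\beta)_{U,U}$, $C$ is the $H$-matrix on $U^c$ built from $\beta_{U^c}$ and the weights $P_{U^c,U^c}$, and $B(i,j)=-W_{i,j}\textbf{1}\{i\sim j\}$ for $i\in U$, $j\in U^c$, so that $B$ does not depend on $\beta$ (loop weights being subtracted from the diagonal of $H$). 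A direct computation identifies $\hat{\eta}:=\eta_U-B\,1_{U^c}$ with the vector $(\eta_i+\sum_{j\in U^c}W_{i,j})_{i\in U}$ of the statement, identifies $\check{\eta}:=\eta_{U^c}-B^TA^{-1}\eta_U$ with the $\check{\eta}$ of the statement, and identifies the Schur complement $\check{H}:=C-B^TA^{-1}B$ with the $H$-matrix on $U^c$ associated with the weights $\check{P}$; the diagonal term $\sum_{k,l}W_{i,k}W_{i,l}(H_\beta)_{U,U}^{-1}(k,l)$ here becomes the loop weight $\check{W}_{i,i}$, which is why loops must be allowed. Since $A$ is a Stieltjes matrix on the support of $\tilde{\nu}_S^{P,\eta}$, its inverse is entrywise nonnegative, so $\check{P}\in\R_+^{U^c\times U^c}$ and $\check{\eta}\in\R_+^{U^c}$ and $\tilde{\nu}_{U^c}^{\check{P},\check{\eta}}$ is well defined.

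The proof then rests on three classical linear-algebra facts, valid whenever $A>0$: the positivity equivalence $H>0\iff(A>0\text{ and }\check{H}>0)$, giving $\textbf{1}\{H>0\}=\textbf{1}\{A>0\}\,\textbf{1}\{\check{H}>0\}$; the Schur determinant identity $\det H=\det A\cdot\det\check{H}$, which together with $(2/\pi)^{|S|/2}=(2/\pi)^{|U|/2}(2/\pi)^{|U^c|/2}$ matches the normalizing factors; and the exponent identity
\begin{align*}
&-\tfrac12\langle 1,H1\rangle-\tfrac12\langle\eta,H^{-1}\eta\rangle+\langle\eta,1\rangle\\
&\qquad=\Bigl(-\tfrac12\langle 1_U,A1_U\rangle-\tfrac12\langle\hat{\eta},A^{-1}\hat{\eta}\rangle+\langle\hat{\eta},1_U\rangle\Bigr)+\Bigl(-\tfrac12\langle 1_{U^c},\check{H}1_{U^c}\rangle-\tfrac12\langle\check{\eta},\check{H}^{-1}\check{\eta}\rangle+\langle\check{\eta},1_{U^c}\rangle\Bigr).
\end{align*}
To prove this last identity I would first record the block-inverse consequence $\langle\eta,H^{-1}\eta\rangle=\langle\eta_U,A^{-1}\eta_U\rangle+\langle\check{\eta},\check{H}^{-1}\check{\eta}\rangle$ (expand the standard block inverse of $H$ and use $\check{\eta}=\eta_{U^c}-B^TA^{-1}\eta_U$), then expand $\langle 1,H1\rangle$ and $\langle\hat{\eta},1_U\rangle+\langle\check{\eta},1_{U^c}\rangle$ blockwise, then complete the square in $\langle\hat{\eta},A^{-1}\hat{\eta}\rangle$; all cross-terms in $B$ cancel and the equality drops out.

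Combining the three facts gives, pointwise on $\{A>0\}$ (both sides vanishing off it since $\{H>0\}\subset\{A>0\}$), the desired density factorization. Part (i) follows by integrating this identity over $\beta_{U^c}\in\R^{U^c}$: by Lemma 4 in \cite{sabotzeng} the $\tilde{\nu}_{U^c}^{\check{P},\check{\eta}}$-factor integrates to $1$ for each fixed $\beta_U$ with $A>0$, so the marginal density of $\beta_U$ is the $\tilde{\nu}_U^{P_{U,U},\hat{\eta}}$-density. Part (ii) is then immediate, since the factorization exhibits the $\tilde{\nu}_{U^c}^{\check{P},\check{\eta}}$-density as a version of the conditional density of $\beta_{U^c}$ given $\beta_U$. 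As a cross-check, (i) can also be obtained quickly from the Laplace transform of $\tilde{\nu}_S^{P,\eta}$ by restricting to $\lambda\in\R_+^S$ with $\lambda_{U^c}=0$: the $U^c$-contributions vanish, the boundary weights $W_{i,j}$ ($i\in U$, $j\in U^c$) get absorbed into $\eta_i$, and one recognizes the Laplace transform of $\tilde{\nu}_U^{P_{U,U},\hat{\eta}}$, which is analytic near the origin and thus characterizes the law.

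The main obstacle is the exponent identity: it is elementary but error-prone, as the term $\langle\eta,H^{-1}\eta\rangle$ forces one through the block inverse and requires bookkeeping the boundary corrections $\hat{\eta},\check{\eta}$ and the loop weights of $\check{P}$ exactly; once that algebra is arranged, the rest is matching indicators, determinants and constants.
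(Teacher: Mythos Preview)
Your approach is correct and is essentially the standard proof of this result; indeed the paper does not prove the lemma at all but simply quotes it from \cite{sabotzeng}, where the argument runs along the same Schur-complement lines you describe. The three ingredients you isolate (positivity equivalence for block matrices, the Schur determinant formula, and the additive splitting of the exponent via the block inverse and the identifications $\hat{\eta}=\eta_U-B1_{U^c}$, $\check{\eta}=\eta_{U^c}-B^TA^{-1}\eta_U$) are exactly what is needed, and your observation that the diagonal Schur terms produce the loop weights $\check{W}_{i,i}$ is the right explanation for why loops must be allowed in the definition of $\tilde{\nu}$.
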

In \cite{sabotzeng}, the infinite potential $\nu_V^W$ is defined thanks to a sequence of potentials of the form $\tilde{\nu}_{V_n}^{P,\eta}$ on the exhausting sequence $(V_n)_{n\in\N}$ which is shown to be compatible. More, precisely, the restrictions of $\nu_V^W$ are given by the following lemma:
\begin{lemi}\label{restrictioninfini}
Let $n\in\N^*$. Let $(\beta_i)_{i\in V}$ be a random potential following $\nu_V^W$. Then $(\beta_i)_{i\in V_n}$ is distributed as $\tilde{\nu}_{V_n}^{\hat{P}^{(n)},\hat{\eta}^{(n)}}$ where
\begin{itemize}
\item For every $i,j\in V_n$, $\hat{P}^{(n)}(i,j)=W\textbf{1}\{i\sim j\}$.
\item For every $i\in V_n$, $\hat{\eta}^{(n)}_i=\sum\limits_{j\sim i, j\notin V_n} W$.
\end{itemize}
\end{lemi}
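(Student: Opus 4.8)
The plan is to identify the two measures by computing and comparing their Laplace transforms. Both $\tilde{\nu}_{V_n}^{\hat{P}^{(n)},\hat{\eta}^{(n)}}$ and the law of $(\beta_i)_{i\in V_n}$ under $\nu_V^W$ are probability measures on $\R^{V_n}$ (each concentrated on $\{\beta\in\R^{V_n}:\ H_\beta^{(V_n)}>0\}$, by \eqref{formule_densitechap3} and by Proposition 1 of \cite{sabotzeng} respectively), and a probability measure on $\R^{V_n}$ whose Laplace transform is finite on a nonempty open subset of $\R^{V_n}$ is determined by the restriction of that transform to that set. Since for both measures the Laplace transform is given by an explicit finite expression on all of $\R_+^{V_n}$, whose interior $(0,+\infty)^{V_n}$ is open and nonempty, it suffices to check that the two expressions agree there.

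For the left-hand side, I would simply apply \eqref{defbeta} with $U=V_n$ (a finite subset of $V$) and parameters $(\lambda_i)_{i\in V_n}\in(0,+\infty)^{V_n}$; since $V_n^c=V\setminus V_n$, the two sums occurring in \eqref{defbeta} are exactly the sum over edges internal to $V_n$ and the sum over edges joining $V_n$ to its complement, so
\[
\int \exp\!\Big(\!-\!\sum_{i\in V_n}\!\lambda_i\beta_i\Big)\nu_V^W(d\beta)=\exp\!\Big(\!-\tfrac12\!\!\sum_{\substack{i\sim j\\ i,j\in V_n}}\!\!W\big(\sqrt{(1+\lambda_i)(1+\lambda_j)}-1\big)-\!\!\sum_{\substack{i\sim j\\ i\in V_n,\,j\notin V_n}}\!\!W\big(\sqrt{1+\lambda_i}-1\big)\Big)\prod_{i\in V_n}\frac{1}{\sqrt{1+\lambda_i}}.
\]
For the right-hand side, I would plug $S=V_n$, $W_{i,j}=\hat{P}^{(n)}(i,j)=W\mathbf{1}\{i\sim j\}$ (no loop terms, since the graph has none) and $\eta_i=\hat{\eta}^{(n)}_i=\sum_{j\sim i,\,j\notin V_n}W$ into the Laplace transform formula for $\tilde{\nu}_S^{P,\eta}$ recalled in subsection \ref{margi}. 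The factor $\prod_{i\in V_n}(1+\lambda_i)^{-1/2}$ is immediate; the term $\tfrac12\sum_{i\sim j}W_{i,j}(\sqrt{(1+\lambda_i)(1+\lambda_j)}-1)$ is precisely the internal-edge sum above; and expanding $\langle\hat{\eta}^{(n)},\sqrt{1+\lambda}-1\rangle=\sum_{i\in V_n}\big(\sum_{j\sim i,\,j\notin V_n}W\big)\big(\sqrt{1+\lambda_i}-1\big)$ reindexes to the boundary-edge sum above. Hence the two Laplace transforms coincide on $(0,+\infty)^{V_n}$, which proves the claim.

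I do not expect a genuine obstacle here: the only points requiring care are the bookkeeping of the ordered-pair convention in the edge sums (it is consistent between \eqref{defbeta} and the formula of subsection \ref{margi}, each carrying a factor $\tfrac12$ on the internal sum and none on the boundary sum) and the standard fact that equality of Laplace transforms on the open set $(0,+\infty)^{V_n}$ forces equality of the measures — obtained by tilting both measures by an interior point $\lambda_0$ and matching the characteristic functions of the resulting finite measures via analytic continuation. As a consistency check, if one instead takes $\nu_V^W$ from \cite{sabotzeng} to be the projective limit of the measures $\tilde{\nu}_{V_N}^{\hat{P}^{(N)},\hat{\eta}^{(N)}}$, the statement also follows from Lemma \ref{restrictionmart}(i): restricting $\tilde{\nu}_{V_N}^{\hat{P}^{(N)},\hat{\eta}^{(N)}}$ to $U=V_n\subset V_N$ gives $\tilde{\nu}_{V_n}^{\hat{P}^{(N)}_{V_n,V_n},\,\hat{\eta}'}$ with $\hat{P}^{(N)}_{V_n,V_n}=\hat{P}^{(n)}$ and $\hat{\eta}'_i=\hat{\eta}^{(N)}_i+\sum_{j\in V_N\setminus V_n,\,j\sim i}W=\sum_{j\notin V_n,\,j\sim i}W=\hat{\eta}^{(n)}_i$, which is the stated law.
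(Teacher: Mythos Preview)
Your proof is correct. The paper does not actually prove this lemma: it is stated without proof, immediately after the sentence ``In \cite{sabotzeng}, the infinite potential $\nu_V^W$ is defined thanks to a sequence of potentials of the form $\tilde{\nu}_{V_n}^{P,\eta}$ on the exhausting sequence $(V_n)_{n\in\N}$ which is shown to be compatible.'' In other words, the paper treats the lemma as essentially definitional, the restriction statement being precisely the compatibility used to construct $\nu_V^W$ in the first place. Your second paragraph (the projective-limit consistency check via Lemma~\ref{restrictionmart}(i)) is exactly this viewpoint made explicit, and is the argument the paper is implicitly invoking.

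Your main argument via Laplace transforms is a clean alternative that does not rely on knowing how $\nu_V^W$ was built: it takes \eqref{defbeta} as the \emph{characterization} of $\nu_V^W$ and matches it directly against the explicit Laplace transform of $\tilde{\nu}_S^{P,\eta}$. This is slightly more self-contained than the paper's approach, since it works from the two displayed formulas alone without appealing to the construction in \cite{sabotzeng}. The bookkeeping you flag (ordered vs.\ unordered edge sums, the boundary term rewriting) is handled correctly, and the uniqueness-from-Laplace-transform step is standard. Either route is fine; yours has the advantage of being a complete argument rather than a citation.
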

\subsection{Warm-up about the VRJP}\label{warmupvrjp}
Recall that $(Z_t)_{t\geq 0}:=(Y_{D^{-1}(t)})_{t\geq 0}$ is a time-changed version of the VRJP with constant weights $W$ on the graph $V$. As explained before, $(Z_t)_{t\geq0}$ is easier to analyse than $(Y_t)_{t\geq 0}$ because it is a mixture of Markov processes. In the particular case of finite graphs, Sabot and Tarrès gave an explicit description of the density of a random field associated with the environment.
\begin{propi}[Theorem 2 in \cite{sabot_tarres}] \label{champU}
Let $(V,E)$ be a finite graph. Let $W>0$. Then, the time-changed VRJP
$(Z_t)_{t\geq 0}$ on $V$ with constant weights $W>0$ starting from $i_0\in V$ is a mixture of Markov processes. Moreover, it jumps from $i$ to $j$ at rate $W e^{U_j-U_i}$ where the field $(U_i)_{i\in V}$ has the following density on the set $\{(u_i)_{i\in V}\in \R^V, u_{i_0}=0\}$:
$$\frac{1}{\sqrt{2\pi}^{|V|-1}}\exp\left(-\sum\limits_{i\in V}u_i-W\sum\limits_{\{i,j\}\in E}\left((\cosh(u_i-u_j)-1\right)\right)\sqrt{D(W,u)}\prod\limits_{i\in V\backslash\{i_0\}}du_i$$
with $D(W,u)=\sum_{T\in\mathcal{T}} \prod_{\{i,j\}\in T} We^{u_i+u_j}$ where $\mathcal{T}$ is the set of spanning trees of $(V,E)$.
\end{propi}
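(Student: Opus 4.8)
The plan is to prove the representation in three stages: a deterministic time-change computation, a partial-exchangeability argument producing the mixture structure, and the explicit identification of the law of the limiting field, which is where the real work lies.

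\textbf{Stage 1: the time change.} Since $L_i'(s)=\mathbf 1\{Y_s=i\}$, one has $\frac{d}{ds}\big(L_i(s)^2-1\big)=2L_i(s)\mathbf 1\{Y_s=i\}$, hence $D'(s)=2L_{Y_s}(s)$ between jumps. Consequently the occupation time of $(Z_t)=(Y_{D^{-1}(t)})$ at $i$ up to time $t$ equals $\ell_i(t):=L_i\big(D^{-1}(t)\big)^2-1$, these satisfy $\sum_{i\in V}\ell_i(t)=t$, and the jump rate of $(Y_s)$ from $i$ to $j$, namely $WL_j(s)$, gets divided by $D'(s)=2L_i(s)$ under the time change. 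Thus $\big(Z_t,(\ell_i(t))_{i\in V}\big)$ is a time-homogeneous Markov process and $Z$ jumps from $i$ to $j$ at rate proportional to $\sqrt{(1+\ell_j(t))/(1+\ell_i(t))}=e^{u_j(t)-u_i(t)}$, where $u_i(t):=\tfrac12\ln(1+\ell_i(t))$. On a finite graph all local times tend to infinity, so it remains to show that the centred field $\big(u_i(t)-u_{i_0}(t)\big)_i$ converges a.s. and to identify the law of its limit $(U_i)_i$, with $U_{i_0}=0$.

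\textbf{Stage 2: the mixture structure.} The evolution of $(\ell_i(t))_i$ is purely additive and the rates depend on the past only through $\ell$, so the sequence of successively visited vertices is partially exchangeable in the sense of Diaconis--Freedman; equivalently, conditionally on the tail $\sigma$-field, $(Z_t)$ is a Markov jump process. A convergence argument — for instance an exponential martingale built from the generator of $\big(Z_t,\ell(t)\big)$, together with the reversibility of the frozen chain with jump rates $We^{u_j-u_i}$ with respect to the measure $i\mapsto e^{2u_i}$ — yields $u_i(t)-u_{i_0}(t)\to U_i$ a.s. and shows that, conditionally on $(U_i)_i$, the process $(Z_t)$ is the reversible Markov jump process with jump rates $We^{U_j-U_i}$ after the harmless time normalisation. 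Hence the theorem reduces to computing the density of $(U_i)_{i\neq i_0}$ on $\{u\in\R^V: u_{i_0}=0\}$.

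\textbf{Stage 3: identifying the law of $U$ (the main obstacle).} I would either (a) compute the joint Laplace transform of the limiting occupation field by exhibiting an explicit functional of $\big(Z_t,\ell(t)\big)$ of the form $g(\ell(t))\,\mathbf 1\{Z_t=i\}$ that is a martingale, applying optional stopping and then inverting; or (b) induct on $|V|$, checking the formula directly for a single edge — where the VRJP on $\{i_0,j\}$ reduces to a P\'olya-type urn whose limit has exactly the claimed two-vertex density — and then passing from $V$ to $V$ with one pendant vertex removed via Schur-complement and Sherman--Morrison identities for $H_\beta^{-1}$; the behaviour of such densities under adding a leaf is precisely Lemma~\ref{restrictionmart} read in reverse. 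Along the way the spanning-tree polynomial is recognised, by the Matrix--Tree theorem, as $D(W,u)=\det\big(\mathcal L(u)_{V\setminus\{i_0\},\,V\setminus\{i_0\}}\big)$, where $\mathcal L(u)$ is the weighted graph Laplacian with edge weights $We^{u_i+u_j}$; conjugating by $\mathrm{diag}(e^{u})$ identifies this determinant with $e^{2\sum_{i\neq i_0}u_i}\det\big((H_\beta)_{V\setminus\{i_0\}}\big)$ for $\beta_i=\tfrac W2\sum_{j\sim i}e^{u_j-u_i}$, which makes $\sqrt{D(W,u)}$ appear as the natural Jacobian/normalisation linking the $u$-coordinates to the $\beta$-potential of \eqref{formule_densitechap3}. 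The hard part is exactly this bookkeeping: the $\cosh$-interaction, the reference measure $\prod_{i\neq i_0}du_i$, and the constant $\sqrt{2\pi}^{-(|V|-1)}$ must all be produced with the correct normalisations, and upgrading the a.s.\ convergence of $u_i(t)-u_{i_0}(t)$ from mere tightness uses the reversibility/exchangeability input of Stage~2 in an essential way.
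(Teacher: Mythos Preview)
The paper does not prove this proposition: it is quoted verbatim as Theorem~2 of \cite{sabot_tarres} and serves purely as background in the ``Warm-up about the VRJP'' subsection, with no proof or sketch supplied. There is therefore nothing in the paper to compare your argument against.

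That said, your three-stage outline is broadly faithful to the strategy of the original Sabot--Tarr\`es paper: the deterministic time-change computation in Stage~1 is essentially their Lemma~1--2 setup, and the partial-exchangeability / de~Finetti argument of Stage~2 is indeed how they obtain the mixture structure. Your Stage~3, however, is where the sketch becomes soft. The actual identification of the density in \cite{sabot_tarres} does not proceed by either of your proposed routes (a) or (b); it goes through an explicit computation of the law of the field at a suitable stopping time combined with supersymmetric/horospherical integral identities inherited from \cite{DSZ} and \cite{Disp}. Your induction-on-$|V|$ idea via Lemma~\ref{restrictionmart} ``read in reverse'' is anachronistic: that lemma is about the $\beta$-potential introduced later in \cite{SZT} and \cite{sabotzeng}, and while the change of variables you describe (conjugating the Laplacian by $\mathrm{diag}(e^u)$, Matrix--Tree for $D(W,u)$) is exactly the right bookkeeping, it does not by itself \emph{prove} that the limiting field has this density --- it only shows the formula is consistent with the later $\beta$-picture. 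The genuine analytic input (that the claimed density integrates to~$1$ and coincides with the law of the a.s.\ limit) is what you have not supplied, and it is the heart of the original proof.
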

This density was originally studied in \cite{DSZ} in order to study random band matrices. Remark that the distribution of $U$ does not have any obvious property of compatibility. Therefore, this was not possible to extend the field $U$ on a general infinite graph. However, in \cite{SZT}, Sabot, Zeng and Tarrès introduced a smart change of variable which relates the field $U$ and the $\beta$-potential. More precisely, if $(V,E)$ is a finite graph, then the field $U$ of Proposition \ref{champU} rooted at $i_0$ is distributed as $(G^{(V)}(i_0,i)/G^{(V)}(i_0,i_0))_{i\in V}$ where $G^{(V)}$ is the inverse of $H_{\beta}^{(V)}$ which is the operator associated with the potential $\beta$ with distribution $\tilde{\nu}_V^{P,0}$ where $P(i,j)=W\textbf{1}\{i\sim j\}$. In order to have a representation of the environment of the VRJP on infinite graph, Sabot and Zeng extended the $\beta$-potential on infinite graphs thanks to the measure $\nu_V^W$ and they proved the following result:
\begin{propi}[Theorem 1 in \cite{sabotzeng}] \label{enviropsi}
If $V$ is $\mathbb{Z}^d$ with $d\geq 1$ or an infinite tree, then the time-changed VRJP
$(Z_t)_{t\geq 0}$ on $V$ with constant weights $W>0$ is a mixture of Markov processes. Moreover, the associated random environment can be described in the following way: if the VRJP started from $i_0$, it jumps from $i$ to $j$ at rate $(1/2)WG(i_0,j)/G(i_0,i)$ where for every $i,j\in V$,
$$G(i,j)=\hat{G}(i,j)+\frac{1}{2\gamma}\psi(i)\psi(j)$$
where $\gamma$ is a random variable with law $\Gamma(1/2,1)$ which is independent from the the $\beta$-potential with distribution $\nu_V^W$.
\end{propi}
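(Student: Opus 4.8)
This statement is Theorem~1 of \cite{sabotzeng}, so the plan follows Sabot--Zeng: I would obtain the environment on the infinite graph $V$ as a limit of the environments on an exhausting sequence of \emph{wired} finite subgraphs. Concretely, for $n\in\N^*$ let $\bar V_n=V_n\sqcup\{\delta_n\}$ be $V_n$ with all of $V_n^c$ collapsed to one vertex $\delta_n$, so that $i\in\partial V_n$ is joined to $\delta_n$ by an edge of weight $\hat{\eta}^{(n)}_i=\sum_{j\sim i,\,j\notin V_n}W$ while edges inside $V_n$ keep weight $W$. On the finite graph $\bar V_n$, Proposition~\ref{champU} together with the change of variables of \cite{SZT} recalled just after it describes the time-changed VRJP started at $i_0$: conditionally on a potential $\beta$ with law $\tilde\nu_{\bar V_n}^{P,0}$ for these weights, it is the Markov jump process with rate $\tfrac12 W\,G^{(\bar V_n)}(i_0,j)/G^{(\bar V_n)}(i_0,i)$ from $i$ to $j$, where $G^{(\bar V_n)}=(H_{\beta}^{(\bar V_n)})^{-1}$.

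The first real step is a Schur-complement computation. Writing $G^{(\bar V_n)}$ in block form for the partition $\bar V_n=V_n\sqcup\{\delta_n\}$, and using that $(H_{\beta}^{(\bar V_n)})_{V_n,V_n}=(H_{\beta})_{V_n,V_n}$ has inverse $\hat{G}_n$ with off-diagonal block $-\hat{\eta}^{(n)}$, the bordered-matrix inversion formula gives, for $i,j\in V_n$,
\[
G^{(\bar V_n)}(i,j)=\hat{G}_n(i,j)+\frac{1}{S_n}\,\psi_n(i)\psi_n(j),\qquad S_n:=2\beta_{\delta_n}-\langle\hat{\eta}^{(n)},\hat{G}_n\hat{\eta}^{(n)}\rangle ,
\]
where $\psi_n|_{V_n}=\hat{G}_n\hat{\eta}^{(n)}$; applying $H_{\beta}$ to this vector gives $0$ on $V_n$ and $1$ on $V_n^c$, so it is exactly the solution of \eqref{defipsi}. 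Next I would identify the scalar $S_n$: by Lemma~\ref{restrictionmart}(i) the marginal $\beta_{V_n}$ has law $\tilde\nu_{V_n}^{\hat{P}^{(n)},\hat{\eta}^{(n)}}$, which by Lemma~\ref{restrictioninfini} is the restriction of $\nu_V^W$ to $V_n$; and by Lemma~\ref{restrictionmart}(ii), conditionally on $\beta_{V_n}$ the variable $\beta_{\delta_n}$ follows the single-vertex law $\tilde\nu_{\{\delta_n\}}^{\check P,0}$ with loop weight $\langle\hat{\eta}^{(n)},\hat{G}_n\hat{\eta}^{(n)}\rangle$. Plugging this into the density \eqref{formule_densitechap3} with $|S|=1$, one finds that $S_n$ has density proportional to $\mathbf{1}\{x>0\}\,x^{-1/2}e^{-x/2}$, and crucially this does not depend on $\beta_{V_n}$; hence $S_n=2\gamma_n$ with $\gamma_n\sim\Gamma(1/2,1)$ independent of $\beta_{V_n}$, and on a space carrying $\beta\sim\nu_V^W$ we may write $G^{(\bar V_n)}(i,j)=\hat{G}_n(i,j)+\tfrac1{2\gamma_n}\psi_n(i)\psi_n(j)$ for $i,j\in V_n$.

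Then I would pass to the limit $n\to\infty$. By Proposition~\ref{thsabotzengchap3}, $\hat{G}_n(i,j)\uparrow\hat{G}(i,j)<+\infty$ and $\psi_n(i)\to\psi(i)$, $\nu_V^W$-a.s. Since $\gamma_n$ is independent of $\beta_{V_n}$ for each $n$ and $\beta_{V_n}$ converges a.s., the pairs $(\beta_{V_n},\gamma_n)$ converge in law to $(\beta,\gamma)$ with $\gamma\sim\Gamma(1/2,1)$ independent of $\beta$, so the ratios $G^{(\bar V_n)}(i_0,j)/G^{(\bar V_n)}(i_0,i)$ converge in law to $G(i_0,j)/G(i_0,i)$ with $G(i,j)=\hat{G}(i,j)+\tfrac1{2\gamma}\psi(i)\psi(j)$. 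On the path side, the VRJP on $\bar V_n$ and on $V$ agree until the walk exits $V_{n-1}$ because the reinforcement rule is local, so the VRJP on $V$ is a limit of the VRJPs on $\bar V_n$; combining the two convergences, and using that $\hat{G}<+\infty$ keeps the limiting jump rates finite so that the limiting jump process is non-explosive, one identifies the conditional law of the time-changed VRJP on $V$ given $(\beta,\gamma)$ as the announced Markov jump process. The hard part will be exactly this last passage to the limit: showing that the time-changed VRJP on $V$ is itself a mixture of Markov processes and that its mixing measure is precisely the increasing limit of the finite-volume ones. That needs convergence of the laws of whole trajectories (not merely of the environments), a check that the limiting process does not explode and that the infinite-volume time change $D$ is a.s.\ onto $[0,+\infty)$, and the transfer of the de~Finetti / partial-exchangeability structure through the limit; here the a.s.\ finiteness $\hat{G}<+\infty$ from Proposition~\ref{thsabotzengchap3} is what makes everything work, while by contrast the Schur-complement identity and the appearance of the $\Gamma(1/2,1)$ factor are short and purely computational.
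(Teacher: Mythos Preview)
The paper does not prove this proposition: it is quoted as background (Theorem~1 in \cite{sabotzeng}) and stated without proof. Your sketch is a faithful outline of the Sabot--Zeng argument from the original reference, so there is nothing to compare against here.
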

In \cite{TG}, Gerard proved that, in the case of trees, in the transient phase, there are infinitely many different representations of the environment of the VRJP. In this paper, we will often use a representation which is not the same as the one which is given in Proposition \ref{enviropsi}. Now, let us describe this other representation.
\subsection{Specificities of the tree}\label{constructionsec}
In the density given in Proposition \ref{champU}, if the graph is a tree, one can observe that the random variables $U_i-U_{\cev{i}}$ are i.i.d  and distributed as the logarithm of an Inverse Gaussian random variable. It comes from the fact that the determinant term in the density becomes a product. 
Therefore, when the graph $(V,E)$ is an infinite tree with a root $o$, this is natural to define an infinite version of the field $U$ in the following way: for every $i\in V$,
$$e^{U_i}:=\prod\limits_{o<u\leq i}A_u$$
where  $(A_i)_{i\in V\backslash\{o\}}$ is a family of independent Inverse Gaussian random variables with parameters $(1,W)$. This representation implies directly the following result:
\begin{propi}[Theorem 3 in \cite{chenzeng}] \label{mixturetree}
If $V$ is a tree with root $o$, the discrete-time VRJP $(\tilde{Z}_n)_{n\in\N}$ which is associated with $(Z_t)_{t\geq 0}$ is a random walk in random environment whose random conductances are given by

$$c(i,\cev{i})=We^{U_i+U_{\cev{i}}}=WA_i\prod\limits_{o<u\leq \cev{i}}A_u^2$$
for every $i\in V\backslash\{o\}$.
\end{propi}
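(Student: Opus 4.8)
The strategy is to realise $(\tilde{Z}_n)_{n\in\N}$ as the jump chain of the continuous-time mixture of Markov jump processes furnished by Proposition~\ref{champU} (on finite trees) and Proposition~\ref{enviropsi} (on infinite trees), and then simply to read off the conductances. Two elementary remarks reduce the statement to a density computation. First, a continuous-time pure-jump Markov process that jumps from $i$ to a neighbour $j$ at rate $r(i,j)$ has, observed at its successive jump times, the law of the discrete Markov chain with transition probabilities $r(i,j)/\sum_{k\sim i}r(i,k)$. Second, the time change $D$ relating $(Z_t)_{t\ge0}$ to the original VRJP $(Y_s)_{s\ge0}$ is continuous and strictly increasing, so $(Z_t)$ and $(Y_s)$ visit the same vertices in the same order; hence $(\tilde{Z}_n)_{n\in\N}$ is exactly this jump chain. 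Consequently it suffices to prove that, conditionally on the environment, the jump rates on the tree are, for fixed $i$, proportional in $j$ to $e^{U_j}$, where $(U_i)_{i\in V}$ is the field defined by $e^{U_i}=\prod_{o<u\le i}A_u$ with $(A_u)_{u\in V\setminus\{o\}}$ independent $IG(1,W)$ variables. Granting this, the transition probability from $i$ to a neighbour $j$ is
$$\frac{We^{U_j-U_i}}{\sum_{k\sim i}We^{U_k-U_i}}=\frac{e^{U_j}}{\sum_{k\sim i}e^{U_k}}=\frac{We^{U_i+U_j}}{\sum_{k\sim i}We^{U_i+U_k}}=\frac{c(i,j)}{\sum_{k\sim i}c(i,k)}$$
with $c(i,j)=We^{U_i+U_j}$, so $(\tilde{Z}_n)_{n\in\N}$ is the random walk in random environment with conductances $c$; and since the ancestral line of $i$ is that of $\cev{i}$ together with the vertex $i$ itself, $c(i,\cev{i})=We^{U_i+U_{\cev{i}}}=WA_i\prod_{o<u\le\cev{i}}A_u^2$, which is the announced formula.

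It remains to identify the rates and the law of $U$. On a \emph{finite} tree, Proposition~\ref{champU} gives directly that the time-changed VRJP jumps from $i$ to $j$ at rate $We^{U_j-U_i}$, with $(U_i)$ distributed according to the density displayed there. Because a tree has a unique spanning tree, the determinant term degenerates to $D(W,u)=\prod_{\{i,j\}\in E}We^{u_i+u_j}$, and the density of $U$ factorises over edges. Changing variables to the increments $v_i:=U_i-U_{\cev{i}}$, $i\neq o$ (a unimodular linear change), and then to $x_i:=e^{v_i}$ (Jacobian $\prod_{i\neq o}x_i^{-1}$), a short computation --- in which the factor $e^{-\sum_i u_i}$, the part $\prod_{\{i,j\}\in E}e^{(u_i+u_j)/2}$ coming from $\sqrt{D(W,u)}$, and the $\cosh$--interaction combine --- shows that the joint law of $(x_i)_{i\neq o}$ is that of a family of independent $IG(1,W)$ variables, using the identity $(x-1)^2/x=2(\cosh(\ln x)-1)$ to match \eqref{densityig}. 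Thus on a finite tree the edge increments of $U$ are i.i.d.\ $\log IG(1,W)$, which is the finite version of the field above; this is precisely the observation recorded at the start of subsection~\ref{constructionsec}, and it settles the finite-tree case.

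For an \emph{infinite} tree one must check that the infinite product field still governs the environment, i.e.\ that it is compatible with the representation $G=\hat{G}+\frac{1}{2\gamma}\psi\psi$ of Proposition~\ref{enviropsi}. I would do this by approximation. By Lemma~\ref{restrictioninfini} the restriction of $\nu_V^W$ to a ball $V_n$ is a finite $\beta$-potential $\tilde{\nu}_{V_n}^{\hat{P}^{(n)},\hat{\eta}^{(n)}}$ that differs from the Dirichlet potential underlying Proposition~\ref{champU} only through the boundary weights $\hat{\eta}^{(n)}$; using the monotone convergence $\hat{G}_n\uparrow\hat{G}<\infty$ of Proposition~\ref{thsabotzengchap3}, together with the explicit dependence of the conditional laws on $\eta$ given in Lemma~\ref{restrictionmart}, one shows that the influence of $\hat{\eta}^{(n)}$ on the environment near $o$ vanishes as $n\to\infty$, so that the environment $G$ of Proposition~\ref{enviropsi}, restricted to any finite set, generates the same transition probabilities as the infinite field $(e^{U_i})$. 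Since the law of $(\tilde{Z}_k)_{0\le k\le N}$ depends on the environment through only finitely many vertices for each fixed $N$, this identification on finite sets suffices, and Proposition~\ref{mixturetree} follows.

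The genuine difficulty is concentrated in this last step: Proposition~\ref{champU} is a finite-graph statement, so one has to control the boundary term $\hat{\eta}^{(n)}$ of Lemma~\ref{restrictioninfini} and confirm that its effect disappears in the limit, thereby justifying that the naive infinite field $e^{U_i}=\prod_{o<u\le i}A_u$ really does represent the VRJP environment on the infinite tree. An alternative is to compute $\hat{G}$ and $\psi$ on trees head-on from the $1$-dependence and Inverse-Gaussian structure of $\nu_V^W$ via Lemma~\ref{restrictionmart}, and to verify the identity $G(o,i)/G(o,o)=e^{U_i}$ in law. Everything else --- the jump-chain identity, the degeneration of the spanning-tree determinant, the change of variables to Inverse-Gaussian increments, and the conductance bookkeeping --- is routine.
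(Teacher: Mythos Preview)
The paper does not prove Proposition~\ref{mixturetree}: it is cited as Theorem~3 of \cite{chenzeng}, and the paper only offers the one-line heuristic that on a tree the spanning-tree determinant in Proposition~\ref{champU} degenerates to a product, whence the edge increments of $U$ are i.i.d.\ $\log IG(1,W)$. Your finite-tree argument is exactly this computation, carried out in full, and it is correct.

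For the infinite tree your outline has a real problem. You propose to pass through Proposition~\ref{enviropsi} and argue that the $G$-environment there ``generates the same transition probabilities'' as the product field $e^{U_i}=\prod_{o<u\le i}A_u$, by letting the boundary term $\hat{\eta}^{(n)}$ vanish. But the paper states explicitly, in the sentence immediately preceding subsection~\ref{constructionsec}, that on trees the representation used here is \emph{not} the one from Proposition~\ref{enviropsi}, and that in the transient phase there are infinitely many distinct representations of the environment (citing \cite{TG}). So the two environments are genuinely different random objects; conditionally on each, the transition probabilities differ, and no ``vanishing boundary'' argument will identify them. What is the same is only the \emph{law of the process} after averaging over the environment. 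Your route therefore aims at the wrong target.

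The cleaner approach --- and the one implicit in the paper's heuristic and carried out in \cite{chenzeng} --- bypasses Proposition~\ref{enviropsi} entirely. One works directly with the VRJP as a process: the VRJP on the infinite tree, observed until it first reaches generation $n+1$, has the same law as the VRJP on the finite subtree with the appropriate boundary, and by your finite-tree computation the latter is a mixture governed by i.i.d.\ $IG(1,W)$ edge variables on the edges of that subtree. These finite environments are consistent as $n$ grows (the edge variables on $V_n$ are simply a subset of those on $V_{n+1}$), so Kolmogorov's extension theorem yields the infinite field $(A_u)_{u\in V\setminus\{o\}}$ and the mixture representation on the whole tree. Your ``alternative'' in the last paragraph, computing $\hat{G}$ and $\psi$ head-on, would again entangle you with the wrong representation.
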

This representation of the environment of the VRJP on trees is particularly useful because the conductances are almost products of i.i.d random variables along a branch of the tree. This situation is very close from branching random walks. This observation is crucial for the proofs in this paper.
In particular, thanks to this representation and its link with branching random walks, this is much easier to compute the critical point on Galton-Watson trees.
\begin{propi}[Theorem 1 in \cite{chenzeng} or Theorem 1 in \cite{Basdevant}]\label{pointcritloc}
Let $V$ be a Galton-Watson tree with offspring law $\mu$ satisfying hypothesis $A_1$. Then the VRJP on $V$ with constant weights $W$ is recurrent if and only if
$$mQ(W,1/2)\leq 1$$
where $m$ is the mean of $\mu$. In particular, the critical point $W_c(\mu)$ is the only solution of the equation
$$mQ(W,1/2)=1.$$
\end{propi}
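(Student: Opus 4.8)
The plan is to work through the electrical-network picture provided by Proposition~\ref{mixturetree}. Conditionally on the environment, $(\tilde Z_n)_{n\in\N}$ is the reversible Markov chain on $V$ with conductances $c(i,\cev i)=We^{U_i+U_{\cev i}}$, $e^{U_i}=\prod_{o<u\le i}A_u$, the $A_u$ being i.i.d.\ $IG(1,W)$; so ``the VRJP is recurrent'' means exactly that the effective conductance $\mathcal C_\infty:=C_{\mathrm{eff}}(o\leftrightarrow\infty)$ of this network vanishes. The dichotomy between $\mathcal C_\infty=0$ a.s.\ and $\mathcal C_\infty>0$ a.s.\ is the $0$--$1$ law for $\psi(o)$ recalled in Proposition~\ref{transphasechap2}, and $\mu(0)=0$ forbids extinction, so it is enough to decide whether $\P_{\mu,W}(\mathcal C_\infty>0)$ is $0$ or positive. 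The decisive observation is that $(U_i)_{i\in V}$ is a branching random walk: $\mu$-branching with i.i.d.\ increments distributed as $\ln A$, whose one-generation Laplace functional is $\E_{\mu,W}[\sum_{|i|=1}e^{tU_i}]=mQ(W,t)$. Because $Q(W,\cdot)$ is smooth, strictly convex and, by the Inverse Gaussian identity $Q(W,t)=Q(W,1-t)$, symmetric about $t=\tfrac12$, it attains its minimum precisely at $t=\tfrac12$: this is where the exponent $\tfrac12$ in the statement originates.

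For the recurrence half assume $mQ(W,\tfrac12)\le 1$. Then $\big(\sum_{|i|=n}e^{U_i/2}\big)_{n}$ is a non-negative supermartingale (the conditional expectation of the $(n{+}1)$-st term given the first $n$ generations equals $mQ(W,\tfrac12)\le1$ times the $n$-th term), and since $\E_{\mu,W}[\sum_{|i|=n}e^{tU_i}]=(mQ(W,t))^n$, standard branching-random-walk estimates give $\max_{|i|=n}U_i\to-\infty$ a.s.; in particular, along every ray, $U_i+U_{\cev i}\to-\infty$. Hence for each $a\in\N$ the set $\Pi_a$ of vertices at which $U_i+U_{\cev i}$ first drops below $-2a$ along the ray from $o$ is an honest cutset separating $o$ from $\infty$, the $\Pi_a$ are pairwise disjoint, and every edge of $\Pi_a$ has conductance $\le We^{-2a}$. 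A many-to-one computation over the stopping line $\Pi_a$, combined with the optional stopping theorem applied to the supermartingale $(e^{S_k/2}m^k)_k$ for the underlying random walk $S$ (which is a supermartingale precisely because $mQ(W,\tfrac12)\le1$), bounds $\E_{\mu,W}[\sum_{i\in\Pi_a}c(i,\cev i)]$ by a quantity decaying geometrically in $a$, with only a polynomial correction at the critical value $mQ(W,\tfrac12)=1$ (harmless thanks to the $e^{-2a}$ prefactor, which is why hypothesis $A_1$ alone is enough). A Borel--Cantelli argument then yields $\sum_{i\in\Pi_a}c(i,\cev i)<e^{-a}$ for all large $a$, a.s., so the Nash--Williams inequality gives $R_{\mathrm{eff}}(o\leftrightarrow\infty)\ge\sum_a\big(\sum_{i\in\Pi_a}c(i,\cev i)\big)^{-1}=\infty$ and $\mathcal C_\infty=0$: the VRJP is recurrent.

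For the transience half assume $mQ(W,\tfrac12)>1$, equivalently (by the shape of $Q$) $mQ(W,t)>1$ for all $t\in[0,1]$, so that the maximal speed $\inf_{\theta>0}\theta^{-1}\ln(mQ(W,\theta))$ is $\ge 2\ln(mQ(W,\tfrac12))>0$. One then needs a unit flow from $o$ to $\infty$ of finite energy $\sum_{i\ne o}\frac1W e^{-U_i-U_{\cev i}}\theta_i^2$; equivalently, a non-trivial solution of the recursive distributional equation $\mathcal C_\infty\overset{d}{=}\sum_{k=1}^{N}\big(\tfrac1{WA_k}+\tfrac1{A_k^2\mathcal C_k'}\big)^{-1}$ coming from series/parallel reduction on the tree, where the bound $(\tfrac1{Wa}+\tfrac1{a^2c})^{-1}\le Wa$ keeps the natural iteration bounded. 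The construction goes through a size-biased/spine change of measure at a tilt parameter in the non-degeneracy range $(0,t^*(m,W))$ — where $t^*(m,W)\in(0,\tfrac12)$ is the quantity introduced before Theorem~\ref{moments} and where the additive martingale $M_n(t)=\sum_{|i|=n}e^{tU_i}/(mQ(W,t))^n$ has a strictly positive limit (one may first thin $V$ to bounded offspring, still supercritical since $m>1$, to get the $L\log L$ condition for free, the increments having all exponential moments) — and amounts to the second-moment argument for random walks in random environment on Galton--Watson trees. This is the step I expect to be the main obstacle: verifying that the flow has finite energy (or, in the percolation formulation, that the thinned branching process is supercritical) is exactly where the condition $mQ(W,\tfrac12)>1$ is consumed, and it requires careful control of overshoots and of the $L^2$ behaviour of the branching-random-walk martingales.

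Putting the two halves together, the VRJP is recurrent iff $mQ(W,\tfrac12)\le1$. It remains to see that $mQ(W,\tfrac12)=1$ has a unique root. The map $W\mapsto Q(W,\tfrac12)=\E[\sqrt A]$ with $A\sim IG(1,W)$ is continuous, tends to $0$ as $W\to0^+$ and to $1$ as $W\to+\infty$ (the law $IG(1,W)$ concentrating at its mean $1$), and is strictly increasing — which can be read off the closed form of $Q(W,\tfrac12)$ in terms of the modified Bessel function $K_{1/2}$ (the same Bessel functions appearing in Proposition~\ref{criticalp}), or from the fact that the dispersion of $IG(1,W)$ decreases with $W$. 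Since $m>1$ is fixed, $mQ(W,\tfrac12)=1$ therefore has a single solution $W_c(\mu)\in(0,+\infty)$, depending on $\mu$ only through $m$, with recurrence for $W\le W_c(\mu)$ and transience for $W>W_c(\mu)$.
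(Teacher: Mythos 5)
First, note that the paper does not prove Proposition~\ref{pointcritloc} at all: it is imported verbatim from Theorem~1 of \cite{chenzeng} (or \cite{Basdevant}), so your attempt can only be judged on its own merits, not against an internal argument. On those merits, the decisive gap is in the transience direction. You reduce $mQ(W,1/2)>1\Rightarrow$ transience to the construction of a unit flow of finite energy (equivalently, a nontrivial solution of the recursive conductance equation, via a size-biased/second-moment argument), and you yourself flag this step as ``the main obstacle'' without carrying it out. But this is precisely where the hypothesis $mQ(W,1/2)>1$ has to be consumed; it is the substance of the harder implication, so as written the ``only if'' half of the equivalence is a plan rather than a proof. (Two smaller slips in that paragraph: the inequality $\inf_{\theta>0}\theta^{-1}\ln(mQ(W,\theta))\ge 2\ln(mQ(W,1/2))$ goes the wrong way, since the infimum is at most the value at $\theta=1/2$; and the tilt range you invoke, $(0,t^*(m,W))$ with $t^*<1/2$, is the object the paper defines in the subcritical phase $W<W_c(\mu)$, not in the regime you are working in.)

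The recurrence half is essentially the standard cutset argument and is morally sound, but it contains a genuine flaw and an unproved estimate. The stopping lines $\Pi_a$ need not be pairwise disjoint: if $U_i+U_{\cev{i}}$ jumps from above $-2a$ to below $-2(a+1)$ in one step, the same edge lies in several $\Pi_a$'s, so Nash--Williams cannot be invoked as stated. This is repairable without Nash--Williams: the effective conductance to infinity is bounded by the total conductance of any single cutset, so it suffices that $\sum_{i\in\Pi_a}c(i,\cev{i})\to0$ a.s.\ along a subsequence. However, the expectation bound you assert for $\E_{\mu,W}[\sum_{i\in\Pi_a}c(i,\cev{i})]$ is not routine at criticality: after the many-to-one reduction one must control jointly the overshoot below $-2a$ and the height of the walk just before first passage (the prefactor $e^{U_{\cev{i}}/2}$ does not disappear by optional stopping alone), and this is exactly the computation that constitutes the proof in \cite{chenzeng}; asserting it ``with only a polynomial correction'' is not a proof. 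Also, for the final uniqueness claim, the strict monotonicity of $W\mapsto Q(W,1/2)$ is justified only by ``the dispersion of $IG(1,W)$ decreases with $W$''; this is a hand-wave for a fact that, in the paper's own related computation (Proposition~\ref{criticalp}), requires a quoted Bessel-function inequality of the form $1+\tfrac{1}{2W}>K_1(W)/K_0(W)$. Either cite such an inequality or derive uniqueness of the root from the uniqueness of the phase transition recalled in Proposition~\ref{transphasechap2}.
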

Now, remind that our goal is to study the martingale $(\psi_n(o))_{n\in\N}$. This martingale is defined through the potential $\beta$.
If $V$ is an infinite tree with a special vertex $o$ called the root, we can couple the field $U$ and the potential $\beta$ in the following way:
for every $i\in V$, we define
\begin{align}\label{defbetagen}
\tilde{\beta_i} :=\frac{W}{2}\sum\limits_{i\sim j}e^{U_j-U_i}=\frac{W}{2}\left(\sum\limits_{\cev{u}=i} A_u+\textbf{1}\{i\neq o\}\frac{1}{A_i} \right).
\end{align}
For every $i\in V$, $\tilde{\beta}_i$ can be interpreted as the total jump rate of the VRJP at $i$. The potential $\tilde{\beta}$ is very important for our purposes. One reason for that is Lemma \ref{resistance} which  makes a link between the effective resistance associated with the VRJP and some quantity defined through $(\tilde{\beta}_i)_{i\in V}$.
Now, let $\gamma$ be a Gamma distribution with parameter $(1/2,1)$ which is independent of $(A_i)_{i\in V\backslash\{o\} }$. Then, let us define 
\begin{align}
\beta=\tilde{\beta}+\textbf{1}\{\cdot=o \}\gamma.\label{tildage}
\end{align}
\begin{lem}\label{construction}
Let us assume that $V$ is a tree. Let $W>0$. Then, the potential $(\beta_i)_{i\in V}$ defined by (\ref{tildage}) has law $\nu_V^W$.
\end{lem}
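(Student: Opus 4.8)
The plan is to identify the law of the potential $\beta$ defined by \eqref{tildage} via its Laplace transform, matching it against the defining formula \eqref{defbeta} for $\nu_V^W$. Since $\nu_V^W$ is characterized by the Laplace transforms of its finite-dimensional marginals (indexed by finite $U\subset V$), and since both $\beta$ and $\nu_V^W$ are supported on the same set, it suffices to show that for every finite $U\subset V$ and every $(\lambda_i)_{i\in U}\in\R_+^U$,
\begin{align*}
\E\left[\exp\left(-\sum_{i\in U}\lambda_i\beta_i\right)\right]=\exp\left(-\tfrac12\sum_{\substack{i\sim j\\ i,j\in U}}W\left(\sqrt{1+\lambda_i}\sqrt{1+\lambda_j}-1\right)-\sum_{\substack{i\sim j\\ i\in U, j\notin U}}W\left(\sqrt{1+\lambda_i}-1\right)\right)\frac{1}{\prod_{i\in U}\sqrt{1+\lambda_i}}.
\end{align*}
Here the expectation is over the independent family $(A_u)_{u\in V\setminus\{o\}}$ of $IG(1,W)$ variables together with the independent $\gamma\sim\Gamma(1/2,1)$.

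First I would reduce to a computation over edges. Writing $\beta_i=\frac{W}{2}\sum_{\cev u=i}A_u+\frac{W}{2}\mathbf 1\{i\neq o\}\frac{1}{A_i}+\mathbf 1\{i=o\}\gamma$, each variable $A_u$ (with $u\neq o$, parent $\cev u$) appears in exactly the two terms $\frac{W}{2}A_u$ inside $\beta_{\cev u}$ and $\frac{W}{2}\frac{1}{A_u}$ inside $\beta_u$. Hence, by independence, $\E[\exp(-\sum_{i\in U}\lambda_i\beta_i)]$ factorizes into a product over edges $\{u,\cev u\}$ of the tree (meeting $U$) of terms of the form $\E[\exp(-\frac{W}{2}(a A_u+b/A_u))]$ for appropriate $a,b\ge0$, times the factor $\E[e^{-\lambda_o\gamma}]=(1+\lambda_o)^{-1/2}$ coming from the Gamma variable at the root (using that $\Gamma(1/2,1)$ has Laplace transform $s\mapsto(1+s)^{-1/2}$). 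The key analytic input is the classical Inverse Gaussian identity: for $A\sim IG(1,W)$ and $a,b\ge 0$,
\begin{align*}
\E\left[\exp\left(-\tfrac{W}{2}\left(aA+\tfrac{b}{A}\right)\right)\right]=\frac{1}{\sqrt{1+b}}\exp\left(-W\left(\sqrt{(1+a)(1+b)}-1\right)\right),
\end{align*}
which follows directly from \eqref{densityig} by completing the square in the exponent. One must be careful to assign, for each edge $\{u,\cev u\}$, the correct values of $a$ and $b$ depending on whether $u\in U$, $\cev u\in U$, both, or only one; there are three relevant edge types (both endpoints in $U$, child in $U$ only, parent in $U$ only), plus edges from $U$ leaving to $V\setminus U$ which contribute boundary terms, exactly mirroring the two sums in \eqref{defbeta}.

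Then I would carefully bookkeep the product of all these edge factors. The prefactors $\frac{1}{\sqrt{1+b}}$ should combine with the root factor to give precisely $\prod_{i\in U}(1+\lambda_i)^{-1/2}$ — each vertex $i\in U$, $i\neq o$, contributing its factor through the edge to its parent where it plays the role of the ``$b$'' variable — and the exponential factors should sum to the exponent on the right-hand side of \eqref{defbeta}, with edges internal to $U$ giving the $\sqrt{1+\lambda_i}\sqrt{1+\lambda_j}$ terms and edges crossing $\partial U$ giving the $\sqrt{1+\lambda_i}-1$ terms. Since \eqref{defbeta} determines $\nu_V^W$ uniquely (as a 1-dependent field on a locally finite graph, it is determined by its finite-dimensional Laplace transforms), this identifies the law of $\beta$ as $\nu_V^W$ and completes the proof.

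The main obstacle I anticipate is purely combinatorial rather than analytic: matching the bookkeeping of ``which $A_u$ contributes an $a$ versus a $b$ to which vertex'' so that the prefactors and exponents reassemble exactly into the right-hand side of \eqref{defbeta}, especially handling correctly the boundary edges leaving $U$ and the asymmetry between a vertex and its parent. The underlying Inverse Gaussian Laplace transform computation is standard and short. One should also note that the support condition is automatic: by construction $\tilde\beta_i>0$ for all $i$ and the tree structure guarantees $(H_\beta)_{U,U}>0$, so $\beta\in\mathcal D_V^W$ almost surely, consistent with Proposition \ref{thsabotzengchap3}.
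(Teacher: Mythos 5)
Your proposal is correct, but it proves the lemma by a different route than the paper: the paper's proof is a two-line citation (it invokes Theorem 3 of \cite{chenzeng}, which identifies the conductance representation of the VRJP on trees via independent $IG(1,W)$ variables, together with Corollary 2 of \cite{SZT}, which gives the relation between that representation and the $\beta$-field after adding the independent $\Gamma(1/2,1)$ variable at the root), whereas you verify the claim directly by computing the Laplace transform of the constructed field and matching it against the defining formula \eqref{defbeta}. Your computation does go through: each $A_u$ enters $\sum_{i\in U}\lambda_i\beta_i$ only through the pair $(\tfrac{W}{2}\lambda_{\cev u}A_u,\tfrac{W}{2}\lambda_u/A_u)$ with the convention $\lambda_j=0$ for $j\notin U$, so independence gives a product over edges of terms $\frac{1}{\sqrt{1+\lambda_u}}\exp\left(-W\left(\sqrt{(1+\lambda_{\cev u})(1+\lambda_u)}-1\right)\right)$; the three edge types you list reproduce respectively the internal-edge terms (the factor $\tfrac12$ in \eqref{defbeta} compensating the double count over ordered pairs) and the two kinds of boundary terms, each non-root vertex of $U$ contributes exactly one prefactor $(1+\lambda_u)^{-1/2}$ through its parent edge, and the root contributes $(1+\lambda_o)^{-1/2}$ through $\E[e^{-\lambda_o\gamma}]$, so the product is exactly the right-hand side of \eqref{defbeta}; your Inverse Gaussian identity is the standard one and is correctly stated. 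Since both fields are positive, matching Laplace transforms on all finite $U$ and all $\lambda\in\R_+^U$ indeed identifies the laws. What your approach buys is a self-contained, elementary proof that makes transparent where the tree structure and the $IG$ Laplace transform enter; what the paper's approach buys is brevity and consistency with the framework of \cite{chenzeng} and \cite{SZT}, whose statements encapsulate precisely this computation.
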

\begin{proof}[Proof of Lemma \ref{construction}]
This is a direct consequence of Theorem 3 in \cite{chenzeng} and Corollary 2 in \cite{SZT}.
\end{proof}
From now on, when we work on a tree $V$, we always assume that, under $\nu_V^W$, the potential $(\beta_i)_{i\in V}$ is defined by (\ref{defbetagen}) and (\ref{tildage}). This coupling between the field $U$ and the potential $(\beta_i)_{i\in V}$ is very important in order to relate our questions regarding the martingale $(\psi_n(o))_{n\in\N}$ to  tractable questions about branching random walks. This allows us to apply techniques coming from the area of branching random walks in order to study ($\psi_n(o))_{n\in\N}$.

\subsection{$\beta$-potential and path expansions}
In this subsection, we explain how $\hat{G}$ can be interpreted as a sum over a set of paths. This representation of $\hat{G}$ will be very useful in the sequel of this paper. A path from $i$ to $j$ in the graph $(V,E)$ is a finite sequence $\sigma=(\sigma_0,\cdots,\sigma_m)$ in $V$ such that $\sigma_0=i$ and $\sigma_m=j$ and $\sigma_k\sim\sigma_{k+1}$ for every $k\in\{0,\cdots m-1\}$. Let us denote by $P^V_{i,j}$ the set of paths from $i$ to $j$ in $V$. Let us also introduce $\bar{P}_{i,j}^V$ the set of paths from $i$ to $j$ which never hit $j$ before the end of the path. More precisely, it is the set of paths $\sigma=(\sigma_0,\cdots,\sigma_m)$ such that $\sigma_0=i$, $\sigma_m=j$ and $\sigma_k\neq j$ for every $k\in\{0,\cdots, m-1\}$. For any path $\sigma=(\sigma_0,\cdots,\sigma_m)$, we denote its length by $|\sigma|=m$. For any path $\sigma$ in $V$ and for any $\beta\in\mathcal{D}_V^W$, let us write,
$$(2\beta)_{\sigma}=\prod\limits_{k=0}^{|\sigma|}(2\beta_{\sigma_k}), \hspace{2 cm}(2\beta)_{\sigma}^-=\prod\limits_{k=0}^{|\sigma|-1}(2\beta_{\sigma_k}).$$
Then, the following lemma stems directly from Proposition 6 in \cite{sabotzeng}:
\begin{lemi}[Proposition 6 in \cite{sabotzeng}]\label{pathexpo}
Let $(V,E)$ be any locally finite graph. Let $W>0$. Let $i,j\in V$. For any $\beta\in\mathcal{D}_V^W$,
$$\hat{G}(i,j)=\sum\limits_{\sigma\in P_{i,j}^V}\frac{W^{|\sigma|}}{(2\beta)_{\sigma}},\hspace{2 cm}\frac{\hat{G}(i,j)}{\hat{G}(i,i)}=\sum\limits_{\sigma\in\bar{P}_{j,i}^V}\frac{W^{|\sigma|}}{(2\beta)_{\sigma}^-}.$$
\end{lemi}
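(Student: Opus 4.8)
The plan is to establish the two identities first for the finite restrictions $\hat G_n$, via a Neumann series expansion, and then to let $n\to\infty$ using Proposition~\ref{thsabotzengchap3}. Fix $n$ and write $(H_\beta)_{V_n,V_n}=2D_n-WA_n$, where $D_n=\mathrm{diag}(\beta_i)_{i\in V_n}$ and $A_n=(\mathbf{1}\{i\sim j\})_{i,j\in V_n}$ is the adjacency matrix of $V_n$. Since $\beta\in\mathcal{D}_V^W$ the $1\times 1$ blocks $2\beta_i$ of $H_\beta$ are positive, so $D_n$ is invertible with positive diagonal and $\hat G_n=(I-T_n)^{-1}(2D_n)^{-1}$ with $T_n:=W(2D_n)^{-1}A_n$. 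The first point I would check is that $T_n$ has spectral radius $<1$: it is similar to the symmetric matrix $M_n=(2D_n)^{-1/2}(WA_n)(2D_n)^{-1/2}$, and $(H_\beta)_{V_n,V_n}>0$ gives $I-M_n>0$, while conjugating $2D_n-WA_n$ by the diagonal $\{\pm 1\}$-matrix coming from the bipartition of $V$ (both $\mathbb{Z}^d$ and trees are bipartite) turns it into $2D_n+WA_n>0$, hence $I+M_n>0$; thus all eigenvalues of $M_n$, and of $T_n$, lie in $(-1,1)$. Consequently $\hat G_n=\sum_{k\geq 0}T_n^k(2D_n)^{-1}$, and expanding the $k$-th summand over the sequences $i=\sigma_0\sim\sigma_1\sim\cdots\sim\sigma_k=j$ in $V_n$ gives, for all $i,j\in V_n$,
\[
\hat G_n(i,j)=\sum_{k\geq 0}\ \sum_{\substack{\sigma\in P_{i,j}^{V_n}\\ |\sigma|=k}}\ \Bigl(\prod_{\ell=0}^{k-1}\frac{W}{2\beta_{\sigma_\ell}}\Bigr)\frac{1}{2\beta_{\sigma_k}}=\sum_{\sigma\in P_{i,j}^{V_n}}\frac{W^{|\sigma|}}{(2\beta)_\sigma}.
\]

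Next I would pass to the limit. All summands above are nonnegative (again $\beta_i>0$), and $P_{i,j}^{V_n}\uparrow P_{i,j}^V$ as $n\to\infty$ since every finite path of $V$ eventually lies in some $V_n$; hence by monotone convergence the right-hand side increases to $\sum_{\sigma\in P_{i,j}^V}W^{|\sigma|}/(2\beta)_\sigma$, while by Proposition~\ref{thsabotzengchap3} the left-hand side increases to $\hat G(i,j)<\infty$. This yields the first identity. For the second, observe that $\hat G$ is symmetric (as an increasing limit of inverses of symmetric matrices), so by the first identity $\hat G(i,j)=\hat G(j,i)=\sum_{\sigma\in P_{j,i}^V}W^{|\sigma|}/(2\beta)_\sigma$. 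Then I would split each $\sigma\in P_{j,i}^V$ at its first visit to $i$: this is a bijection onto $\bar P_{j,i}^V\times P_{i,i}^V$, $\sigma\mapsto(\sigma',\sigma'')$, with $|\sigma|=|\sigma'|+|\sigma''|$ and, the junction vertex $i$ being counted exactly once in $\sigma$, $(2\beta)_\sigma=(2\beta)_{\sigma'}^-\,(2\beta)_{\sigma''}$. Summing the factorized weights (Tonelli, all terms $\geq 0$) and using the first identity for $\hat G(i,i)$,
\[
\hat G(i,j)=\Bigl(\sum_{\sigma'\in\bar P_{j,i}^V}\frac{W^{|\sigma'|}}{(2\beta)_{\sigma'}^-}\Bigr)\Bigl(\sum_{\sigma''\in P_{i,i}^V}\frac{W^{|\sigma''|}}{(2\beta)_{\sigma''}}\Bigr)=\Bigl(\sum_{\sigma'\in\bar P_{j,i}^V}\frac{W^{|\sigma'|}}{(2\beta)_{\sigma'}^-}\Bigr)\hat G(i,i),
\]
and since $\hat G(i,i)\geq (2\beta_i)^{-1}>0$ is finite by Proposition~\ref{thsabotzengchap3}, dividing gives the second identity.

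The only genuinely delicate ingredient is the convergence of the Neumann series for $\hat G_n$, that is, the spectral-radius bound; everything else is bookkeeping with path weights, monotone convergence and a first-passage decomposition. In fact the identity for $\hat G_n$ (hence, after the limiting step, the whole lemma) is precisely what Proposition~6 in \cite{sabotzeng} furnishes, so one may alternatively invoke it directly together with Proposition~\ref{thsabotzengchap3}.
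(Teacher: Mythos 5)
The paper gives no proof of Lemma \ref{pathexpo} at all: it is imported verbatim as Proposition 6 of \cite{sabotzeng} (with Proposition \ref{thsabotzengchap3} supplying the passage to infinite volume), so your self-contained derivation is by construction a different route, and it is the natural one: Neumann expansion of $\hat G_n=(I-T_n)^{-1}(2D_n)^{-1}$ at finite volume, monotone convergence along $P_{i,j}^{V_n}\uparrow P_{i,j}^V$ and $\hat G_n\uparrow\hat G$, then a first-passage decomposition at $i$ identifying $P_{j,i}^V$ with $\bar P_{j,i}^V\times P_{i,i}^V$. The bookkeeping is correct: the splitting vertex $i$ is counted exactly once, so $W^{|\sigma|}/(2\beta)_{\sigma}$ factors as $W^{|\sigma'|}/(2\beta)^-_{\sigma'}\cdot W^{|\sigma''|}/(2\beta)_{\sigma''}$, Tonelli applies to the nonnegative weights, and the division by $\hat G(i,i)\in(0,\infty)$ is licensed by Proposition \ref{thsabotzengchap3} (strictly speaking an almost sure statement, but the first identity, being an equality of increasing limits in $[0,+\infty]$, needs no finiteness at all).

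The one genuine weak spot is your lower spectral bound. You obtain $I+M_n>0$ by conjugating $2D_n-WA_n$ with the $\pm1$ diagonal matrix of a bipartition and justify this by noting that $\mathbb{Z}^d$ and trees are bipartite; but the lemma is stated for an arbitrary locally finite graph, which may contain odd cycles, so as written your argument does not prove the statement in its stated generality (even if every graph actually used in the paper is bipartite, as are their induced subgraphs). The fix is immediate and removes the detour: $M_n=(2D_n)^{-1/2}(WA_n)(2D_n)^{-1/2}$ is symmetric with nonnegative entries, hence $|x^\top M_n x|\le |x|^\top M_n |x|\le \lambda_{\max}(M_n)\,\|x\|^2$, so $\rho(M_n)=\lambda_{\max}(M_n)$ (equivalently, Perron--Frobenius for nonnegative symmetric matrices), and $(H_\beta)_{V_n,V_n}>0$, i.e. $I-M_n>0$, already gives $\rho(T_n)=\rho(M_n)<1$ with no bipartiteness assumption. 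With that single replacement your proof is complete and covers the lemma as stated.
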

In the special case of trees, we can mix this property with the construction given in subsection \ref{constructionsec} in order to obtain the following lemma.
\begin{lem}\label{lienG_U}
Let $V$ be a Galton-Watson tree with a root $o$  and an offspring law $\mu$ satisfying hypothesis $A_1$. Let us assume that $W\leq W_{c}(\mu)$. Then, $\P_{\mu,W}$-a.s, for every $i\in V$,
$$\frac{\hat{G}(o,i)}{\hat{G}(o,o)}=e^{U_i}.$$
\end{lem}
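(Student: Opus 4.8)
The plan is to combine the path expansion of $\hat{G}$ from Lemma~\ref{pathexpo} with the special structure of a tree, which forces any path from $o$ to $i$ that never returns to $i$ to be the unique self-avoiding path along the branch $[o,i]$. Indeed, on a tree there is a unique geodesic $o = x_0, x_1, \dots, x_k = i$ with $k = |i|$, and the only element of $\bar P_{i,o}^{V}$ is the reversed geodesic $(i = x_k, x_{k-1}, \dots, x_0 = o)$: any excursion off the branch would have to come back through the same vertex, and in particular any path leaving $o$ toward $i$ cannot revisit $o$ — wait, more carefully, I would use the second formula of Lemma~\ref{pathexpo} with the roles chosen so that the ``forbidden'' vertex kills all detours. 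Concretely, apply it as $\hat{G}(o,i)/\hat{G}(o,o) = \sum_{\sigma \in \bar P_{i,o}^{V}} W^{|\sigma|}/(2\beta)_\sigma^-$; on a tree the tree structure means a path from $i$ to $o$ which avoids $o$ until the last step is forced to be the direct geodesic, because the geodesic is a cut-path and any deviation into a subtree must be retraced, creating a repeated vertex that is fine \emph{except} that to actually reach $o$ the path is pinned to the branch. So $\bar P_{i,o}^{V} = \{(x_k,\dots,x_0)\}$, a single path of length $k=|i|$.

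Next I would evaluate the single surviving term. Along the geodesic the vertices are $x_0=o, x_1, \dots, x_k=i$, so $(2\beta)^-_\sigma = \prod_{j=1}^{k}(2\beta_{x_j})$ (the product runs over $\sigma_0,\dots,\sigma_{|\sigma|-1}$, i.e.\ over $x_k, x_{k-1}, \dots, x_1$, which is the same set $\{x_1,\dots,x_k\}$). Hence
$$\frac{\hat{G}(o,i)}{\hat{G}(o,o)} = \frac{W^{k}}{\prod_{j=1}^{k} 2\beta_{x_j}} = \prod_{j=1}^{k} \frac{W}{2\beta_{x_j}}.$$
Now I invoke the coupling fixed in subsection~\ref{constructionsec}: for a vertex $u \neq o$, formula~\eqref{defbetagen} gives $2\tilde\beta_u = W\big(\sum_{\cev v = u} A_v + 1/A_u\big)$, and since $u \neq o$ and $W \le W_c(\mu)$ implies $\psi(o)=0$ and hence (this is the content of the recurrence) $\gamma$ plays no role along the branch away from $o$, we have $\beta_u = \tilde\beta_u$ for $u \ne o$, so $2\beta_{x_j} = W\big(\sum_{\cev v = x_j} A_v + 1/A_{x_j}\big)$. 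At this point the telescoping identity for reversible nearest-neighbour Green functions on a tree — or equivalently a direct induction on $k$ — should give $\hat{G}(o,x_j)/\hat{G}(o,x_{j-1}) = A_{x_j}$, which multiplies up to $\hat{G}(o,i)/\hat{G}(o,o) = \prod_{j=1}^{k} A_{x_j} = \prod_{o < u \le i} A_u = e^{U_i}$ by the definition of the field $U$ in subsection~\ref{constructionsec}.

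The main obstacle, and the step I would spend the most care on, is the passage from the ratio formula $W/(2\beta_{x_j})$ to the single-edge factor $A_{x_j}$: this is not literally term-by-term but requires using the recursive/harmonic structure of $\hat G$ on the subtree hanging below $x_j$. The clean way is to prove by induction on $|i|$ the statement $\hat{G}(o,i)/\hat{G}(o,o) = e^{U_i}$ directly: the base case $|i|=0$ is trivial, and for the inductive step write $i$ with parent $\cev i$ and use that $\hat G(o,\cdot)$ restricted to the branch, being $H_\beta$-harmonic off $o$ with the correct boundary behaviour, satisfies the one-step relation $2\beta_i\, \hat G(o,i) = W\sum_{j \sim i} \hat G(o,j)$ away from $o$; solving this recursion down the tree and feeding in $2\beta_i = W(\sum_{\cev u = i} A_u + 1/A_i)$ yields $\hat G(o,i) = A_i\, \hat G(o,\cev i)$. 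One must also address convergence/finiteness: Proposition~\ref{thsabotzengchap3} guarantees $\hat G(o,i) < \infty$ a.s., and the recurrence hypothesis $W \le W_c(\mu)$ together with Proposition~\ref{pointcritloc} ensures the relevant series converge, so the manipulations are justified $\P_{\mu,W}$-a.s. Finally I would note that everything holds simultaneously for all $i \in V$ since $V$ is countable.
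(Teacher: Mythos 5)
There is a genuine gap, in fact two. First, your reduction of $\bar P_{i,o}^{V}$ to the single geodesic is false: the paths in Lemma~\ref{pathexpo} are \emph{walks} (the definition in the paper allows repeated vertices; the only constraint in $\bar P_{i,o}^{V}$ is that $o$ is not visited before the final step), so on a tree a path from $i$ to $o$ may backtrack and make arbitrarily long excursions into the subtrees hanging off the branch $[o,i]$, and $\bar P_{i,o}^{V}$ is infinite. Consequently the identity $\hat G(o,i)/\hat G(o,o)=\prod_{j=1}^{k}W/(2\beta_{x_j})$ is wrong (note also that $W/(2\beta_{x_j})=1/(\sum_{\cev u=x_j}A_u+1/A_{x_j})\neq A_{x_j}$ in general, which is why you could not make the term-by-term identification work). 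Second, your fallback induction does not close the gap: the relation $2\beta_i\,\hat G(o,i)=W\sum_{j\sim i}\hat G(o,j)$ for $i\neq o$ is correct, and $i\mapsto e^{U_i}$ is indeed a solution of this equation with value $1$ at $o$, but the equation has many positive solutions and does not determine $\hat G(o,\cdot)/\hat G(o,o)$ by itself. In fact $\hat G(o,i)/\hat G(o,o)=e^{U_i}P_{c,i}(\tau_o<+\infty)$ holds for \emph{all} $W$, and in the transient phase this is a second, different solution; so no argument that only uses the recursion (and would therefore prove the lemma for every $W$) can be correct, since the statement fails for $W>W_c(\mu)$.

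The missing idea is precisely how the hypothesis $W\leq W_c(\mu)$ enters, and you misplace it: the equality $\beta_u=\tilde\beta_u$ for $u\neq o$ is true by the very construction \eqref{tildage} and has nothing to do with $\psi(o)=0$, nor is the issue one of convergence of series. The paper's route is to recognize, after the telescoping of the factors $e^{U_{\sigma_{k+1}}-U_{\sigma_k}}$, that the sum over $\bar P_{i,o}^{V}$ of $\prod_k W e^{U_{\sigma_{k+1}}-U_{\sigma_k}}/(2\tilde\beta_{\sigma_k})$ is exactly the hitting probability $P_{c,i}(\tau_o<+\infty)$ for the random walk with conductances $c(i,\cev i)=We^{U_i+U_{\cev i}}$ (Proposition~\ref{mixturetree}), which yields $P_{c,i}(\tau_o<+\infty)=e^{-U_i}\,\hat G(o,i)/\hat G(o,o)$ via Lemma~\ref{pathexpo}; the hypothesis $W\leq W_c(\mu)$ is then used, through Propositions~\ref{mixturetree} and~\ref{transphasechap2}, to assert that this walk is recurrent so that the hitting probability equals $1$. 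This probabilistic identification (or an equivalent substitute selecting the correct solution of the recursion by a boundary condition at infinity) is the step your proposal lacks.
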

\begin{proof}[Proof of Lemma \ref{lienG_U}]
Let us assume that the $\beta$-potential is constructed as in subsection \ref{constructionsec}. Let us consider the Markov chain $(\tilde{Z}_k)_{k\in\N^*}$ on $V$ with conductances given by
$$c(i,\cev{i})=WA_i^{-1}\prod\limits_{o<u\leq i}  A_u^2=We^{U_i+U_{\cev{i}}}$$
for every $i\in V$. Actually, by Proposition \ref{mixturetree}, $\tilde{Z}$ is the discrete-time process associated with the VRJP. Let us remark that for every $i\in V$,
$$\pi_i:=\sum\limits_{j\sim i} c(i,j)= e^{2U_i}2\tilde{\beta}_i.$$
We denote by $P_{c,i}$ the probability measure associated with this Markov chain $\tilde{Z}$ starting from $i$ with random conductances $c$. Let us introduce the stopping time $$\tau_o=\inf\hspace{0.1 cm}\{n\in\N, \tilde{Z}_n=o\}.$$
If $\sigma$ is a path, we write $\{\tilde{Z}\sim \sigma\}$ to mean that $\tilde{Z}_0=\sigma_0,\tilde{Z}_1=\sigma_1$, etc. Then, it holds that $\P_{\mu,W}$-a.s, for every $i\in V$,
\begin{align}
P_{c,i}(\tau_o<+\infty)&=\sum\limits_{\sigma\in \bar{P}_{i,o}^V}P_{c,i}(\tilde{Z}\sim \sigma)\nonumber\\
&=\sum\limits_{\sigma\in \bar{P}_{i,o}^V}\prod\limits_{k=0}^{|\sigma|-1}\frac{W e^{U_{\sigma_k}+U_{\sigma_{k+1}}}}{\pi_{\sigma_k}}\nonumber\\
&=\sum\limits_{\sigma\in \bar{P}_{i,o}^V}\prod\limits_{k=0}^{|\sigma|-1}\frac{We^{U_{\sigma_{k+1}}-U_{\sigma_k}}}{2\tilde{\beta}_{\sigma_k}}.\label{telescop}
\end{align}
There is a telescoping product in (\ref{telescop}). Consequently, we deduce that $P_{\mu,W}$-a.s, for every $i\in V$,
\begin{align}
P_{c,i}(\tau_o<+\infty)&=e^{-U_i}\sum\limits_{\sigma\in \bar{P}_{i,o}^V}\prod\limits_{k=0}^{|\sigma|-1}\frac{W}{2\tilde{\beta}_{\sigma_k}}.\label{telescop2}
\end{align}
In identity (\ref{telescop2}), remark that $\sigma_k$ is always different from o. Therefore, $\tilde{\beta}$ can be replaced by $\beta$ and we obtain that $P_{\mu,W}$-a.s, for every $i\in V$,
\begin{align}
P_{c,i}(\tau_o<+\infty)&=e^{-U_i}\sum\limits_{\sigma\in \bar{P}_{i,o}^V}\prod\limits_{k=0}^{|\sigma|-1}\frac{W}{2\beta_{\sigma_k}}.\label{telescop25}
\end{align}
In (\ref{telescop25}), one can observe the same quantity as in Lemma \ref{pathexpo}. Therefore,
$P_{\mu,W}$-a.s, for every $i\in V$,
\begin{align}
P_{c,i}(\tau_o<+\infty)&=e^{-U_i}\frac{\hat{G}(o,i)}{\hat{G}(o,o)}.\label{telescop3}
\end{align}
However, we assumed $W\leq W_c(\mu)$. Thus, by Propositions  \ref{mixturetree} and \ref{transphasechap2}, we know that
$P_{c,i}(\tau_o<+\infty)=1$, $\P_{\mu,W}$-a.s. Together with (\ref{telescop3}), this concludes the proof.
\end{proof}
\subsection{Warm-up about branching random walks}\label{subsecbr}
In this subsection, we recall the most important facts about one-dimensionnal branching random walks. Indeed, it is a very important tool in this article. One can refer to \cite{shiflour} for more information on this topic. We consider a point process $\mathcal{L}:=\{\rho_i,1\leq i\leq N\}$ such that $N$ takes values in $\N$ and each point $\rho_i$ is in $\R$. At time $0$, there is a unique ancestor called the root $o$. We define $S(o)=0$. At time $n$, each individual $u$ generates independently a point process $\mathcal{L}_u:=\{\rho_i^u,1\leq i\leq N_u\}$ with the same law as $\mathcal{L}$. Each point in $\mathcal{L}_u$ stands for a child of $u$. The positions of the children of $u$ are given by the point process $\{\rho_i^u+ S(u),1\leq i\leq N_u\}$. The children of  individuals of the $n$-th generation form the $n+1$-th generation. In this way, we get an underlying genealogical Galton-Watson tree $V$ with $o$ as a root. For every $u\in V$, we denote the position of $u$ by $S(u)$. The set $\{(u,S(u)),u\in V\}$ is called a branching random walk. Recall that $|u|$ stands for the generation of $u\in V$. 

Throughout this subsection, we assume there exists $\delta>0$ such that
\begin{align}
\E\left[\left(\sum\limits_{|u|=1} 1\right)^{1+\delta} \right]<+\infty. \label{petitsmoments}
\end{align}
Moreover, we assume that for every $t\in\R$,
\begin{align}
\E\left[\sum\limits_{|u|=1}e^{tS(u)}\right]<+\infty.\label{fonctionbiendef}
\end{align}
Let us introduce the Laplace transform of $\mathcal{L}$ which is defined as
$$\begin{array}{ccccc}
f & : & \R & \to & \R \\
 & & t & \mapsto & \ln\left( \E\left[\sum\limits_{|u|=1}e^{-tS(u)}\right]\right) \\
\end{array}.$$
Let us also assume that
\begin{align}\label{boundary}
f(0)>0,\hspace{1 cm} f(1)=f'(1)=0.
\end{align}
For every $n\in\N$ and for every $\beta>1$, let us define,
$$\mathcal{W}_n:=\sum\limits_{|u| =n}e^{-S(u)}, \hspace{1 cm} \mathcal{W}_{n,\beta}=\sum\limits_{|u| =n}e^{-\beta S(u)}.$$
In \cite{Hushipolymers}, Hu and Shi proved the following results:
\begin{propi}[Theorem 1.4 of \cite{Hushipolymers}]\label{hushi} Assume hypotheses (\ref{petitsmoments}), (\ref{fonctionbiendef}) and (\ref{boundary}) and let $\beta >1$. Conditionally on the system's survival, we have
\begin{align}
\underset{n\rightarrow+\infty}\limsup\hspace{0.1 cm} \frac{\ln\left( \mathcal{W}_{n,\beta}\right)}{\ln(n)}=-\frac{\beta}{2}\hspace{1 cm}a.s,& \label{limsup}\\ 
\underset{n\rightarrow+\infty}\liminf\hspace{0.1 cm} \frac{\ln\left( \mathcal{W}_{n,\beta}\right)}{\ln(n)}=-\frac{3\beta}{2}\hspace{1 cm}a.s.&\label{liminf}
\end{align}
\end{propi}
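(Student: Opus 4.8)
The plan is to reduce the whole statement to two ingredients: the known almost sure fluctuations of the minimal displacement of a boundary branching random walk, and a sub-polynomial bound on a suitably renormalised version of $\mathcal{W}_{n,\beta}$. Write $M_n:=\min_{|u|=n}S(u)$ and, for $\beta>1$,
$$R_{n,\beta}:=\sum_{|u|=n}e^{-\beta(S(u)-M_n)}=e^{\beta M_n}\,\mathcal{W}_{n,\beta}\ \ge\ 1 ,$$
where $R_{n,\beta}\ge 1$ comes from keeping only the minimal particle; this also gives $\mathcal{W}_{n,\beta}\ge e^{-\beta M_n}$, hence
$$-\beta M_n\ \le\ \ln\mathcal{W}_{n,\beta}\ =\ -\beta M_n+\ln R_{n,\beta}\ .$$
First I would record the fact (due to Hu and Shi, with the order of magnitude going back to Addario-Berry--Reed) that, conditionally on survival,
$$\liminf_{n\to\infty}\frac{M_n}{\ln n}=\frac12,\qquad \limsup_{n\to\infty}\frac{M_n}{\ln n}=\frac32\qquad\text{a.s.}$$
Granting in addition that, for $\beta>1$, $\ln R_{n,\beta}=o(\ln n)$ almost surely, so that $\ln\mathcal{W}_{n,\beta}=-\beta M_n+o(\ln n)$, the two identities follow at once: $\limsup_n\ln\mathcal{W}_{n,\beta}/\ln n=-\beta\liminf_n M_n/\ln n=-\beta/2$, which is (\ref{limsup}), and $\liminf_n\ln\mathcal{W}_{n,\beta}/\ln n=-\beta\limsup_n M_n/\ln n=-3\beta/2$, which is (\ref{liminf}).

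For the fluctuations of $M_n$, the ``eventually'' bounds $M_n\ge(\tfrac12-\varepsilon)\ln n$ and $M_n\le(\tfrac32+\varepsilon)\ln n$ for all large $n$ would come from first-moment estimates and ballot-type (barrier) estimates for the associated one-dimensional random walk, via the many-to-one lemma, together with Borel--Cantelli along geometric subsequences; the fact that both rates $\tfrac12$ and $\tfrac32$ are genuinely attained along random subsequences is the delicate part and is exactly what Hu and Shi establish -- I would simply quote it, noting that it is intertwined with their analysis of the critical additive martingale $\mathcal{W}_n=\sum_{|u|=n}e^{-S(u)}$. Observe that the lower bounds on $\mathcal{W}_{n,\beta}$ required for (\ref{limsup}) and (\ref{liminf}) use only $\mathcal{W}_{n,\beta}\ge e^{-\beta M_n}$ and the two upper fluctuation bounds on $M_n$, so no control of $R_{n,\beta}$ is needed for them.

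The hard part is the sub-polynomiality of $R_{n,\beta}$ for $\beta>1$: informally, the renormalised sum is dominated by the $O(1)$ particles lying within distance $O(1)$ of the minimum, the contribution of the higher particles being summable precisely because $\beta>1$. To make this precise I would prove a uniform moment bound $\sup_n \E\left[R_{n,\beta}^{\,k}\right]\le C_{k,\beta}<\infty$ for every integer $k\ge 1$, via a spine change of measure (Lyons' decomposition) that conditions the spine to stay above the curve $\tfrac32\ln\lfloor\cdot\rfloor$ up to lower-order corrections, with the resulting expectations handled by the same ballot estimates as above; the potentially heavy contribution of atypically low minima is compensated by the factor $e^{\beta M_n}$ built into $R_{n,\beta}$. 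Given such bounds, $\P\left(R_{n,\beta}>n^{\varepsilon}\right)\le C_{k,\beta}\,n^{-k\varepsilon}$ for every $k$, and a union bound over the $O(N)$ indices in each dyadic block $[N,2N]$ followed by Borel--Cantelli yields $\ln R_{n,\beta}/\ln n\to 0$ a.s.\ for every $\varepsilon>0$. A softer alternative for the distributional statement is to invoke the convergence in law of the shifted extremal process $\sum_{|u|=n}\delta_{S(u)-M_n}$ to a decorated Poisson point process, whose intensity grows like $e^{z}$ at $+\infty$, so that $\int e^{-\beta z}\,dz<\infty$ iff $\beta>1$ and $(R_{n,\beta})_n$ is tight; but upgrading tightness to the almost sure $o(\ln n)$ estimate still requires a quantitative tail bound of the above kind. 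I expect this uniform moment control over the full range $\beta>1$, in particular near $\beta=1$, to be the main technical obstacle.
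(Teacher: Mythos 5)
This proposition is not proved in the paper at all: it is imported verbatim as Theorem 1.4 of \cite{Hushipolymers}, so the only comparison to be made is between your sketch and Hu--Shi's actual argument. Your reduction of the lower bounds is fine: $\mathcal{W}_{n,\beta}\geq e^{-\beta M_n}$ together with $\liminf M_n/\ln n=1/2$ and $\limsup M_n/\ln n=3/2$ (Theorem 1.2 of the same Hu--Shi paper) immediately gives the $\geq$ halves of (\ref{limsup}) and (\ref{liminf}). The problem is the other half, which you make rest entirely on the claim $\ln R_{n,\beta}=o(\ln n)$ a.s., itself to be deduced from $\sup_n\E\left[R_{n,\beta}^k\right]<\infty$ for every $k$. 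That uniform all-moments bound is false under the hypotheses of the proposition. Hypothesis (\ref{petitsmoments}) only assumes a moment of order $1+\delta$ for the offspring number, and (\ref{fonctionbiendef}) does not improve this: take a one-step point process with $N$ children, $N$ having infinite second moment, each child displaced by $-1$ with probability $p$ and by $+b$ otherwise, with $(p,b,\E[N])$ tuned so that (\ref{boundary}) holds. Already at $n=1$ (and, via the size-biased offspring law of the ancestor of the minimal particle, uniformly in $n$) the number of particles within distance $1$ of the minimum is of order $pN$ on the event $\{N\text{ large}\}$, so $R_{n,\beta}\geq c\,N$ with non-negligible probability and $\E\left[R_{n,\beta}^2\right]=+\infty$. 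So the Chernoff-type bound $\P\left(R_{n,\beta}>n^{\varepsilon}\right)\leq C_{k,\beta}n^{-k\varepsilon}$ is unavailable, and your Borel--Cantelli step collapses; tightness of $R_{n,\beta}$ (via convergence of the shifted extremal process) is, as you note yourself, far from the required almost sure statement.

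More structurally, the statement ``$R_{n,\beta}=n^{o(1)}$ for \emph{all} large $n$ a.s.'', i.e. that at every time, including the exceptional times when $M_n\approx\frac12\ln n$, only $n^{o(1)}$ mass sits within $O(\ln n)$ of the minimum, is essentially equivalent to the theorem itself once the $M_n$ fluctuations are known; so the reduction defers rather than performs the real work. Hu and Shi avoid high moments altogether precisely because they cannot exist in this generality: their proof controls small fractional moments $\E\left[\mathcal{W}_{n,\beta}^{r}\right]$ for $r<1/\beta$ (this is their Theorem 1.6, quoted here as Proposition \ref{petitsmomentsbranch}) by truncation on barrier events and a spinal change of measure compatible with the $1+\delta$ assumption, and then pass to the almost sure bounds from there. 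If you want a self-contained proof you would need to replace your uniform moment bound by estimates of that type; as written, the upper-bound halves of (\ref{limsup}) and (\ref{liminf}) are not established.
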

\begin{propi}[Theorem 1.6 in \cite{Hushipolymers}]\label{petitsmomentsbranch}
Assume hypotheses (\ref{petitsmoments}), (\ref{fonctionbiendef}) and (\ref{boundary}) and let $\beta >1$. For any $r\in]0,1/\beta[$,
 $$\E\left[ \mathcal{W}_{n,\beta}^r\right]=n^{-3r\beta/2+o(1)}.$$ 
\end{propi}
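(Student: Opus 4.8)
\emph{Proof proposal.} Since this statement is Theorem 1.6 of \cite{Hushipolymers} one may simply invoke it; the plan below sketches how the argument can be organised, the two bounds being of rather different natures. The \textbf{lower bound} is essentially free from Proposition \ref{hushi}. Conditionally on the survival of the branching random walk, $\liminf_{n}\frac{\ln(\mathcal{W}_{n,\beta})}{\ln n}=-\tfrac{3\beta}{2}$ a.s., so for any fixed $\varepsilon>0$ the event $A_n:=\{\mathcal{W}_{n,\beta}\ge n^{-3\beta/2-\varepsilon}\}$ satisfies $\mathbf{1}_{A_n}\mathbf{1}_{\mathrm{surv}}\to\mathbf{1}_{\mathrm{surv}}$ a.s., whence by bounded convergence $\P(A_n)\ge \P(A_n\cap\{\mathrm{surv}\})\ge\tfrac12\,\P(\mathrm{surv})$ for $n$ large; since $f(0)>0$ the walk is supercritical, so $\P(\mathrm{surv})>0$, and therefore
$$\E\big[\mathcal{W}_{n,\beta}^{\,r}\big]\ \ge\ n^{-r(3\beta/2+\varepsilon)}\,\P(A_n)\ \ge\ c(\varepsilon)\,n^{-3r\beta/2-r\varepsilon}.$$
Taking $\varepsilon\downarrow 0$ gives $\liminf_n\frac{\ln \E[\mathcal{W}_{n,\beta}^{\,r}]}{\ln n}\ge-\tfrac{3r\beta}{2}$; this half uses nothing about $r$ beyond $r>0$.

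The content is the matching \textbf{upper bound}, where the hypothesis $r<1/\beta$ is essential. I would use the layer-cake identity
$$\E\big[\mathcal{W}_{n,\beta}^{\,r}\big]\ =\ r\int_0^{\infty} t^{\,r-1}\,\P\big(\mathcal{W}_{n,\beta}\ge t\big)\,dt,$$
split the integral at $t_0:=n^{-3\beta/2}$, bound $\int_0^{t_0}rt^{r-1}\,dt=t_0^{\,r}=n^{-3r\beta/2}$ on the lower part, and on the range $t\in[t_0,1]$ plug in a tail estimate of the form
$$\P\big(\mathcal{W}_{n,\beta}\ge t\big)\ \le\ n^{o(1)}\,n^{-3/2}\,t^{-1/\beta}\qquad(t_0\le t\le 1).$$
The remaining integral is then $n^{o(1)}\,n^{-3/2}\int_{t_0}^1 t^{\,r-1-1/\beta}\,dt$; because $r<1/\beta$ the exponent $r-1-1/\beta$ is strictly below $-1$, so the integral is controlled by its lower endpoint and is at most $n^{o(1)}\,t_0^{\,r-1/\beta}/(1/\beta-r)$, giving $n^{o(1)}\,n^{-3/2}\,n^{-3r\beta/2+3/2}=n^{-3r\beta/2+o(1)}$. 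Together with the lower bound this yields $\E[\mathcal{W}_{n,\beta}^{\,r}]=n^{-3r\beta/2+o(1)}$. The blow-up of the constant $1/(1/\beta-r)$ as $r\uparrow1/\beta$ is exactly the phenomenon behind $\limsup_n\frac{\ln(\mathcal{W}_{n,\beta})}{\ln n}=-\beta/2$ in Proposition \ref{hushi}.

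Everything thus reduces to the tail bound $\P(\mathcal{W}_{n,\beta}\ge t)\le n^{o(1)}n^{-3/2}t^{-1/\beta}$, and this is the step I expect to be the main obstacle. The heuristic is that, since $\beta>1$, $\mathcal{W}_{n,\beta}$ is comparable to $e^{-\beta M_n}$ with $M_n:=\min_{|u|=n}S(u)$, and that in the boundary case (\ref{boundary}) the left tail of the minimum satisfies $\P(M_n\le\tfrac32\ln n-y)\le C(1+y)e^{-y}$ for $0\le y\le\tfrac32\ln n$, a now-standard estimate obtained from the many-to-one lemma (hypotheses (\ref{petitsmoments}) and (\ref{fonctionbiendef}) ensuring that the associated spine walk is centered with finite variance and controlled tails) together with ballot-type estimates for that walk. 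Since $\{\mathcal{W}_{n,\beta}\ge t\}$ essentially forces $M_n\le-\tfrac1\beta\ln t+O(1)$, this produces the right exponent. The delicate point is that $\mathcal{W}_{n,\beta}$ could also be large because many particles sit moderately below the typical minimum rather than because one particle is very low, so one must also control the number of near-minimal particles; I would do this by the usual barrier/peeling device, writing $\P(\mathcal{W}_{n,\beta}\ge t)\le\P(M_n\le a_n)+\P(\mathcal{W}_{n,\beta}\mathbf{1}\{M_n>a_n\}\ge t/2)$, bounding the second term via the comparison $\mathcal{W}_{n,\beta}\le e^{-(\beta-1)M_n}\,\mathcal{W}_{n,1}$ (valid since $S(u)\ge M_n$) together with a uniform-in-$n$ tail bound for the critical additive martingale $\mathcal{W}_{n,1}=\sum_{|u|=n}e^{-S(u)}$, optimising over the barrier level $a_n$, and dealing with the regime of very negative $M_n$ by a direct first-moment computation. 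Keeping all the polylogarithmic losses inside $n^{o(1)}$ and establishing the uniform tail control of $\mathcal{W}_{n,1}$ are the technical crux.
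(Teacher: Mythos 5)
The paper does not actually prove this proposition: it is imported wholesale as Theorem~1.6 of \cite{Hushipolymers}, so your opening observation --- that one may simply invoke that theorem --- coincides exactly with what the paper does, and at that level your proposal matches the paper. The same remark applies to Proposition~\ref{hushi}, which is likewise only quoted (Theorem~1.4 of \cite{Hushipolymers}), so there is no circularity in using it for the lower bound: on survival, $\liminf_n \ln(\mathcal{W}_{n,\beta})/\ln n=-3\beta/2$ does give $\mathcal{W}_{n,\beta}\ge n^{-3\beta/2-\varepsilon}$ eventually a.s., and your bounded-convergence argument for the lower bound is correct and indeed needs only $r>0$.

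If, however, your sketch is meant as a self-contained proof of the statement rather than a pointer to \cite{Hushipolymers}, it has a genuine gap precisely where you flag it: the tail estimate $\P(\mathcal{W}_{n,\beta}\ge t)\le n^{o(1)}n^{-3/2}t^{-1/\beta}$ \emph{is} the theorem, in the sense that all of the work of Hu and Shi (left tail of the minimum $M_n$ via many-to-one and ballot estimates, control of the number of near-minimal particles, uniform control of the critical sum $\mathcal{W}_{n,1}$) is concentrated in it, and your discussion of it remains heuristic; nothing in the present paper supplies these ingredients, since the branching-random-walk inputs used here are exactly Propositions~\ref{hushi}, \ref{petitsmomentsbranch} and the results of \cite{FHS}, all cited without proof. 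A second, smaller omission: your layer-cake computation stops the integral at $t=1$, but the range $t\ge 1$ must also contribute at most $n^{-3r\beta/2+o(1)}$, which a uniform-in-$n$ Markov bound such as $\P(\mathcal{W}_{n,\beta}\ge t)\le \P(\mathcal{W}_{n,1}\ge t^{1/\beta})\le t^{-1/\beta}$ does not give (it yields an $n$-independent constant); you need the $n^{-3/2+o(1)}$ factor on that range as well, e.g.\ by extending your tail bound there via $\mathcal{W}_{n,\beta}\le e^{-(\beta-1)M_n}\mathcal{W}_{n,1}$, after which the exponent $r-1-1/\beta<-1$ argument closes as you say (this is also where $r<1/\beta$ is used, consistently with the blow-up you point out as $r\uparrow 1/\beta$). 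In short: as an appeal to the cited theorem your proposal is exactly the paper's route; as an independent proof it is a correct skeleton of the Hu--Shi argument with the decisive tail estimate left unproved.
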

In many situations, hypothesis (\ref{boundary}) is not satisfied. However, in most cases, we can transform the branching random walk in order to be reduced to hypothesis (\ref{boundary}). Indeed, if there exists $t^*>0$ such that $t^*f'(t^*)=f(t^*)$, then $(\tilde{S}(u))_{u\in V}:=(t^*S(u)+f(t^*)|u|)_{u\in V}$ is a branching random walk satisfying (\ref{boundary}). However, one still has to check that such a $t^*>0$ does exist.
\begin{propi}[Proposition 7.2, Chapter 3 in \cite{Jaffuelphd}]\label{texist}
Let us assume that for every $M\in\R$, 
\begin{align}
 \P(\mathcal{L}(]-\infty,-M])\neq\emptyset)>0 .\nonumber
\end{align}
Then, there exists $t^*>0$ such that $t^*f'(t^*)=f(t^*)$.
\end{propi}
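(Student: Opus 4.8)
The plan is to recast the equation $t^{*}f'(t^{*})=f(t^{*})$ as the first-order condition of an auxiliary function. By \eqref{fonctionbiendef}, $f$ is finite on all of $\R$, and a standard domination argument (for $t$ in a compact interval one bounds $\sum_{|u|=1}|S(u)|e^{-tS(u)}$ by a constant times $\sum_{|u|=1}e^{-(t-\varepsilon)S(u)}+\sum_{|u|=1}e^{-(t+\varepsilon)S(u)}$, which is integrable by \eqref{fonctionbiendef}) shows $f$ is $C^{1}$ on $\R$; by Hölder's inequality $f$ is moreover convex. Set
\[
g:\ t\in(0,+\infty)\ \longmapsto\ \frac{f(t)}{t}.
\]
Then $g$ is $C^{1}$ with $g'(t)=\big(tf'(t)-f(t)\big)/t^{2}$, so a point $t^{*}>0$ satisfies $t^{*}f'(t^{*})=f(t^{*})$ if and only if $g'(t^{*})=0$. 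Hence it is enough to produce an interior minimizer of $g$ on $(0,+\infty)$.

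Next I would analyse the behaviour of $g$ at the two endpoints. Near $0$, since $f(0)=\ln\E\big[\sum_{|u|=1}1\big]>0$ (supercriticality of the underlying Galton--Watson tree, part of the standing setting; note the claim genuinely fails without it, because for convex $f$ with $f(0)\le 0$ the map $g$ is nondecreasing on $(0,+\infty)$), continuity of $f$ gives $g(t)=f(t)/t\to+\infty$ as $t\to 0^{+}$. At infinity, the hypothesis on the left tail of $\mathcal{L}$ enters, and this is the only step that is not pure formalism. Fix $M\in\R$ and set $c_{M}:=\P\big(\mathcal{L}(]-\infty,-M])\neq\emptyset\big)>0$. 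On the event $\{\mathcal{L}(]-\infty,-M])\neq\emptyset\}$ there is a first-generation individual $u$ with $S(u)\le -M$, so for every $t>0$,
\[
\sum_{|u|=1}e^{-tS(u)}\ \ge\ e^{tM}\,\textbf{1}\{\mathcal{L}(]-\infty,-M])\neq\emptyset\}.
\]
Taking expectations and then logarithms gives $f(t)\ge tM+\ln c_{M}$, hence $\liminf_{t\to+\infty}g(t)\ge M$; letting $M\to+\infty$ yields $g(t)\to+\infty$ as $t\to+\infty$.

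Finally I would conclude by a compactness argument. Since $g$ is continuous on the open interval $(0,+\infty)$ and tends to $+\infty$ at both ends, for any fixed $t_{0}>0$ the sublevel set $\{t>0:g(t)\le g(t_{0})\}$ is a nonempty compact subset of $(0,+\infty)$; $g$ attains its minimum over this set at some point $t^{*}$, which is then a global minimizer of $g$ on $(0,+\infty)$, and in particular $t^{*}>0$. Being an interior minimum of the differentiable function $g$, it satisfies $g'(t^{*})=0$, that is $t^{*}f'(t^{*})=f(t^{*})$, which is the assertion. The main (and essentially only) obstacle is the growth estimate $f(t)/t\to+\infty$ as $t\to+\infty$, which is exactly where the left-tail assumption is used; everything else is elementary calculus. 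Uniqueness of $t^{*}$ is not claimed here, but would follow at once from the strict convexity of $f$.
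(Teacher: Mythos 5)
Your argument is correct, and it is worth noting that the paper gives no proof of Proposition \ref{texist} at all: it is quoted from \cite{Jaffuelphd}, with only a warning that the statement there had to be corrected. So what you provide is a genuinely self-contained alternative to an external citation. Your route --- recasting $t^*f'(t^*)=f(t^*)$ as $g'(t^*)=0$ for $g(t)=f(t)/t$, checking $g\to+\infty$ at $0^+$ (from $f(0)>0$) and at $+\infty$ (from the left-tail hypothesis via $\sum_{|u|=1}e^{-tS(u)}\geq e^{tM}\textbf{1}\{\mathcal{L}(]-\infty,-M])\neq\emptyset\}$, hence $f(t)\geq tM+\ln c_M$), and extracting an interior minimizer by compactness of a sublevel set --- is sound; the domination you invoke for differentiating under the expectation is exactly the standard one permitted by \eqref{fonctionbiendef}, and the tail estimate is the one place where the hypothesis of the proposition is genuinely used.

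Two remarks on hypotheses. First, you correctly flag that the statement as printed is false without $f(0)>0$: a strictly convex $f$ with $f(0)\leq 0$ (e.g.\ a single Gaussian displacement per generation) satisfies the left-tail hypothesis yet admits no $t^*>0$, since then $tf'(t)-f(t)=\int_0^t sf''(s)\,ds-f(0)>0$ for all $t>0$; your parenthetical via ``$g$ nondecreasing'' is a little weaker than this (nondecreasing alone would not exclude a critical point), but the conclusion is right, and this defect of the quoted statement is precisely consistent with the paper's own remark that the result in \cite{Jaffuelphd} is wrongly stated. Second, in the only place the paper uses Proposition \ref{texist} (Step 1 of the proof of Theorem \ref{moments}, together with Proposition \ref{tunique}), one has $f_{m,W}(0)=\ln m>0$ and strict convexity, so your added assumption is harmless there; uniqueness, which you rightly leave aside, is handled separately by Proposition \ref{tunique}.
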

\begin{rem}
Be careful when you look at reference \cite{Jaffuelphd}. The result is wrongly stated but the proof (of the corrected statement) is correct.
\end{rem}
Moreover, this is possible to know the sign of $f(t^*)$ and whether $t^*$ is unique or not.
\begin{prop} \label{tunique}
Let us assume that $f(0)>0$ and that there exists $t^*>0$ such that $t^*f'(t^*)=f(t^*)$. We assume also that $f$ is strictly convex and that there exists a point $t_{min}$ such that $f$ is strictly decreasing on $[0,t_{min}]$ and strictly increasing on $[t_{min},+\infty[$. Then $t^*$ is the unique solution in $\R_+^*$ of the equation $tf'(t)=f(t)$ and
$$\sgn(f(t^*))=\sgn(f(t_{min})).$$
Moreover, $t^*<t_{min}$ if $f(t_{min})<0$ and $t^*>t_{min}$ if $f(t_{min})>0$.
\end{prop}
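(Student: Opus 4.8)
The plan is to reduce everything to one elementary observation: $tf'(t)=f(t)$ holds exactly when the tangent line to the graph of $f$ at $t$ passes through the origin (indeed that tangent has value $f(t)-tf'(t)$ at the abscissa $0$). Equivalently, introduce $g(t):=f(t)/t$ on $(0,+\infty)$; then $g'(t)=\bigl(tf'(t)-f(t)\bigr)/t^{2}$, so the solutions of $tf'(t)=f(t)$ are precisely the critical points of $g$. This reformulation is the organising idea for both the uniqueness assertion and the sign computation, and it also lets me read off the position of $t^*$ relative to $t_{min}$.

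First I would prove uniqueness. Fix $t^{*}>0$ with $t^{*}f'(t^{*})=f(t^{*})$ and set $L(t):=f'(t^{*})\,t$, the line through the origin with slope $f'(t^{*})$; by the observation above, $L$ is the tangent to $f$ at $t^{*}$. Since $f$ is strictly convex and differentiable, its graph lies strictly above each of its tangent lines away from the point of tangency, so $f(t)>L(t)=f'(t^{*})\,t$ for every $t\neq t^{*}$ in the domain. Dividing by $t>0$ gives $g(t)>f'(t^{*})=g(t^{*})$ for all $t\neq t^{*}$ in $(0,+\infty)$. Hence $t^{*}$ is the unique global minimiser of $g$ on $(0,+\infty)$, in particular the unique critical point of $g$, hence the unique solution of $tf'(t)=f(t)$ in $\R_{+}^{*}$. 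I also record that strict convexity together with differentiability makes $f'$ continuous and strictly increasing, which I use in the last step.

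Next I would locate $t^{*}$ and compute $\sgn f(t^{*})$. Since $f$ is strictly decreasing on $[0,t_{min}]$ and strictly increasing on $[t_{min},+\infty[$ while $f'$ is continuous and strictly increasing, one has $f'(t_{min})=0$, whence $\sgn f'(t^{*})=\sgn(t^{*}-t_{min})$; and because $f(t^{*})=t^{*}f'(t^{*})$ with $t^{*}>0$, also $\sgn f(t^{*})=\sgn f'(t^{*})$. Now I split on $\sgn f(t_{min})$. If $f(t_{min})>0$, then $f\geq f(t_{min})>0$ everywhere, since $t_{min}$ is the global minimum of $f$; so $f(t^{*})>0$, giving $t^{*}>t_{min}$ and $\sgn f(t^{*})=\sgn f(t_{min})$. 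If $f(t_{min})=0$, then $t_{min}$ itself solves $tf'(t)=f(t)$ (both sides vanish), so $t^{*}=t_{min}$ by uniqueness and $f(t^{*})=0$. If $f(t_{min})<0$, then $g(t_{min})=f(t_{min})/t_{min}<0$, and since $t^{*}$ minimises $g$ we get $g(t^{*})\leq g(t_{min})<0$, i.e. $f'(t^{*})=g(t^{*})<0$; hence $t^{*}<t_{min}$ and $f(t^{*})=t^{*}f'(t^{*})<0$, so again $\sgn f(t^{*})=\sgn f(t_{min})$. This yields all the assertions.

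I do not expect a genuine obstacle here. The two points that deserve care are that the gap between a strictly convex function and any of its tangent lines is \emph{global}, not merely local — this is exactly what upgrades $t^{*}$ to a global minimiser of $g$ and thereby forces uniqueness — and that the trichotomy on $\sgn f(t_{min})$ must be made exhaustive, which it is once the borderline case $f(t_{min})=0$ is disposed of through the uniqueness already proved.
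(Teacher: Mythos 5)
Your proof is correct, but it runs along a different line from the paper's. The paper works with $\Phi(t)=tf'(t)-f(t)$ directly: it differentiates, $\Phi'(t)=tf''(t)>0$, so $\Phi$ is strictly increasing, which gives uniqueness; then it locates $t^*$ by comparing the signs of $\Phi(0)=-f(0)<0$ and $\Phi(t_{min})=-f(t_{min})$. You instead read the equation $tf'(t)=f(t)$ as ``the tangent at $t$ passes through the origin'', equivalently as criticality of $g(t)=f(t)/t$, and you use the supporting-line inequality of strict convexity to show that any solution is the strict global minimiser of $g$; the sign and location statements then follow from $g(t^*)=f'(t^*)$, the monotonicity of $f'$ and $f'(t_{min})=0$. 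What your route buys is that it only uses differentiability and strict convexity, whereas the paper's computation $\Phi'(t)=tf''(t)$ tacitly assumes $f$ is twice differentiable (harmless in the application, where $f$ is a smooth log-Laplace transform, but not among the stated hypotheses); you also treat the borderline case $f(t_{min})=0$ explicitly, which the paper leaves aside. One phrasing in your uniqueness step deserves a touch-up: ``$t^*$ is the unique global minimiser of $g$, in particular the unique critical point of $g$'' is not a valid implication in general (a unique global minimiser does not preclude other critical points). The intended and correct argument is the symmetric one: if $s\neq t^*$ were another solution, the tangent-line inequality applied at $s$ gives $g(t^*)>g(s)$, while applied at $t^*$ it gives $g(s)>g(t^*)$, a contradiction; with that one-line repair, which uses only what you have already set up, the proof is complete.
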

\begin{proof}[Proof of Proposition \ref{tunique}]
Let us introduce the function $\Phi:t\mapsto tf'(t)-f(t)$. As $f$ is stricly convex, for every $t\in\R_+^*$,  $\Phi'(t)=tf''(t)>0$. Therefore, $\Phi$ is stricly increasing on $\R_+$. Thus, $t^*$ must be unique. Moreover, $\Phi(t_{min})=tf'(t_{min})-f(t_{min})=-f(t_{min})$.  Thus, if $f(t_{min})<0$, then $\Phi(t_{min})>0$. Furthermore, $\Phi(0)=-f(0)<0$. Therefore, as $t^*$ is the unique zero of $\Phi$, $t^*$ must be in $]0,t_{min}[$. In particular, $f(t^*)=t^*f'(t^*)<0$ because $f$ is strictly decreasing on $[0,t_{min}]$. The case where $f(t_{min})>0$ can be treated in the same way.
\end{proof}
\section{Preliminary lemmas}
\subsection{$\psi_n(o)$ as a mixture of Inverse Gaussian distributions and proof of Theorem \ref{uniforme}}
In this subsection, $V$ is a deterministic countable graph with constant weights $W>0$. For every $n\in\N$, we introduce the sigma-field $\mathcal{G}_n:=\sigma\left((\beta_i)_{i\in V_n\backslash\{o\}} \right)$. (Recall that $V_n=\{x\in V, d(o,x)\leq n\}$.) Moreover, for every $n\in\N$, let us introduce $$D_n:=\frac{1}{2}\sum\limits_{o\sim j}W\frac{\hat{G}_n(o,j)}{\hat{G}_n(o,o)}.$$
Then, it is remarkable that $\psi_n(o)$ has an Inverse Gaussian distribution conditionally on $\mathcal{G}_n$.
\begin{lem} \label{loicondipsi}
For every $n\in\N$, under $\nu_V^W$,
\begin{enumerate}[(i)]
\item $$\mathcal{L}(\beta_o|\mathcal{G}_n)=D_n+\frac{1}{2\times IG\left(\frac{\hat{G}_{n}(o,o)}{\psi_n(o)},1\right)} $$
\item  $$\mathcal{L}\left(\psi_n(o)|\mathcal{G}_n\right)=IG\left(1,\frac{\psi_n(o)}{\hat{G}_n(o,o)} \right) $$
\end{enumerate}
where we recall that $IG(a,\lambda)$ stands for an Inverse Gaussian distribution with parameters $a$ and $\lambda$.
\end{lem}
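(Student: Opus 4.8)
The plan is to write down the conditional law of $\beta_o$ given $\mathcal{G}_n$ explicitly, using only the marginal and conditioning rules for $\beta$-potentials, and then to read off both assertions. The key point is that $\mathcal{G}_n$ does not involve $\beta_o$, so Lemma \ref{restrictionmart} applies with $S=V_n$, $U=V_n\setminus\{o\}$ and $U^c=\{o\}$: conditionally on $\mathcal{G}_n$, the single variable $\beta_o$ follows a one-site potential law $\tilde{\nu}_{\{o\}}^{\check{W}_{o,o},\check{\eta}_o}$, whose density is elementary and matches an Inverse Gaussian density up to reciprocal and rescaling. Assertion (ii) then follows from (i) together with the Schur-complement identities expressing $\hat{G}_n(o,o)$ and $\psi_n(o)$ in terms of $(H_\beta)_{U,U}^{-1}$.

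First I would fix $n\ge 1$ (the degenerate case $n=0$, where $\mathcal{G}_0$ is trivial and $D_0=0$, being immediate). By Lemma \ref{restrictioninfini}, $(\beta_i)_{i\in V_n}$ has law $\tilde{\nu}_{V_n}^{\hat{P}^{(n)},\hat{\eta}^{(n)}}$ with $\hat{P}^{(n)}(i,j)=W\mathbf{1}\{i\sim j\}$ and $\hat{\eta}^{(n)}_i=\sum_{j\sim i,\,j\notin V_n}W$; in particular $\hat{\eta}^{(n)}_o=0$ because all neighbours of $o$ lie in $V_1\subset V_n$. Writing $A:=(H_\beta)_{U,U}$ with $U=V_n\setminus\{o\}$, Lemma \ref{restrictionmart}(ii) gives that, conditionally on $\mathcal{G}_n$, $\beta_o$ has law $\tilde{\nu}_{\{o\}}^{\check{W}_{o,o},\check{\eta}_o}$ with $\check{W}_{o,o}=W^2\sum_{k,l\sim o}A^{-1}(k,l)$ and $\check{\eta}_o=W\sum_{k\sim o}\sum_{l\in U}A^{-1}(k,l)\,\hat{\eta}^{(n)}_l$, using that every neighbour of $o$ belongs to $U$.

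Second, I would record the block-matrix identities; this is where the real bookkeeping lies. Writing $(H_\beta)_{V_n,V_n}$ in block form with blocks $\{o\}$ and $U$ and off-diagonal vector $r=(W\mathbf{1}\{k\sim o\})_{k\in U}$, the Schur complement of $A$ yields $\hat{G}_n(o,o)=(2\beta_o-r^{T}A^{-1}r)^{-1}$ and $\hat{G}_n(o,k)=\hat{G}_n(o,o)\,(r^{T}A^{-1})_k$ for $k\in U$. Since $r^{T}A^{-1}r=\check{W}_{o,o}$, this gives $1/\hat{G}_n(o,o)=2(\beta_o-D_n)$ and $\check{W}_{o,o}=2D_n$. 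Summing the equation $(H_\beta\psi_n)(i)=0$ from (\ref{defipsi}) over $j\sim i$ and separating neighbours inside and outside $V_n$ shows $(H_\beta)_{V_n,V_n}\,(\psi_n|_{V_n})=\hat{\eta}^{(n)}$, hence $\psi_n|_{V_n}=\hat{G}_n\hat{\eta}^{(n)}$; evaluating at $o$ and using $\hat{\eta}^{(n)}_o=0$ together with the formula for $\hat{G}_n(o,l)$, $l\in U$, gives $\psi_n(o)=\hat{G}_n(o,o)\,\check{\eta}_o$, i.e.\ $\check{\eta}_o=\psi_n(o)/\hat{G}_n(o,o)$. Observe that $D_n$ and $\psi_n(o)/\hat{G}_n(o,o)$ depend only on $A$ and $\hat{\eta}^{(n)}$, hence are $\mathcal{G}_n$-measurable --- which is what makes the two statements meaningful even though $\psi_n(o)$ and $\hat{G}_n(o,o)$ individually are not.

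Finally I would translate the one-site law into Inverse Gaussian language. Plugging $|S|=1$ into the density (\ref{formule_densitechap3}), the law $\tilde{\nu}_{\{o\}}^{\check{W}_{o,o},\check{\eta}_o}$ makes $x:=2\beta_o-\check{W}_{o,o}$ positive with density proportional to $x^{-1/2}e^{-x/2-\check{\eta}_o^{2}/(2x)}$ on $(0,\infty)$, and comparison with (\ref{densityig}) shows that, conditionally on $\mathcal{G}_n$, $1/x\sim IG(1/\check{\eta}_o,1)$. Since $x=2(\beta_o-D_n)$, $1/x=\hat{G}_n(o,o)$ and $1/\check{\eta}_o=\hat{G}_n(o,o)/\psi_n(o)$, this is exactly (i). For (ii), $\psi_n(o)=\check{\eta}_o\hat{G}_n(o,o)=\check{\eta}_o/x=\check{\eta}_o\cdot(1/x)$, and the scaling property $c\cdot IG(a,\lambda)\sim IG(ca,c\lambda)$ yields $\psi_n(o)\mid\mathcal{G}_n\sim IG(1,\check{\eta}_o)=IG\!\left(1,\psi_n(o)/\hat{G}_n(o,o)\right)$. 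The genuine obstacle is the matching carried out in the third paragraph --- identifying $\check{W}_{o,o},\check{\eta}_o$ from Lemma \ref{restrictionmart} with $2D_n$ and $\psi_n(o)/\hat{G}_n(o,o)$ through the Schur complement and the linear system for $\psi_n$; everything else is recognition of standard one-dimensional densities.
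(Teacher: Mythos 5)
Your proposal is correct and takes essentially the same approach as the paper: condition on $(\beta_i)_{i\in V_n\setminus\{o\}}$ via Lemmas \ref{restrictioninfini} and \ref{restrictionmart}, identify $\check{W}_{o,o}=2D_n$ and $\check{\eta}_o=\psi_n(o)/\hat{G}_n(o,o)$ (both $\mathcal{G}_n$-measurable), recognise the resulting one-site density as that of a shifted reciprocal Inverse Gaussian, and conclude (ii) by the scaling property of the Inverse Gaussian law. The only cosmetic difference is that you derive the key identity $\hat{G}_n(o,k)/\hat{G}_n(o,o)=W\sum_{j\sim o}(H_{\beta})_{V_n\setminus\{o\},V_n\setminus\{o\}}^{-1}(j,k)$ by a Schur-complement (block inversion) computation, whereas the paper reads it off from the path-expansion Lemma \ref{pathexpo}; both are routine and equivalent.
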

The computation achieved in the following proof is basically the same as Proposition 3.4 in \cite{ZC} but we use it in a different way.
\begin{proof}[Proof of Lemma \ref{loicondipsi}]
By Lemma \ref{restrictioninfini}, $(\beta_i)_{i\in V_n}$ has law $\tilde{\nu}_{V_n}^{\hat{P}^{(n)},\hat{\eta}^{(n)}}$ where 
$$\hat{\eta}^{(n)}_i=\sum\limits_{j\in V_n^c,i\sim j} W$$
for every $i\in V_n$ and $$\hat{P}^{(n)}(i,j)=W\textbf{1}\{i\sim j\}$$
for every $i,j\in V_n$.
Further, by Lemma \ref{restrictionmart}, the law of $\beta_o$ conditionally on $\mathcal{G}_n$ is $\tilde{\nu}_{\{o\}}^{W_{o,o},\check{\eta}}$ with:
\begin{itemize}
\item $W_{o,o}=\sum\limits_{o\sim j} \sum\limits_{o\sim k} W^2 \hat{G}_{V_n\backslash\{o\}}(j,k)$ where $\hat{G}_{V_n\backslash\{o\}}$ is the inverse of $(H_{\beta})_{V_n\backslash\{o\}, V_n\backslash\{o\}}$.\\
\item $\check{\eta}=\sum\limits_{o\sim j}\sum\limits_{k\in V_n\backslash\{o\}} W \hat{G}_{V_n\backslash\{o\}}(j,k) \hat{\eta}^{(n)}_k$.
\end{itemize}
Nevertheless, reasonning on path-expansions (see Lemma \ref{pathexpo}),  one remarks that for every $k\in V_n\backslash\{o\}$, 
\begin{align}
\sum\limits_{o\sim j} W \hat{G}_{V_n\backslash\{o\}}(j,k)=\frac{\hat{G}_n(o,k)}{\hat{G}_n(o,o)}.\label{amelio1}
\end{align}
Consequently, by definition of $D_n$ and $\psi_n(o)$, it holds that
\begin{itemize}
\item $W_{o,o}=\sum\limits_{o\sim k} W \frac{\hat{G}_n(o,k)}{\hat{G}_n(o,o)}=2 D_n$.
\item $\check{\eta}=\sum\limits_{k\in V_n\backslash\{o\}} \frac{\hat{G}_n(o,k)}{\hat{G}_n(o,o)} \hat{\eta}^{(n)}_k=\frac{1}{\hat{G}_n(o,o)}\times\sum\limits_{k\in \partial V_n} \hat{G}_n(o,k)\hat{\eta}^{(n)}_k=\frac{\psi_n(o)}{\hat{G}_n(o,o)}$.
\end{itemize}
Moreover $D_n$ and $\frac{\psi_n(o)}{\hat{G}_n(o,o)}$ are $\mathcal{G}_n$ measurable. Indeed $$D_n=\frac{1}{2}\sum\limits_{o\sim k}W\frac{\hat{G}_n(o,k)}{\hat{G}_n(o,o)}\text{ and }\frac{\psi_n(o)}{\hat{G}_n(o,o)}=\sum\limits_{k\in\partial V_n}\frac{\hat{G}_n(o,k)\hat{\eta}^{(n)}_k}{\hat{G}_n(o,o)}.$$Further, for every $k\in V_n$, $\frac{\hat{G}_n(o,k)}{\hat{G}_n(o,o)} $ does not depend on $\beta_o$ by \eqref{amelio1} and, thus, it is $\mathcal{G}_n$ measurable.
Therefore, by (\ref{formule_densitechap3}), conditionally on $\mathcal{G}_n$, the law of $\beta_o$ is given by the density
$$\textbf{1}\{\beta> D_n\} \frac{1}{\sqrt{\pi(\beta-D_n)}}e^{-(\beta-D_n)}e^{-\frac{1}{4(\beta-D_n)}\frac{\psi_n(o)^2}{\hat{G}_n(o,o)^2}}e^{\frac{\psi_n(o)}{\hat{G}_n(o,o)}}. $$
We can recognise the reciprocal of an Inverse Gaussian distribution. More precisely,$$\mathcal{L}\left(\beta_o| \mathcal{G}_n \right)= D_n+\frac{1}{2\times IG\left(\frac{\hat{G}_n(o,o)}{\psi_n(o)},1 \right)}.$$
Besides, as $\hat{G}_n$ is the inverse of $(H_{\beta})_{V_n,V_n}$, $\beta_o-D_n=\frac{1}{2 \hat{G}_n(o,o)}$.
Consequently, as $D_n$ is  $\mathcal{G}_n$ measurable, this yields
$$\mathcal{L}\left( \hat{G}_n(o,o)|\mathcal{G}_n\right)=IG\left(\frac{\hat{G}_n(o,o)}{\psi_n(o)},1 \right).$$
Moreover for every positive numbers $(t,a,b)$, one can check that $t IG(a,b)\overset{law}=IG(ta,tb)$. Furthermore $\frac{\hat{G}_n(o,o)}{\psi_n(o)}$ is $\mathcal{G}_n$ measurable. Thus, it holds that
$$\mathcal{L}\left(\psi_n(o)|\mathcal{G}_n\right)=IG\left(1,\frac{\psi_n(o)}{\hat{G}_n(o,o)} \right). $$
\end{proof}
Moreover, we can pass to the limit in Lemma \ref{loicondipsi}. Let us define $\mathcal{G}_{\infty}:=\sigma\left((\beta_i)_{i\in \mathbb{Z}^d\backslash\{o\}} \right)$. Let us recall that $(\hat{G}_n(i,j))_{n\in\N}$ converges toward some finite limit $\hat{G}(i,j)$ for every $(i,j)\in V^2$. Then, we introduce $D=\frac{1}{2}\sum\limits_{o\sim j} W\frac{\hat{G}(o,j)}{\hat{G}(o,o)}$.
\begin{lem} \label{condiloipsilim}
We assume that $\psi(o)>0$, $\nu_{V}^W$-a.s. Then, under $\nu_V^W$,
\begin{enumerate}[(i)]
\item $$\mathcal{L}\left(\beta_o|\mathcal{G}_{\infty} \right)=D+\frac{1}{2\times IG\left(\frac{\hat{G}(o,o)}{\psi(o)},1 \right)}.$$
\item $$\mathcal{L}\left(\psi(o)|\mathcal{G}_{\infty}\right)=IG\left(1,\frac{\psi(o)}{\hat{G}(o,o)} \right). $$
\end{enumerate}
\end{lem}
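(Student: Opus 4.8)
The plan is to obtain the limiting conditional law by passing to the limit in Lemma \ref{loicondipsi}, using the monotone convergence $\hat{G}_n(i,j)\uparrow\hat{G}(i,j)$ from Proposition \ref{thsabotzengchap3} together with the almost sure convergence $\psi_n(o)\to\psi(o)$. The only subtlety is that conditional laws do not automatically behave well under limits of the conditioning $\sigma$-fields, so I would argue directly with the explicit densities rather than quoting a general martingale-convergence statement for conditional distributions. Concretely, I would first note that the sequence of $\sigma$-fields $(\mathcal{G}_n)_{n\in\N}$ is increasing with $\bigvee_n\mathcal{G}_n=\mathcal{G}_\infty$ (since $\mathcal{G}_\infty=\sigma((\beta_i)_{i\in V\setminus\{o\}})$ and the $V_n$ exhaust $V$), and that by Lemma \ref{loicondipsi}(i) the conditional law of $\beta_o$ given $\mathcal{G}_n$ has the explicit density
$$\textbf{1}\{\beta>D_n\}\frac{1}{\sqrt{\pi(\beta-D_n)}}e^{-(\beta-D_n)}e^{-\frac{1}{4(\beta-D_n)}\frac{\psi_n(o)^2}{\hat{G}_n(o,o)^2}}e^{\frac{\psi_n(o)}{\hat{G}_n(o,o)}},$$
a measurable function of the two $\mathcal{G}_n$-measurable quantities $D_n$ and $\psi_n(o)/\hat{G}_n(o,o)$.

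Next I would identify the $\mathcal{G}_\infty$-measurable limits of these two random variables. Since $o$ has finitely many neighbours and $\hat{G}_n(o,j)\uparrow\hat{G}(o,j)$ for each such $j$, we get $D_n\to D$ almost surely; similarly $\psi_n(o)/\hat{G}_n(o,o)\to\psi(o)/\hat{G}(o,o)$ almost surely, the limit being finite and (under the standing assumption $\psi(o)>0$) positive, since $\hat{G}(o,o)<+\infty$ a.s. by Proposition \ref{thsabotzengchap3}. Then for any bounded continuous test function $\varphi$ and any bounded $\mathcal{G}_n$-measurable $h$, Lévy's upward theorem gives $\E_{\nu_V^W}[\varphi(\beta_o)\mid\mathcal{G}_n]\to\E_{\nu_V^W}[\varphi(\beta_o)\mid\mathcal{G}_\infty]$ in $L^1$ along a subsequence a.s.; on the other hand the left-hand side equals the integral of $\varphi$ against the displayed density, which converges pointwise (in $\beta$, for a.e. $\omega$) and in fact in total variation to the density with $D_n,\psi_n(o)/\hat{G}_n(o,o)$ replaced by $D,\psi(o)/\hat{G}(o,o)$ — because a shifted-reciprocal-Inverse-Gaussian density depends continuously in $L^1$ on its parameters. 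Matching the two limits yields
$$\mathcal{L}(\beta_o\mid\mathcal{G}_\infty)=D+\frac{1}{2\times IG\!\left(\frac{\hat{G}(o,o)}{\psi(o)},1\right)},$$
which is part (i).

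For part (ii), I would exploit the exact relation $\beta_o-D_n=\frac{1}{2\hat{G}_n(o,o)}$ (valid since $\hat{G}_n$ inverts $(H_\beta)_{V_n,V_n}$), which passes to the limit to give $\beta_o-D=\frac{1}{2\hat{G}(o,o)}$; hence conditionally on $\mathcal{G}_\infty$ one has $\hat{G}(o,o)\sim IG\!\left(\frac{\hat{G}(o,o)}{\psi(o)},1\right)$, and then the scaling identity $tIG(a,b)\overset{\mathrm{law}}{=}IG(ta,tb)$ applied with the $\mathcal{G}_\infty$-measurable factor $t=\psi(o)/\hat{G}(o,o)$ yields $\mathcal{L}(\psi(o)\mid\mathcal{G}_\infty)=IG\!\left(1,\frac{\psi(o)}{\hat{G}(o,o)}\right)$. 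The main obstacle is the interchange of limits in the conditional expectations: one must be careful that the hypothesis $\psi(o)>0$ a.s. keeps the limiting parameter $\psi(o)/\hat{G}(o,o)$ in $(0,+\infty)$ so that the limiting density is genuinely that of a (shifted reciprocal) Inverse Gaussian and not a degenerate object, and that the $L^1$-continuity of the Inverse Gaussian family in its parameters is uniform enough to pass from pointwise a.s. convergence of the conditioning quantities to convergence of the conditional laws. Everything else is a routine passage to the limit in the formulas of Lemma \ref{loicondipsi}.
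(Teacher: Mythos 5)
Your proposal is correct and follows essentially the same route as the paper: pass to the limit in the explicit conditional density of Lemma \ref{loicondipsi} using the a.s. convergence of $D_n$ and $\psi_n(o)/\hat{G}_n(o,o)$ (both $\mathcal{G}_n$-, hence $\mathcal{G}_\infty$-measurable) together with continuity of the density in its parameters, then deduce (ii) from the relation $\beta_o-D=\frac{1}{2\hat{G}(o,o)}$ and the scaling property of the Inverse Gaussian law. The only cosmetic difference is that you justify the limit of conditional laws via L\'evy's upward theorem and $L^1$-continuity of the density family, whereas the paper tests against cylinder events $\{(\beta_i)_{i\in\tilde{\Lambda}}\in A\}$ and applies dominated convergence, which amounts to the same thing.
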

\begin{proof}[Proof of Lemma \ref{condiloipsilim}]
Let $\Lambda$ be a finite subset of $V$ including $o$. Let us define $\tilde{\Lambda}=\Lambda\backslash \{o\}$. Let $A$ be a borelian set of $\R^{\tilde{\Lambda}}$. Let $F$ be a bounded continuous function of $\R^d$.
Then, by Lemma \ref{loicondipsi}, for every $n$ large enough,
\begin{align}
\begin{array}{ll}
\displaystyle\E_{\nu_V^W}\left[F(\beta_o) \textbf{1}\{(\beta_i)_{i\in\tilde{\Lambda}}\in A \} \right]&\\
&\hspace{-4 cm}\displaystyle=\E_{\nu_V^W}\left[\int_0^{+\infty}F(\beta+D_n)\frac{1}{\sqrt{\pi \beta}}e^{-\frac{1}{4\beta}\left(\frac{\psi_n(o)}{\hat{G}_n(o,o)}-2\beta\right)^2}d\beta\textbf{1}\{(\beta_i)_{i\in\tilde{\Lambda}}\in A \}\right].
\end{array}\label{ig1}
\end{align}
Moreover, the function
$$(x,y)\mapsto \int_0^{+\infty}F(\beta+x)\frac{1}{\sqrt{\pi \beta}}e^{-\frac{1}{4\beta}\left(y-2\beta\right)^2}d\beta$$
is clearly continuous and uniformly bounded on $(\R_+^*)^2$.
Therefore, as 
$$\left(D_n,\frac{\psi_n(o)}{\hat{G}_n(o,o)}\right)\xrightarrow[n\rightarrow+\infty]{a.s}\left(D,\frac{\psi(o)}{\hat{G}(o,o)}\right),$$
by means of the dominated convergence theorem, we can take the limit in (\ref{ig1}) which implies the first point of our lemma. Then, the second point of Lemma \ref{condiloipsilim} stems from the first point, exactly in the same way as in the proof of Lemma \ref{loicondipsi}.

\end{proof}
Now we are able to prove Theorem \ref{uniforme}. 
\begin{proof}[Proof of Theorem \ref{uniforme}]
By Lemma \ref{condiloipsilim}, we know that
$$\mathcal{L}\left(\psi(o)|\mathcal{G}_{\infty}\right)=IG\left(1,\frac{\psi(o)}{\hat{G}(o,o)} \right). $$
In particular,
\begin{align}
\E_{\nu_{V}^W}\left[ \psi(o)\right]=\E_{\nu_{V}^W}\left[IG\left(1,\frac{\psi(o)}{\hat{G}(o,o)} \right) \right]=1
\end{align}
Thus for every $n\in\N^*$, $\E_{\nu_{V}^W}\left[\psi_n(o)\right]=\E_{\nu_{V}^W}\left[\psi(o)\right]=1$. Moreover, $\psi_n(o)\xrightarrow[n\rightarrow +\infty]{a.s} \psi(o)$. Thus, by Scheffé's lemma,
$$ \psi_n(o)\xrightarrow[n\rightarrow+\infty]{L_1}\psi(o).$$
Therefore $(\psi_n(o))_{n\in\N}$ is uniformly integrable.
\end{proof}
Besides, Lemma \ref{loicondipsi} implies the following useful result:
\begin{lem} \label{momentspsi}
Let $p\in\R$. For every $n\in\N$,
$$\E_{\nu_V^W}\left[\psi_n(o)^p \right]= \E_{\nu_V^W}\left[\psi_n(o)^{1-p} \right].$$
\end{lem}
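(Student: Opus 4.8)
The plan is to read the identity off directly from part (ii) of Lemma~\ref{loicondipsi}, combined with the reflection symmetry of the moments of the Inverse Gaussian law. First I would fix $n\in\N$ and set $\lambda_n:=\psi_n(o)/\hat G_n(o,o)$. As was already observed in the proof of Lemma~\ref{loicondipsi}, the random variable $\lambda_n$ is $\mathcal G_n$-measurable: indeed $\lambda_n=\sum_{k\in\partial V_n}\hat G_n(o,k)\hat\eta^{(n)}_k/\hat G_n(o,o)$, and by the path-expansion identity \eqref{amelio1} each ratio $\hat G_n(o,k)/\hat G_n(o,o)$ does not involve $\beta_o$. Moreover, Lemma~\ref{loicondipsi}(ii) asserts that, conditionally on $\mathcal G_n$, the law of $\psi_n(o)$ is $IG(1,\lambda_n)$.

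Next I would invoke the symmetry $Q(\lambda,t)=Q(\lambda,1-t)$ recalled after \eqref{densityig}: for every $\lambda>0$ and every $t\in\R$, a random variable $A\sim IG(1,\lambda)$ satisfies $\E[A^t]=\E[A^{1-t}]$, which is nothing but the change of variables $x\mapsto 1/x$ in the density \eqref{densityig}, using that $(x-1)^2/x$ is invariant under that map. Since $\lambda_n$ is $\mathcal G_n$-measurable, it may be treated as a constant parameter inside the conditional expectation, and we get, $\nu_V^W$-a.s.,
$$\E_{\nu_V^W}\!\left[\psi_n(o)^p\mid\mathcal G_n\right]=Q(\lambda_n,p)=Q(\lambda_n,1-p)=\E_{\nu_V^W}\!\left[\psi_n(o)^{1-p}\mid\mathcal G_n\right].$$
Taking expectations of both sides and using the tower property, which is valid in $[0,+\infty]$ for the non-negative variables $\psi_n(o)^p$ and $\psi_n(o)^{1-p}$ so that no a priori integrability is needed, yields $\E_{\nu_V^W}[\psi_n(o)^p]=\E_{\nu_V^W}[\psi_n(o)^{1-p}]$, as claimed.

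There is essentially no hard step here: the lemma is a one-line consequence of Lemma~\ref{loicondipsi}(ii). The only point deserving a little care is the $\mathcal G_n$-measurability of the parameter $\lambda_n$ appearing in the conditional law of $\psi_n(o)$; once that is granted (and it was checked in the proof of Lemma~\ref{loicondipsi}), one may legitimately apply the Inverse Gaussian moment identity conditionally on $\mathcal G_n$.
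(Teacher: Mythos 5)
Your proof is correct and follows essentially the same route as the paper: both rest on Lemma \ref{loicondipsi}(ii), the $\mathcal G_n$-measurability of $\psi_n(o)/\hat G_n(o,o)$, and the Inverse Gaussian moment symmetry obtained from the change of variables $x\mapsto 1/x$ in the density. The paper simply carries out that change of variables explicitly inside the integral rather than quoting $Q(\lambda,t)=Q(\lambda,1-t)$, and your remark that the tower property is applied to non-negative quantities (so no integrability is needed) is a harmless extra precaution.
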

\begin{proof}[Proof of Lemma \ref{momentspsi}]
Let us define $Y_n=\frac{\psi_n(o)}{\hat{G}_n(o,o)}$. Then, by Lemma \ref{loicondipsi},
$$\begin{array}{ll}
\E_{\nu_V^W}\left[\psi_n(o)^p \right]&=\E_{\nu_V^W}\left[\displaystyle\int Y_n^{1/2}(2\pi)^{-1/2}x^{p-3/2}\exp\bigg(-Y_n(x-1)^2/(2x)\bigg)dx \right]\vspace{0,1 cm}\\ 
&=\E_{\nu_V^W}\left[\displaystyle\int Y_n^{1/2}(2\pi)^{-1/2}x^{-p+3/2}x^{-2}\exp\bigg(-Y_nx(1/x-1)^2/2\bigg)dx \right]\vspace{0,1 cm}\\
&=\E_{\nu_V^W}\left[\displaystyle\int Y_n^{1/2}(2\pi)^{-1/2}x^{(-p+1)-3/2}\exp\bigg(-Y_n(x-1)^2/(2x)\bigg) dx\right]\vspace{0,1 cm}\\
&=\E_{\nu_V^W}\left[ \psi_n(o)^{1-p}\right].
\end{array}$$
\end{proof}

\subsection{Resistance formula on a tree}\label{subsecresi}
In this subsection we assume that $V$ is a tree. Let $n\in\N$. Let us define the matrix $\tilde{H}_n$ on $V_n\times V_n$ such that for every $(i,j)\in V_n\times V_n$, $\tilde{H}_{n}(i,j)=2\tilde{\beta_i}\textbf{1}\{i=j\} -W\textbf{1}\{i\sim j\}$. We assume that the potentials $\tilde{\beta}$ and $\beta$ are constructed as in \eqref{defbetagen} and \eqref{tildage}.
We also introduce $D_U^{(n)}$ which is the diagonal matrix on $V_n\times V_n$ with diagonal entries $D_U^{(n)}(i,i)= e^{U_i}$ for every $i\in V_n$. We can observe that $D_U^{(n)}\tilde{H}_nD_U^{(n)}=M_n$ where for every $(i,j)\in V_n\times V_n$, $$M_n(i,j)=\sum\limits_{k\sim i}We^{U_i+U_k}\textbf{1}\{i=j\} -We^{U_i+U_j}\textbf{1}\{i\sim j\}.$$
$M_n$ is almost a conductance matrix with conductances $We^{U_i+U_j}$ between two neighbouring vertices $i$ and $j$. However, if $i\in \partial V_n$, $$M_n(i,i)=\sum\limits_{k\sim i}We^{U_i+U_k}>\sum\limits_{k\sim i,k\in V_n}We^{U_i+U_k}.$$
Therefore, $M_n$ is strictly larger than a conductance matrix (for the order between symmetric matrices). Moreover conductance matrices are non-negative. Thus, $M_n$ and $\tilde{H}_n$ are symmetric positive definite matrices. Then, we are allowed to define the inverse $\tilde{G}_n$ of $\tilde{H}_n$. Moreover, for every $n\in\N$, we construct a wired version $(\tilde{V}_n,\tilde{E}_n)$ of $(V_n,E_n)$ in the following way: 
$$\left\lbrace\begin{array}{ll}
\tilde{V}_n&=V_n\cup \{\delta_n\}\\
\tilde{E}_n&=E_n\cup \{ (\delta_n,i),i\in\partial V_n\}
\end{array}\right.$$
where $\delta_n$ is a new vertex.
For every $(i,j)\in E$, recall from the notation of Proposition \ref{mixturetree} that $c(i,j)=We^{U_i+U_j}$. The conductances $c$ are the environment of the VRJP.
Now, let us introduce a family of conductances $c_n$ on $\tilde{E}_n$.
$$\left\lbrace\begin{array}{lll}
\forall (i,j)\in E_n, &c_n(i,j)&=c(i,j)\\
\forall i\in\partial V_n, &c_n(\delta_n,i)&=\sum\limits_{j\sim i, j\in V_n^c} c(i,j)
\end{array}\right.$$
We denote by $\mathcal{R}(o\longleftrightarrow\delta_n)$ the effective resistance between $o$ and $\delta_n$ in $(\tilde{V}_n,\tilde{E}_n,c_n)$. Then, we have the following key identity:
\begin{lem} \label{resistance}
If $V$ is a tree, then, for every $n\in\N^*$,
$\tilde{G}_n(o,o)=\mathcal{R}(o\longleftrightarrow\delta_n). $
\end{lem}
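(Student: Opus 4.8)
The plan is to identify $\tilde G_n(o,o)$ — the $(o,o)$ entry of the inverse of $\tilde H_n$ — with a Green's function / effective resistance of a reversible Markov chain, using the conjugation $D_U^{(n)}\tilde H_n D_U^{(n)} = M_n$ already recorded in the excerpt. First I would note that since $D_U^{(n)}$ is diagonal with $D_U^{(n)}(o,o) = e^{U_o} = 1$ (because $U_o = 0$ in the tree construction), we get $\tilde G_n(o,o) = (M_n^{-1})(o,o)$; so it suffices to prove $(M_n^{-1})(o,o) = \mathcal R(o\leftrightarrow\delta_n)$. The matrix $M_n$ is, on the bulk, exactly the graph Laplacian with conductances $c(i,j) = We^{U_i+U_j}$, but on the boundary $\partial V_n$ it has the extra diagonal mass $\sum_{j\sim i,\, j\in V_n^c} We^{U_i+U_j}$. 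The key observation is that this extra diagonal mass is precisely the conductance $c_n(\delta_n,i)$ of the wiring edge to $\delta_n$. Hence $M_n$ is the Dirichlet Laplacian on $(\tilde V_n,\tilde E_n,c_n)$ with the single vertex $\delta_n$ removed (grounded): $M_n = (L_{c_n})_{V_n,V_n}$ where $L_{c_n}$ is the full weighted Laplacian on $\tilde V_n$.

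Granting that, the statement becomes the standard fact that, for a finite connected electrical network with Laplacian $L$ and a grounded vertex $\delta$, the $(o,o)$ entry of the inverse of the Dirichlet Laplacian $L_{V\setminus\{\delta\}, V\setminus\{\delta\}}$ equals the effective resistance $\mathcal R(o\leftrightarrow\delta)$. I would prove this directly: let $v$ be the solution of $(L_{c_n})_{V_n,V_n} v = e_o$ (the unit vector at $o$), i.e. the potential when a unit current is injected at $o$ and extracted at $\delta_n$ (with $v(\delta_n)=0$); then $(M_n^{-1})(o,o) = v(o)$, and by definition of effective resistance $v(o) = \mathcal R(o\leftrightarrow\delta_n)$ since the voltage drop between $o$ and $\delta_n$ under unit current is the effective resistance. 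Equivalently, one can invoke the energy/variational characterisation $\mathcal R(o\leftrightarrow\delta_n)^{-1} = \inf\{\mathcal E_{c_n}(f) : f(o)=1, f(\delta_n)=0\}$ together with the fact that $(M_n^{-1})(o,o)^{-1}$ is the same Dirichlet-energy minimisation (this is the Schur-complement/Rayleigh-quotient identity for the $(o,o)$ entry of an inverse of a positive definite matrix with $\delta_n$ eliminated).

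The only genuine thing to check carefully — and the step I expect to be the minor obstacle — is the boundary bookkeeping: verifying that $M_n$ really is the full network Laplacian of $(\tilde V_n,\tilde E_n,c_n)$ restricted to $V_n\times V_n$, i.e. that for $i\in\partial V_n$ the diagonal entry $\sum_{k\sim i} We^{U_i+U_k}$ splits as $\sum_{k\sim i,\, k\in V_n} c_n(i,k) + c_n(\delta_n,i)$, and that there are no unexpected off-diagonal contributions involving $\delta_n$ inside $V_n\times V_n$ (there are none, since $\delta_n\notin V_n$). Once this identification is in place, everything else is the classical electrical-network dictionary, and I would simply cite the standard reference (e.g. Lyons--Peres) for the identity $(L_{V\setminus\{\delta\}})^{-1}(o,o) = \mathcal R(o\leftrightarrow\delta)$ rather than reprove it. A remark worth inserting: the strict inequality noted in the excerpt, $M_n(i,i) > \sum_{k\sim i,k\in V_n} We^{U_i+U_k}$ for $i\in\partial V_n$, is exactly what makes $M_n$ (hence $\tilde H_n$) invertible — it is the Dirichlet boundary condition at $\delta_n$ — so the inverse $\tilde G_n$ is well defined, consistent with the positive-definiteness already established.
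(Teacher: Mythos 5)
Your proposal is correct and follows essentially the same route as the paper: both rest on the conjugation $D_U^{(n)}\tilde{H}_nD_U^{(n)}=M_n$, the identification of $M_n$ with the conductance (Laplacian) matrix of the wired network $(\tilde{V}_n,\tilde{E}_n,c_n)$ with $\delta_n$ grounded, and the classical electrical-network dictionary. The only difference is packaging: where you cite the standard identity $\bigl((L_{c_n})_{V_n,V_n}\bigr)^{-1}(o,o)=\mathcal{R}(o\longleftrightarrow\delta_n)$, the paper re-derives it by exhibiting the harmonic voltage $h(i)=\tilde{G}_n(o,i)e^{-U_i}/\tilde{G}_n(o,o)$ (which is exactly your normalized Dirichlet Green's function) and applying the effective-conductance formula from Lyons--Peres, so your boundary bookkeeping check is precisely the content of their harmonicity computation.
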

\begin{proof}[Proof of Lemma \ref{resistance}]
For every $i\in V_n$, one defines $h(i)=\frac{\tilde{G}_n(o,i)e^{-U_i}}{\tilde{G}_n(o,o)}$ and $h(\delta_n)=0$. We are going to prove that $h$ is harmonic everywhere excepted at $o$ and $\delta_n$ where $h(o)=1$ and $h(\delta_n)=0$. Let $i\in V_n\backslash\{o\}$. Then, it holds that,
\begin{align}
\sum\limits_{i\sim j} c_n(i,j)h(j)&=\sum\limits_{i\sim j, j\in V_n} We^{U_i+U_j} \times\frac{\tilde{G}_n(o,j)e^{-U_j}}{\tilde{G}_n(o,o)}\nonumber\\
&=\frac{e^{U_i}}{\tilde{G}_n(o,o)}\sum\limits_{i\sim j, j\in V_n} W\tilde{G}_n(o,j) .\label{res1}
\end{align}
By definition $\tilde{G}_n=\tilde{H}_n^{-1}$. Together with (\ref{res1}), this yields
\begin{align}\label{res2}
\sum\limits_{i\sim j} c_n(i,j)h(j)&=\frac{e^{U_i}}{\tilde{G}_n(o,o)}\times 2\tilde{\beta}_i\tilde{G}_n(o,i).
\end{align}
Then, by definition of $U_i$ and $\tilde{\beta_i}$, we infer that
\begin{align}
\sum\limits_{i\sim j} c_n(i,j)h(j)&=\frac{\tilde{G}_n(o,i)}{\tilde{G}_n(o,o)}\times \sum\limits_{i\sim j} We^{U_j}\nonumber\\
&=\frac{\tilde{G}_n(o,i)e^{-U_i}}{\tilde{G}_n(o,o)}\times\left(c_n(i,\delta_n)+\sum\limits_{i\sim j, j\in V_n} c_n(i,j) \right)\nonumber\\
&= h(i)\times \sum\limits_{i\sim j}c_n(i,j).\nonumber
\end{align}
Consequently, $h$ is harmonic. Therefore, by identity (2.3) in \cite{LP},
\begin{align}\label{res4}
\mathcal{R}(o\longleftrightarrow \delta_n)=\frac{1}{\sum\limits_{o\sim j}c_n(o,j)(1-h(j))}.
\end{align}
Besides, it holds that,
\begin{align}\label{res5}
\sum\limits_{o\sim j}c_n(o,j)(1-h(j))&=\sum\limits_{o\sim j}We^{U_j}\times\left(1-\frac{\tilde{G}_n(o,j)e^{-U_j}}{\tilde{G}_n(o,o)} \right)\nonumber\\
&=\tilde{G}_n(o,o)^{-1}\sum\limits_{o\sim j}W\left(e^{U_j}\tilde{G}_n(o,o)-\tilde{G}_n(o,j) \right)
\end{align}
However $\tilde{G}_n$ is the inverse of $\tilde{H}_n$. Therefore, $\sum\limits_{o\sim j}W\tilde{G}_n(o,j)=-1+2\tilde{\beta}_0\tilde{G}_n(o,o)$. Moreover, $\sum\limits_{o\sim j}We^{U_j}=2\tilde{\beta}_0$. Together with (\ref{res5}), this yields
\begin{align}\label{res6}
\sum\limits_{o\sim j}c_n(o,j)(1-h(j))&=\tilde{G}_n(o,o)^{-1}\left(2\tilde{\beta}_0\tilde{G}_n(o,o)-\left(-1+2\tilde{\beta}_0\tilde{G}_n(0,0) \right)\right)\nonumber\\
&=\tilde{G}_n(o,o)^{-1}.
\end{align}
Combining  (\ref{res4}) and (\ref{res6}) concludes the proof.
\end{proof}
By means of Lemma \ref{resistance}, one can prove the following lemma which shall be useful later in this paper.
\begin{lem}\label{limitegtilde}
Let $V$ be a Galton-Watson tree whose offspring law satisfies hypothesis $A_1$.
\begin{enumerate}[(i)]
\item $\forall W\in]0,W_c(\mu)]$, $\underset{n\rightarrow+\infty}\lim \tilde{G}_n(o,o)=+\infty, \hspace{0.3 cm}\P_{\mu,W}-a.s.$
\item $\forall W\in]W_c(\mu),+\infty[$, $\underset{n\rightarrow+\infty}\lim \tilde{G}_n(o,o):=\tilde{G}(o,o)<+\infty, \hspace{0.3 cm}\P_{\mu,W}-a.s.$
\end{enumerate}
\end{lem}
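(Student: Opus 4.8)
The plan is to reduce everything to the resistance identity of Lemma \ref{resistance} together with the classical characterization of recurrence in terms of effective resistances. First I would fix a realization of the Galton--Watson tree $V$ (which is infinite and supercritical, since $\mu(0)=0$ and $m>1$ by $A_1$) and of the field $U$, and observe that the wired network $(\tilde V_n,\tilde E_n,c_n)$ is nothing but the network obtained from the infinite tree equipped with the conductances $c(i,j)=We^{U_i+U_j}$ by identifying all vertices of $V_n^c$ into the single vertex $\delta_n$: under this identification the parallel edges joining a vertex $i\in\partial V_n$ to its children lying in $V_n^c$ merge into a single edge of conductance $\sum_{j\sim i,\,j\in V_n^c}c(i,j)=c_n(\delta_n,i)$, and the self-loops created within $V_n^c$ do not affect effective resistances. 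Since $V_{n+1}^c\subsetneq V_n^c$, the level-$n$ network is obtained from the level-$(n+1)$ network by further identifying the generation-$(n+1)$ vertices with $\delta_{n+1}$; by the shorting law this can only decrease the effective resistance from $o$ to the contracted vertex, so $n\mapsto\mathcal R(o\longleftrightarrow\delta_n)$ is non-decreasing. By Lemma \ref{resistance}, $\tilde G_n(o,o)=\mathcal R(o\longleftrightarrow\delta_n)$; hence $(\tilde G_n(o,o))_n$ converges $\P_{\mu,W}$-a.s.\ to a limit $\tilde G(o,o)\in\,]0,+\infty]$, which is by definition the effective resistance $\mathcal R(o\longleftrightarrow\infty)$ from $o$ to infinity in the infinite tree network with conductances $c$.

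Next I would invoke the standard fact (see e.g.\ \cite{LP}) that a nearest-neighbour random walk on a locally finite connected network is recurrent if and only if the effective resistance from some (equivalently, any) vertex to infinity is infinite. By Proposition \ref{mixturetree}, the discrete-time VRJP $(\tilde Z_k)_k$ on $V$ is precisely the random walk associated with the conductances $c(i,\cev{i})=We^{U_i+U_{\cev{i}}}$; consequently $\tilde G(o,o)=+\infty$ if and only if the VRJP is recurrent on $V$. It then suffices to feed in the phase transition: by Propositions \ref{transphasechap2} and \ref{pointcritloc}, $\P_{\mu,W}$-a.s.\ the VRJP on $V$ is recurrent when $W\le W_c(\mu)$ and transient when $W>W_c(\mu)$. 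Combining these statements yields $(i)$ $\tilde G_n(o,o)\nearrow+\infty$ for $W\in\,]0,W_c(\mu)]$ and $(ii)$ $\tilde G_n(o,o)\nearrow\tilde G(o,o)<+\infty$ for $W\in\,]W_c(\mu),+\infty[$, both $\P_{\mu,W}$-a.s.

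I do not expect a serious obstacle: the whole argument rests on the already-established identity of Lemma \ref{resistance} and on textbook electrical-network theory. The only points requiring a little care are to check that the networks $(\tilde V_n,\tilde E_n,c_n)$ really are nested contractions of the infinite tree, so that the monotonicity of $n\mapsto\tilde G_n(o,o)$ and the identification of its limit with the resistance to infinity are legitimate, and to note that on a tree the free and wired exhaustions give the same resistance to infinity, so that the recurrence criterion applies without ambiguity. Hypothesis $A_1$ is used only to ensure that the tree is infinite and supercritical, so that $W_c(\mu)$ is well defined and there is an "infinity" to speak of.
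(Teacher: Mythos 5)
Your proposal is correct and follows essentially the same route as the paper: identify $\tilde G_n(o,o)$ with $\mathcal R(o\longleftrightarrow\delta_n)$ via Lemma \ref{resistance}, recognize the walk with conductances $c$ as the discrete-time VRJP (Proposition \ref{mixturetree}), and conclude with the standard electrical-network recurrence criterion together with the phase transition of Propositions \ref{transphasechap2} and \ref{pointcritloc}. The only difference is cosmetic: you re-derive the monotone convergence of the wired resistances by the shorting law, whereas the paper simply invokes Theorem 2.3 of \cite{LP}.
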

\begin{proof}[Proof of Lemma \ref{limitegtilde}]
By Propositions \ref{mixturetree} and \ref{pointcritloc}, $W\leq W_c(\mu)$ if and only if the random walk with conductances $(c_{i,j})_{(i,j)\in E}$ is recurrent almost surely. By Theorem 2.3 in \cite{LP}, this is equivalent to say that
$$\underset{n\rightarrow+\infty}\lim\hspace{0.2 cm}\mathcal{R}(o\longleftrightarrow \delta_n)=+\infty.$$
Therefore, Lemma \ref{resistance} concludes the proof.
\end{proof}

\subsection{Burkholder-Davis-Gundy inequality}
As $(\psi_n(o))_{n\in\N}$ is a martingale, there is a relation between its moments and the moments of its bracket $(\hat{G}_n(o,o))_{n\in\N}$ under mild assumptions. This relation is known as the BDG inequality. This inequality is not always true for discrete martingales. (See \cite{BDGhist}.) However, this is always true for continuous martingales. Fortunately, by \cite{sabotzengcont}, for every $n\in\N$, $\psi_n(o)$ can be obtained as the limit of some continuous martingale. That is why we can prove the following lemma:
\begin{lem}\label{BDG}
Let $V$ be a locally finite graph. Let $W>0$. Let $p>1$. Then, there exist positive constants $C_{1,p}$ and $C_{2,p}$ which do not depend on $V$ and $W$ such that for every $n\in\N$,
$$C_{1,p}\E_{\nu_{V}^W}\left[\hat{G}_n(o,o)^{p/2}\right]\leq \E_{\nu_{V}^W}\left[|\psi_n(o)-1|^p\right]\leq C_{2,p}\E_{\nu_{V}^W}\left[\hat{G}_n(o,o)^{p/2}\right].$$
\end{lem}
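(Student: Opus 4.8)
The plan is to exploit the continuous-time realization of $\psi_n(o)$ furnished by \cite{sabotzengcont} in order to apply the \emph{continuous}-time Burkholder--Davis--Gundy inequality (whose constants are universal, unlike in the discrete case), and then to pass from the running maximum back to the terminal value through Doob's $L^p$ maximal inequality. Fix $n\in\N$; since $V$ is locally finite, $V_n$ is finite and, by \cite{sabotzengcont}, one may realize $\psi_n(o)$ on a filtered probability space as the almost sure limit of a nonnegative continuous martingale $(N_t)_{t\ge0}$ with $N_0=1$ and predictable quadratic variation $\langle N\rangle_\infty=\hat G_n(o,o)$, the joint law of $(\psi_n(o),\hat G_n(o,o))$ in the limit being the one induced by $\nu_V^W$. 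I would first record that $N$ is uniformly integrable and closed by its limit: being nonnegative with $N_0=1$ it is a supermartingale, so $\E[N_t]\le1$; since $N_t\to\psi_n(o)$ a.s. and $\E_{\nu_V^W}[\psi_n(o)]=1$ by Lemma \ref{loicondipsi}(ii), Fatou forces $\E[N_t]=1$ for every $t$, hence $N$ is a genuine martingale, Scheff\'e's lemma gives $N_t\to\psi_n(o)$ in $L^1$, and therefore $N_t=\E[\psi_n(o)\mid\mathcal F_t]$.

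For the upper bound I would set $M:=N-1$, a continuous martingale with $M_0=0$ and $M_\infty=\psi_n(o)-1$, so that $\langle M\rangle=\langle N\rangle$. Writing $M^*=\sup_{t\ge0}|M_t|$, the continuous BDG inequality provides a constant $C_{2,p}$ depending only on $p$ with $\E[(M^*)^p]\le C_{2,p}\,\E[\langle M\rangle_\infty^{p/2}]$ (valid irrespective of finiteness). Since $|\psi_n(o)-1|=|M_\infty|\le M^*$, taking expectations yields $\E_{\nu_V^W}[|\psi_n(o)-1|^p]\le C_{2,p}\,\E_{\nu_V^W}[\hat G_n(o,o)^{p/2}]$.

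For the lower bound I would use that $M$, being uniformly integrable, satisfies $M_t=\E[M_\infty\mid\mathcal F_t]$, so Doob's $L^p$ maximal inequality ($p>1$) gives $\E[(M^*)^p]\le(p/(p-1))^p\,\E[|M_\infty|^p]$; combined with the reverse BDG bound $c_p\,\E[\langle M\rangle_\infty^{p/2}]\le\E[(M^*)^p]$ for a universal $c_p>0$, this produces
$$\E_{\nu_V^W}\big[\hat G_n(o,o)^{p/2}\big]=\E\big[\langle M\rangle_\infty^{p/2}\big]\le\frac1{c_p}\Big(\frac{p}{p-1}\Big)^p\,\E_{\nu_V^W}\big[|\psi_n(o)-1|^p\big],$$
which is the claimed inequality with $C_{1,p}=c_p\,(p/(p-1))^{-p}$. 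Since the two BDG constants and $(p/(p-1))^p$ depend only on $p$, and the construction of \cite{sabotzengcont} makes sense for any locally finite graph and any $W>0$, the constants $C_{1,p}$ and $C_{2,p}$ do not depend on $V$ or $W$.

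The hard part is really only the input from \cite{sabotzengcont}: identifying a nonnegative continuous martingale whose terminal value is $\psi_n(o)$ and whose total bracket is exactly $\hat G_n(o,o)$ — the continuity is what makes BDG available, which is the whole point of the detour. Once that is in hand, the uniform integrability step (Scheff\'e), the maximal inequalities, and the extraction of the two constants are routine. A technically safe variant, if one prefers not to rely on positivity and uniform integrability directly, is to run the same argument on the martingale stopped at $\inf\{t:\langle N\rangle_t\ge K\}$ and let $K\to\infty$, using monotone convergence for $\langle N\rangle^{p/2}$ and the already-established upper bound (together with BDG applied to the full martingale) to control the running maximum and justify passing to the limit.
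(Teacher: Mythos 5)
Your proof is correct and follows essentially the same route as the paper: the continuous-time realization of $\psi_n(o)$ from \cite{sabotzengcont}, the Burkholder--Davis--Gundy inequality for continuous martingales, and Doob's $L^p$ maximal inequality, with constants depending only on $p$. The only (harmless) difference is that you work directly at $t=\infty$ after closing the martingale via Scheff\'e and $\E_{\nu_V^W}[\psi_n(o)]=1$, whereas the paper applies BDG and Doob at finite $t$ and then passes to the limit by monotone and dominated convergence.
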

\begin{proof}[Proof of Lemma \ref{BDG}]
By \cite{sabotzengcont}, for every $n\in\N$, there exists a continuous non-negative martingale $(\psi_n(o,t))_{t\geq 0}$ such that,
\begin{align}\label{BDGdoob1}
\psi_n(o,t)\xrightarrow[t\rightarrow+\infty]{a.s}\psi_n(o)\text{ and } \langle \psi_n(o,t),\psi_n(o,t)\rangle\xrightarrow [t\rightarrow+\infty]{a.s}\hat{G}_n(o,o)
\end{align}
where $\langle \cdots,\cdots\rangle$ is the bracket for semimartingales.
For $t\geq 0$, let us introduce $\psi_n^*(o,t)=\underset{s\leq t}\sup \hspace{0.2 cm}|\psi_n(o,s)-1|$. Then, if $p>1$, by BDG inequality for continuous martingales (see Theorem 4.1 in \cite{Revuz_Yor}), there exist positive constants $\kappa_{1,p}$ and $\kappa_{2,p}$ such that for every $n\in\N$, for every $t\geq 0$,
\begin{align}\label{BDGdoob1bis}
\kappa_{1,p}\E_{\nu_{V}^W}\left[  \langle \psi_n(o,t),\psi_n(o,t)\rangle^{p/2}\right]\leq \E_{\nu_{V}^W}\left[\psi_n^*(o,t)^p \right]\leq \kappa_{2,p}\E_{\nu_{V}^W}\left[  \langle \psi_n(o,t),\psi_n(o,t)\rangle^{p/2}\right] .
\end{align}
As $p>1$, by Doob's martingale inequality, there exist $C_{1,p}>0$ and $C_{2,p}>0$ such that for every $n\in\N$, for every $t\geq 0$,
\begin{align}\label{BDGdoob2}
C_{1,p}\E_{\nu_{V}^W}\left[  \langle \psi_n(o,t),\psi_n(o,t)\rangle^{p/2}\right]\leq \E_{\nu_{V}^W}\left[|\psi_n(o,t)-1|^p \right]\leq C_{2,p}\E_{\nu_{V}^W}\left[  \langle \psi_n(o,t),\psi_n(o,t)\rangle^{p/2}\right] .
\end{align}
Let us define $\psi_n^*(o)$ as the increasing limit of $\psi_n^*(o,t)$ when $t$ goes toward infinity. By monotone convergence theorem in (\ref{BDGdoob1bis}), for every $n\in\N$,
\begin{align}\label{BDGdoob3}
\E_{\nu_{V}^W}\left[\psi_n^*(o)^p\right]\leq \kappa_{2,p}\E_{\nu_{V}^W}\left[\hat{G}_n(o,o)^p \right]<+\infty.
\end{align}
Moreover, for any fixed value of $n$, $(|\psi_n(o,t)-1|^p)_{t\geq 0}$ is dominated by $\psi_n^*(o)^p$ which is integrable by (\ref{BDGdoob3}). Therefore, by dominated convergence theorem, we can make $t$ go to infinity in (\ref{BDGdoob2}) which concludes the proof.
\end{proof}
\subsection{Link between $\hat{G}_n$ and $\tilde{G}_n$}
Let us recall that $(\hat{G}_n(o,o))_{n\in\N}$ is the bracket of the martingale $(\psi_n(o))_{n\in\N}$ whose moments we are seeking an upper bound for. Therefore, it would be very interesting  for our purpose to be able to control the moments of $\hat{G}_n(o,o)$ for $n\in\N$. The following lemma shows there is a relation between the moments of $\hat{G}_n(o,o)$ and the moments of $\tilde{G}_n(o,o)$ for $n\in\N$. Remind that $\tilde{G}_n(o,o)$ has been defined in subsection \ref{subsecresi}.
For every $x>0$, let us define
$$F_p(x)=\int_0^{+\infty} \frac{x^p}{(1+2yx)^p}\frac{e^{-y}}{\sqrt{\pi y}}dy.$$
\begin{lem}\label{lienmoments}
We assume that $V$ is a deterministic graph.
Then, for every $n\in\N^*$ and for every $p>1/2$, $$\E_{\nu_V^W}\left[ \hat{G}_n(o,o)^p\right]=\E_{\nu_V^W}\left[F_p(\tilde{G}_n(o,o)) \right].$$ 
Moreover, $$F_p(x)\underset{x\rightarrow +\infty}\sim a_p x^{p-1/2}\text{ with }a_p=\int_0^{+\infty}\frac{dy}{(\pi y)^{1/2}(1+2y)^p}.$$
\end{lem}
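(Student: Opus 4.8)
The plan is to prove the two assertions separately: first the identity relating the moments of $\hat{G}_n(o,o)$ to those of $F_p(\tilde{G}_n(o,o))$, then the asymptotic equivalence $F_p(x)\sim a_p x^{p-1/2}$ as $x\to+\infty$.

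For the identity, I would start from Lemma \ref{loicondipsi}, which gives $\mathcal{L}(\psi_n(o)\mid\mathcal{G}_n)=IG(1,\psi_n(o)/\hat{G}_n(o,o))$, and more usefully its intermediate consequence (displayed in the proof of Lemma \ref{loicondipsi}) that $\mathcal{L}(\hat{G}_n(o,o)\mid\mathcal{G}_n)=IG(\hat{G}_n(o,o)/\psi_n(o),1)$. However, what I actually want is to condition on a sigma-field that only knows $\tilde{\beta}$, i.e. to integrate out the extra Gamma variable $\gamma$ sitting at the root (recall $\beta=\tilde\beta+\mathbf{1}\{\cdot=o\}\gamma$). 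Since $\tilde{H}_n$ and its inverse $\tilde{G}_n$ depend only on $\tilde\beta$, and $H_\beta$ differs from $\tilde H$ only in the $(o,o)$ entry by $2\gamma$, a resolvent (rank-one perturbation / Sherman–Morrison) computation gives
$$
\hat{G}_n(o,o)=\frac{\tilde{G}_n(o,o)}{1+2\gamma\,\tilde{G}_n(o,o)}.
$$
Taking the $p$-th power and integrating over $\gamma\sim\Gamma(1/2,1)$, whose density is $\mathbf{1}\{y>0\}\,e^{-y}/\sqrt{\pi y}$, yields exactly
$$
\E_{\nu_V^W}\!\left[\hat{G}_n(o,o)^p\,\middle|\,\tilde\beta\right]
=\int_0^{+\infty}\frac{\tilde{G}_n(o,o)^p}{(1+2y\,\tilde{G}_n(o,o))^p}\,\frac{e^{-y}}{\sqrt{\pi y}}\,dy
=F_p(\tilde{G}_n(o,o)),
$$
and then taking expectations gives the claim. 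I expect the rank-one resolvent identity to be the main point to get right; alternatively one can derive $\hat G_n(o,o)=\tilde G_n(o,o)/(1+2\gamma\tilde G_n(o,o))$ directly from the path-expansion of Lemma \ref{pathexpo}, since every path from $o$ to $o$ decomposes according to its excursions away from $o$, and the $\beta_o=\tilde\beta_o+\gamma$ factors at $o$ generate a geometric-type series that sums to this closed form.

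For the asymptotics of $F_p$, substitute $u=yx$ so that
$$
F_p(x)=\int_0^{+\infty}\frac{x^p}{(1+2u)^p}\,\frac{e^{-u/x}}{\sqrt{\pi u/x}}\,\frac{du}{x}
=x^{p-1/2}\int_0^{+\infty}\frac{e^{-u/x}}{\sqrt{\pi u}\,(1+2u)^p}\,du.
$$
As $x\to+\infty$ the integrand increases pointwise to $1/(\sqrt{\pi u}(1+2u)^p)$, which is integrable near $0$ (since $u^{-1/2}$ is integrable) and near $+\infty$ provided $p>1/2$; by monotone (or dominated) convergence the integral converges to $a_p=\int_0^{+\infty}du/(\sqrt{\pi u}(1+2u)^p)$, giving $F_p(x)\sim a_p x^{p-1/2}$. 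The only subtlety here is the convergence condition $p>1/2$, which is exactly the hypothesis in the statement, and the finiteness of $a_p$, which holds for the same reason.

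The main obstacle is the first step — establishing $\hat{G}_n(o,o)=\tilde{G}_n(o,o)/(1+2\gamma\tilde{G}_n(o,o))$ cleanly — since it requires correctly handling how the root's Gamma perturbation propagates through the matrix inverse; once that closed form is in hand, both the conditional-expectation identity and the Laplace-type asymptotic are routine.
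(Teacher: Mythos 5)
Your proposal is correct and follows essentially the same route as the paper: the rank-one identity $\hat{G}_n(o,o)=\tilde{G}_n(o,o)/(1+2\gamma\tilde{G}_n(o,o))$ (obtained in the paper via Cramer's formula for the perturbation $(H_\beta)_{V_n,V_n}=\tilde{H}_n+2\gamma E_{o,o}$), integration over the independent $\Gamma(1/2,1)$ variable $\gamma$, and then the change of variable plus monotone/dominated convergence for the asymptotics of $F_p$ under $p>1/2$. No gaps.
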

\begin{proof}[Proof of Lemma \ref{lienmoments}]
Let $n\in\N$. Recall that $(H_{\beta})_{V_n, V_n}=\tilde{H}_n+2\gamma E_{o,o}$ where $E_{o,o}$ is the matrix which has only null coefficients, excepted at $(o,o)$ where it has coefficient 1. Then, by Cramer's formula, we have the following key-equality:
\begin{align} \label{cramer}
\hat{G}_n(o,o)=\frac{\tilde{G}_n(o,o)}{1+2\gamma \tilde{G_n}(o,o)}.
\end{align}
Remind that $\gamma$ is a Gamma random variable with parameters (1/2,1) which is independent of $\tilde{\beta}$. Together with (\ref{cramer}), this implies directly the link between the moments of $\hat{G}_n(o,o)$ and $\tilde{G}_n(o,o)$. We only have to look at the asymptotic behaviour of $F_p$. By a change of variable, for every $x>0$,
\begin{align}
F_p(x)=x^{p-1/2}\int_0^{+\infty}\frac{e^{-y/x}}{(1+2y)^p(\pi y)^{1/2}}dy.
\end{align}
Then, by dominated convergence theorem, if $p>1/2$,
\begin{align}
\int_0^{+\infty}\frac{e^{-y/x}}{(1+2y)^p(\pi y)^{1/2}}dy\xrightarrow[x\rightarrow+\infty]{}a_p.
\end{align}
\end{proof}
\section{The transient phase}
We are now ready to prove Theorem \ref{bornitude}. Let us explain quickly the strategy of the proof.\\
\textbf{Strategy of the proof:} 
The idea is to find an upper bound for the moments of $\hat{G}_n(o,o)$. Indeed, it is enough for us because $(\hat{G}_n(o,o))_{n\in\N}$ is the bracket of $(\psi_n(o))_{n\in\N}$. Consequently, by Lemma \ref{lienmoments}, this is enough to find an upper bound for $\tilde{G}_n(o,o)$ which is also the effective resistance until level $n$ associated with the environment of the VRJP according to Lemma \ref{resistance}. Thus, we only need to show that the global effective resistance $\mathcal{R}(o\longleftrightarrow\infty)$ has moments of order $p$ for every $p>0$.  By standard computations, the effective resistance of the VRJP on a tree satisfies the equation in law
$$ \mathcal{R}(x)=\frac{1}{\sum\limits_{\cev{i}=x}\frac{A_i^2 W}{A_i+W\mathcal{R}(i)}}$$
where the random variables $\mathcal{R}(i)$ for $\cev{i}=x$ are i.i.d copies of $\mathcal{R}(x)$.
We will analyse this equation in law in order to bound the moments of the effective resistance.
\begin{proof}[Proof of Theorem \ref{bornitude}]
\textbf{Step 1:} The potential $(\beta_i)_{i\in V}$ on $V$ is constructed as in (\ref{defbetagen}). For every $x\in V$, recall that 
$e^{U_x}=\prod_{o<u\leq x} A_u.$
For every $x\in V$, let us define the subtree $V^x:=\{u\in V, x\leq u\}$. Moreover, for any neighbouring $i,j\in V^x$, let us define $c_x(i,j)=We^{U_i+U_j-2U_x}.$ Then, for every $x\in V$, let $\mathcal{R}(x)$ be the electrical resistance between $0$ and $\infty$ in the tree $V^x$ with conductances $c_x$. Remark that, under $\P_{\mu,W}$, $(\mathcal{R}(x))_{x\in V}$ is a family of identically distributed random variables. Furthermore, by Proposition \ref{mixturetree}, as $W>W_c(\mu)$, $\mathcal{R}(x)$ is finite for every $x\in V$, $\P_{\mu,W}$-a.s. The figure \ref{imagearbre} bellow explains the situation from an electrical point of view.
\begin{figure}[H]
\begin{center}
\includegraphics[scale=0.65]{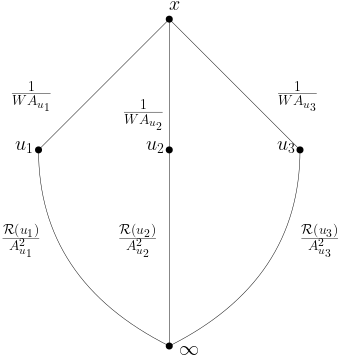} 
\end{center}
\caption{Electrical network on a subtree.
In this situation, the vertex $x$ has three children, $u_1$, $u_2$, $u_3$. On each edge the resistance in $V^x$ is written.}
\label{imagearbre}
\end{figure}

By standard computations on electrical networks we infer that for every $x\in V$,
\begin{align}
\mathcal{R}(x)=\frac{1}{\sum\limits_{\cev{i}=x}\frac{A_i^2 W}{A_i+W\mathcal{R}(i)}}.\nonumber
\end{align}
For sake of convenience, we define $\tilde{\mathcal{R}}(x)=W\mathcal{R}(x)$ for every $x\in V$. Therefore, it holds that for every $x\in V$,
\begin{align}\label{equationinlaw}
\tilde{\mathcal{R}}(x)=\frac{1}{\sum\limits_{\cev{i}=x}\frac{A_i^2 }{A_i+\tilde{\mathcal{R}}(i)}}.
\end{align}
\textbf{Step 2:} The following lines are inspired by the proof of Lemma 2.2 in \cite{aidekon}. For every $n\in\N$, the leftest vertex in generation $n$ of $V$ is denoted by $v_n$. We denote by $B(v_n)$ the set of "brothers" of $v_n$. Remark that this set is possibly empty if $\mu(1)\neq 0$. Let $C>0$. Let $\alpha>0$. We define $c_{\alpha}=1$ if $\alpha\leq 1$ and $c_{\alpha}=2^{\alpha-1}$ otherwise. For every $n\in\N^*$, let us introduce the event $E_n=\{\forall k\in\{1,\cdots,n\},\forall u \in B(v_k), \frac{c_{\alpha}}{A_u^{\alpha}}+\frac{c_{\alpha}\tilde{\mathcal{R}}(u)^{\alpha}}{A_u^{2\alpha}}>C\}$. By convention we write $\textbf{1}\{E_0\}:=1$. Now, let us prove the following key-inequality: for every $n\in\N^*$, $\P_{\mu,W}$-a.s,
\begin{align}\label{keyineq}
\tilde{\mathcal{R}}(o)^{\alpha}\leq C \sum\limits_{k=0}^{n-1} \textbf{1}\{E_k\}\prod\limits_{i=1}^{k}\left(\frac{c_{\alpha}}{A_{v_i}^{2\alpha}} \right)+\sum\limits_{k=1}^{n} \textbf{1}\{E_k\}A_{v_k}^{\alpha}\prod\limits_{i=1}^{k}\left(\frac{c_{\alpha}}{A_{v_i}^{2\alpha}} \right)+\textbf{1}\{E_n\}\prod\limits_{i=1}^n\left(\frac{c_{\alpha}}{A_{v_i}^{2\alpha}} \right)\tilde{\mathcal{R}}(v_n)^{\alpha}.
\end{align}
Let us prove it for $n=1$. By (\ref{equationinlaw}), we can observe that for every child $u$ of $o$,

\begin{align}\label{inegabase}
\tilde{\mathcal{R}}(o)^{\alpha}\leq \left(\frac{1}{A_u}+\frac{\tilde{\mathcal{R}}(u)}{A_u^{2} }\right)^{\alpha}\leq \frac{c_{\alpha}}{A_u^{\alpha}}+\frac{c_{\alpha}}{A_u^{2\alpha}}\tilde{\mathcal{R}}(u)^{\alpha}.
\end{align}
If $E_1$ is satisfied, then we can apply (\ref{inegabase}) with $u=v_1$ which implies
\begin{align}\label{premiercas}
\tilde{\mathcal{R}}(o)^{\alpha}\leq \textbf{1}\{E_1\}\left( \frac{c_{\alpha}}{A_{v_1}^{\alpha}}+\frac{c_{\alpha}}{A_{v_1}^{2\alpha}}\tilde{\mathcal{R}}(v_1)^{\alpha}\right).
\end{align}
If $E_1$ is not satisfied, then we can apply (\ref{inegabase}) with a brother of $v_1$ which implies
\begin{align}\label{secondcas}
\tilde{\mathcal{R}}(o)^{\alpha}\leq C.
\end{align}
Therefore, combining (\ref{premiercas}) and (\ref{secondcas}), we infer
\begin{align}
\tilde{\mathcal{R}}(o)^{\alpha}\leq C+\textbf{1}\{E_1\}\left( \frac{c_{\alpha}}{A_{v_1}^{\alpha}}+\frac{c_{\alpha}}{A_{v_1}^{2\alpha}}\tilde{\mathcal{R}}(v_1)^{\alpha}\right)\label{n1}
\end{align}
which is inequality (\ref{keyineq}) with $n=1$. Remark, that the inequality (\ref{n1}) is true even if $v_1$ is the only child of $o$.
The proof of (\ref{keyineq}) for any $n$ is obtained  by induction by iterating the inequality (\ref{n1}). Moreover, by construction, the events $$\left(\left\{\forall u \in B(v_k), \frac{c_{\alpha}}{A_u^{\alpha}}+\frac{c_{\alpha}\tilde{\mathcal{R}}(u)^{\alpha}}{A_u^{2\alpha}}>C \right\}\right)_{k\in\N^*}$$ are $\P_{\mu,W}$-independent. In addition, the probability of each of these events is the same and it is strictly less than 1 because $\tilde{R}(u)<+\infty$ for every $u\in V$ as $W>W_c(\mu)$. Therefore, $\P_{\mu,W}$-a.s, there exists $N\in\N^*$ such that $\textbf{1}\{E_n\}=0$ for every $n\geq N$. That is why we can make $n$ go to infinity in (\ref{keyineq}) which implies, $\P_{\mu,W}$-a.s,
\begin{align}\label{passlim}
\tilde{\mathcal{R}}(o)^{\alpha}\leq C \sum\limits_{k=0}^{+\infty} \textbf{1}\{E_k\}\prod\limits_{i=1}^{k}\left(\frac{c_{\alpha}}{A_{v_i}^{2\alpha}} \right)+\sum\limits_{k=1}^{\infty} \textbf{1}\{E_k\}A_{v_k}^{\alpha}\prod\limits_{i=1}^{k}\left(\frac{c_{\alpha}}{A_{v_i}^{2\alpha}} \right).
\end{align}
Now, let us introduce the random set $\mathcal{A}=\{i\in\N^*, B(v_i)\neq\emptyset\}$ and for every $k\in\N^*$ the random variable $\Gamma_k=|\mathcal{A}\cap\{1,\cdots k\}|$. Under $GW^{\mu}$, the sequence $(\Gamma_k)_{k\in\N}$ is a random walk whose increments are independent Bernoulli random variables with parameter $1-\mu(1)$. Further, $\mathcal{A}$ can be written as $\{J_1\leq J_2\leq J_3\leq\cdots\}$. For every $i\in\N^*$, there exists a brother $L_i$ of $v_{J_i}$. The situation is summarized by the figure \ref{figure2} bellow.

\begin{figure}[H]
\begin{center}
\includegraphics[scale=0.65]{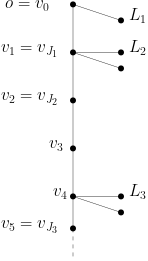} 
\end{center}
\caption{}
\label{figure2}
\end{figure}

By construction, conditionally on the underlying Galton-Watson tree, the random variables $\left(\textbf{1}\{\forall u \in B(v_k), \frac{c_{\alpha}}{A_u^{\alpha}}+\frac{c_{\alpha}\tilde{\mathcal{R}}(u)^{\alpha}}{A_u^{2\alpha}}>C \}\right)_{k\in\N^*}$ and $\left(A_{v_k} \right)_{k\in\N^*}$ are mutually independent. Therefore, together with (\ref{passlim}), this implies that, $GW^{\mu}$-a.s,
\begin{align}\label{egacruc}
\hspace{-0.3 cm}\E_{\nu_V^W}\left[ \tilde{\mathcal{R}}(o)^{\alpha}\right]\leq C+\left(C+\frac{Q(W,-\alpha)}{Q(W,-2\alpha)}\right)\sum\limits_{k=1}^{+\infty}(c_{\alpha}Q(W,-2\alpha))^k\prod\limits_{i=1}^{\Gamma_k}\nu_{V}^W\left(\frac{c_{\alpha}}{A_{L_i}^{\alpha}}+\frac{c_{\alpha}\tilde{\mathcal{R}}(L_i)^{\alpha}}{A_{L_i}^{2\alpha}}>C  \right)
\end{align}
where we recall that $Q(W,t)$ is the moment of order $t$ of an Inverse Gaussian random variable with parameters $(1,W)$. Remark that, under $GW^{\mu}$, conditionally on $(\Gamma_k)_{k\in\N^*}$,
$$\left(P_k\right)_{k\in\N^*}:=\left(\nu_{V}^W\left(\frac{c_{\alpha}}{A_{L_k}^{\alpha}}+\frac{c_{\alpha}\tilde{\mathcal{R}}(L_k)^{\alpha}}{A_{L_k}^{2\alpha}}>C  \right)\right)_{k\in\N^*}$$
is an $i.i.d$ sequence. Therefore, by the strong law of large numbers, $GW^{\mu}$-a.s,
$$\prod\limits_{i=1}^{\Gamma_k}P_i= \exp\bigg((\Gamma_k+o(\Gamma_k))\E_{GW^{\mu}}\left[\ln\left(P_1\right)\right] \bigg).$$
Moreover, by the strong law of large numbers applied with $(\Gamma_k)_{k\in\N^*}$, $GW^{\mu}$-a.s,
\begin{align}\prod\limits_{i=1}^{\Gamma_k}P_i= \exp\bigg((1-\mu(1))(k+o(k))\E_{GW^{\mu}}\left[\ln\left(P_1 \right) \right] \bigg).\label{equivalent}
\end{align}
Besides, as $W>W_c(\mu)$, we know that $\tilde{\mathcal{R}}(u)<+\infty$ for every $u\in V$, $\P_{\mu,W}$ a.s. Consequently, by monotone convergence theorem, $$-\E_{GW^{\mu}}\left[\ln(P_1) \right]=-\E_{GW^{\mu}}\left[\ln\left(\nu_{V}^W\left(\frac{c_{\alpha}}{A_{L_1}^{\alpha}}+\frac{c_{\alpha}\tilde{\mathcal{R}}(L_1)^{\alpha}}{A_{L_1}^{2\alpha}}>C  \right) \right) \right]$$ can be made as large as we want by making $C$ go toward infinity.
Therefore, there exists $C(\alpha)>0$ such that
\begin{align}
\ln\left(c_{\alpha}Q(W,-2\alpha) \right)+(1-\mu(1))\E_{GW^{\mu}}\left[\ln(P_1) \right]<0.\label{tauxdecr}
\end{align}
Hence, for every $\alpha>0$, using (\ref{tauxdecr}) and (\ref{equivalent}) in (\ref{egacruc}) with $C=C(\alpha)$ implies that, $GW^{\mu}$-a.s,
\begin{align}\label{Rborne}
I_{\alpha}:=\E_{\nu_V^W}\left[ \tilde{\mathcal{R}}(o)^{\alpha}\right]<+\infty.
\end{align}
\textbf{Step 3:} By (\ref{Rborne}), we can control any moment of $\tilde{\mathcal{R}}(o)$. Together with Lemma \ref{resistance}, this implies that for every $\alpha>0$, for every $n\in\N^*$, $GW^{\mu}$-a.s,
\begin{align}
\E_{\nu_V^W}\left[\tilde{G}_n(o,o)^{\alpha} \right]=\E_{\nu_V^W}\left[\mathcal{R}(0\longleftrightarrow\delta_n )^{\alpha}\right]\leq W^{\alpha}\E_{\nu_V^W}\left[ \tilde{\mathcal{R}}(o)^{\alpha}\right]=W^{\alpha} I_{\alpha}<+\infty.\label{inegatrans1}
\end{align}
Let $p>1$. By Lemma \ref{lienmoments}, for every $n\in\N^*$, $GW^{\mu}$-a.s,
$$\E_{\nu_V^W}\left[\hat{G}_n(o,o)^{p/2}\right]=\E_{\nu_V^W}\left[F_{p/2}(\tilde{G}_n(o,o)) \right] $$
where $F_{p/2}(x)\sim a_{p/2}x^{p/2-1/2}$. Therefore, together with (\ref{inegatrans1}), this shows there exists positive constants $K_1$ and $K_2$ such that for every $n\in\N^*$, $GW^{\mu}$-a.s,
\begin{align}
\E_{\nu_V^W}\left[\hat{G}_n(o,o)^{p/2}\right]&\leq K_1+K_2\E_{\nu_V^W}\left[\tilde{G}_n(o,o)^{(p-1)/2}\right]\nonumber\\
&\leq K_1+K_2WI_{(p-1)/2}.\label{inegatrans2}
\end{align}
By Lemma \ref{BDG}, it implies that, $GW^{\mu}$-a.s,
$$\underset{n\in\N^*}\sup\hspace{0.2 cm}\E_{\nu_V^W}\left[\psi_n(o)^p \right]<+\infty.$$
\end{proof}
\begin{rem}
In the proof of Theorem \ref{bornitude}, identity (\ref{equationinlaw}) shows that the distribution of $\hat{G}(o,o)$ is directly linked to the solution of the equation in law
$$\tilde{\mathcal{R}}(o)=\frac{1}{\sum\limits_{\cev{i}=o}\frac{A_i^2 }{A_i+\tilde{\mathcal{R}}(i)}}. $$
A non-trivial solution to this equation must exist in the transient phase. However, we do not know how to express this solution with standard distributions and if it is even possible. 
\end{rem}
\section{The subcritical phase}
\subsection{Proof of Theorem \ref{moments}}

In the study of the transient phase, we used the fact that the asymptotic behaviour of $(\psi_n(o))_{n\in\N}$ is related to the effective resistance associated with the environment of the VRJP. We will also use this crucial property in the recurrent phase. In order to study the effective resistance of the VRJP between $o$ and the level $n$, we will use techniques coming from the area of branching random walks. Indeed the fact that the environment of the VRJP on trees can be expressed as products of independent Inverse Gaussian random variables along branches of the tree makes our situation very similar to branching random walks.
\begin{proof}[Proof of Theorem \ref{moments}]
\textbf{Step 1:} For every vertex $x$ in the Galton-Watson tree $V$, let us define
$$S(x)=-\sum\limits_{o<u\leq x} \ln(A_u).$$
We recall that $f_{m,W}(t)=\ln\left(m Q(W,t)\right)$ for every $t\in\R$. $f_{m,W}$ is the Laplace transform associated with the branching random walk $\{(x,S(x)),x\in V\}$. In particular, remark that $\{(x,S(x)),x\in V\}$ satisfies  (\ref{fonctionbiendef}). By assumption $A_3$, it satisfies also (\ref{petitsmoments}). Remark that $f_{m,W}(0)=\ln(m)>0$ because $m>1$ by assumption $A_1$. Moreover, this is easy to check that $f_{m,W}$ is stricly convex, strictly decreasing on $[0,1/2]$ and strictly increasing on $[1/2,+\infty[$. In addition, the support of the point process $\mathcal{L}$ which is associated with $\{(x,S(x)),x\in V\}$ is $\R$ because the support of an Inverse Gaussian distribution is $\R_+^*$.
Therefore, by Lemma \ref{texist} and Lemma \ref{tunique}, there exists a unique $t^*(m,W)>0$ such that
$$-\tau(m,W):=f_{m,W}'(t^*(m,W))=\frac{f_{m,W}(t^*(m,W))}{t^*(m,W)}.$$
For every $x\in V$, we define $$\tilde{S}(x):=t^*(m,W)S(x)+f_{m,W}(t^*(m,W))|x|=t^*(m,W)\bigg(S(x)-\tau(m,W)|x|\bigg).$$
By definition of $t^*(m,W)$, the branching random walk $\{(x,\tilde{S}(x)),x\in V\}$ satisfies (\ref{boundary}). Consequently, with the branching random walk $\tilde{S}$, we are allowed to use the results of Hu and Shi, that is, Propositions \ref{hushi} and \ref{petitsmomentsbranch}. Moreover $W<W_c(\mu)$. By Proposition \ref{pointcritloc}, this is equivalent to say that $Q(W,1/2)<1/m$. Therefore, $f_{m,W}(1/2)<0$. Thus, by Proposition \ref{tunique}, $t^*(m,W)<1/2$ and $\tau(m,W)>0$.
Now, we are ready to estimate the moments of $(\psi_n(o))_{n\in\N}$. By Lemma \ref{momentspsi}, we only have to control $\E_{\mu,W}\left[ \psi_n(o)^p\right]$ when $p>1$ or $p\in]0,\tau(m,W)[$.~\\
\textbf{Step 2: lower bound in (i).}
By Lemma \ref{resistance}, we know that for every $n\in\N$, $$\tilde{G}_n(o,o)=\mathcal{R}(o\longleftrightarrow\delta_n)$$ where $\mathcal{R}(o\longleftrightarrow\delta_n)$ is the effective resistance between $o$ and $\delta_n$ with conductances $c$.
Recall that if $i\in V\backslash\{o\}$, then
$$c(i,\cev{i})=WA_i^{-1}\prod\limits_{o<u\leq i}  A_u^2.$$
By the Nash-Williams inequality (see 2.15 in \cite{LP}), for every $n\in\N^*$, $\P_{\mu,W}$-a.s,
\begin{align}
\tilde{G}_n(o,o)\geq \frac{1}{W\sum\limits_{|x|=n}A_x^{-1}\prod\limits_{o<y\leq x} A_y^2}.\label{inegarecu1}
\end{align}
Let $p>0$. It holds that, for every $n\in\N^*$
\begin{align}
\E_{\mu,W}\left[ \tilde{G}_n(o,o)^{p/2}\right]&\geq \frac{1}{W^{p/2}}\E_{\mu,W}\left[\left(\sum\limits_{|x|=n}A_x^{-1}\prod\limits_{o<y\leq x} A_y^2 \right)^{-{p/2}} \right]\nonumber\\
&\geq \frac{1}{W^{p/2}}\E_{\mu,W}\left[\underset{|x|=n}\min \hspace{0.1 cm}A_x^{p/2}\times \left(\sum\limits_{|x|=n}\prod\limits_{o<y\leq x} A_y^2 \right)^{-{p/2}} \right]\nonumber\\
&=\frac{1}{W^{p/2}}\E_{\mu,W}\left[\underset{|x|=n}\min \hspace{0.1 cm}A_x^{p/2}\times \left(\sum\limits_{|x|=n}e^{-2S(x)}\right)^{-{p/2}} \right]\nonumber\\
&=\frac{1}{W^{p/2}}e^{p\tau(m,W)n}\E_{\mu,W}\left[\underset{|x|=n}\min \hspace{0.1 cm}A_x^{p/2}\times \mathcal{W}_{n,{2/t^*(m,W)}}^{-{p/2}} \right]\label{inegarecu1bis}
\end{align}
where for every $\beta>1$,
$$\mathcal{W}_{n,\beta}=\sum\limits_{|x|=n}e^{-\beta \tilde{S}(x)}. $$
By (\ref{liminf}) in Lemma \ref{hushi}, as $2/t^*(m,W)>4>1$, we know that, $\P_{\mu,W}$-a.s,
$$\underset{n\rightarrow+\infty}\limsup \hspace{0.2 cm}\frac{\ln\left( \mathcal{W}_{n,2/t^*(m,W)}\right)}{\ln(n)}=-1/t^*(m,W). $$
Therefore, $\P_{\mu,W}$-a.s,
\begin{align}\label{inegarecu2}
\mathcal{W}_{n,{2/t^*(m,W)}}^{-{p/2}}\geq n^{p/(2t^*(m,W))+o(1)}.
\end{align}
Moreover, for every $n\in\N^*$,
\begin{align}
\P_{\mu,W}\left(\underset{|x|=n}\min \hspace{0.1 cm}A_x<n^{-2} \right)&=\P_{\mu,W}\left(\bigcup\limits_{|x|=n}\{A_x<n^{-2} \}\right)\nonumber\\
&\leq \E_{GW^{\mu}}\left[Z_n\nu_{V}^W\left(A< n^{-2} \right) \right]\nonumber
\end{align}
where $A$ has an Inverse Gaussian distribution with parameter $(1,W)$ and $Z_n=\sum\limits_{|x|=n} 1$. In addition, the cumulative distribution function of an Inverse Gaussian random variable decreases exponentially fast at $0$. Therefore there exists $\lambda>0$ such that for every $n\in\N^*$,
\begin{align}
\P_{\mu,W}\left(\underset{|x|=n}\min \hspace{0.1 cm}A_x<n^{-2} \right)&\leq e^{-\lambda n^2}\E_{GW^{\mu}}\left[ Z_n\right]\nonumber\\
&\leq m^n e^{-\lambda n^2} \label{borelca}
\end{align}
which is summable. Therefore, by Borel-Cantelli lemma, $\P_{\mu,W}$-a.s,
\begin{align}\label{inegarecu3}
\underset{|x|=n}\min \hspace{0.1 cm}A_x^{p/2}\geq n^{-p+o(1)}.
\end{align}
Consequently, using (\ref{inegarecu3}) and (\ref{inegarecu2}) and Fatou's lemma, we infer that
\begin{align}\label{inegarecu4}
\E_{\mu,W}\left[\underset{|x|=n}\min \hspace{0.1 cm}A_x^{p/2}\times \mathcal{W}_{n,{2/t^*(m,W)}}^{-{p/2}} \right]\geq n^{p/(2t^*(m,W))-p+o(1)}.
\end{align}
Then (\ref{inegarecu4}) and (\ref{inegarecu1bis}) imply that,
\begin{align}
\E_{\mu,W}\left[ \tilde{G}_n(o,o)^{p/2}\right]\geq e^{p\tau(m,W)n+o(n)}.
\end{align}
Together with Lemma \ref{BDG} and Lemma \ref{lienmoments}, this yields
\begin{align}
\E_{\mu,W}\left[ \psi_n(o)^{1+p}\right]\geq e^{p\tau(m,W)n+o(n)}.
\end{align}
\textbf{Step 3: upper bound in (i).}
This part of the proof is partially inspired from \cite{FHS}. For every $n\in\N^*$, let us denote by $\mathcal{C}(o\longleftrightarrow\delta_n)$ the effective conductance between $o$ and $\delta_n$ with respect to conductances $c_n$.  (See subsection \ref{subsecresi} for the definition of the conductances $c$ and $c_n$.)
By Lemma \ref{resistance}, for every $n\in\N^*$, 
\begin{align}
\mathcal{C}(o\longleftrightarrow\delta_n)=\tilde{G}_n(o,o)^{-1}.\label{egaconduct}
\end{align}
Now, we introduce $(\tilde{Z}_k)_{k\in\N^*}$ a Markov chain on $V$ with conductances $c$ starting from $o$ (which is actually the discrete-time process associated with the VRJP). When we want to integrate only with respect to this Markov chain, we use the notations $P_{c,o}$ and $E_{c,o}$.  By definition of the effective conductance, we know that 
\begin{align}\label{formuleconduct}
\mathcal{C}(o\longleftrightarrow\delta_n)=W\sum\limits_{\cev{i}=o}A_i \times P_{c,o}(\tau_n<\tau_o^+)\geq W\sum\limits_{\cev{i}=o}A_i\times\underset{|x|=n}\max \hspace{0.1 cm}P_{c,o}(\tau_x<\tau_o^+)
\end{align}
where $\tau_n=\inf\{k\in\N, |\tilde{Z}_k|=n\}$, $\tau_x=\inf\{k\in\N, \tilde{Z}_k=x\}$ and $\tau_o^+=\inf\{k\in\N^*, \tilde{Z}_k=o\}$.
For every $x\in V\backslash\{o\}$, we define $x_1$ the unique child of $o$ which is an ancestor of $x$. By standard computations, for every $n\in\N^*$, for every $x$ such that $|x|=n$,
\begin{align}
W\sum\limits_{\cev{i}=o}A_i \times P_{c,o}\left(\tau_x<\tau_0^+ \right)&=\frac{\sum\limits_{\cev{i}=o}A_iA_{x_1}^{-1}}{\sum\limits_{o<u\leq x} c(u,\cev{u})^{-1}}\displaystyle\nonumber\\
&\geq \frac{1}{\sum\limits_{o<u\leq x} c(u,\cev{u})^{-1}}.\label{formulerecu1}
\end{align}
By (\ref{formulerecu1}) and the expression of $c$, we infer that
\begin{align}
W\sum\limits_{\cev{i}=o}A_i \times P_{c,o}\left(\tau_x<\tau_0^+) \right)&\geq\frac{W}{\sum\limits_{o<u\leq x} A_{u}\prod\limits_{o<v\leq u}A_{v}^{-2}}\nonumber\\
&\geq\frac{W}{\sum\limits_{o<u\leq x}A_{u}e^{2S(u)}}\nonumber\\
&\geq W\frac{e^{-2S_m(x)}}{n}\times\underset{|z|\leq n}\min A_z^{-1}\label{formulerecu2}
\end{align}
where $S_m(x)=\underset{o<u\leq x}\max \hspace{0.1 cm}S(u)$.
Therefore, combining identities (\ref{formulerecu2}), (\ref{formuleconduct}) and (\ref{egaconduct}), we get for every $n\in\N^*$, $\P_{\mu,W}$-a.s,
\begin{align}
\tilde{G}_n(o,o)\leq \frac{n}{W}\times \underset{|z|\leq n}\max\hspace{0.1 cm}A_z\times e^{2\underset{|x|=n}\min\hspace{0.1 cm}S_m(x)}. \label{formulerecu3}
\end{align}
Moreover, as $\tau(m,W)>0$, it holds that for every $x\in V$,
\begin{align}
S_m(x)&=\underset{o<u\leq x}\max \hspace{0.1 cm}S(u)\nonumber\\
&=\underset{o<u\leq x}\max \hspace{0.1 cm} \tilde{S}(u)/t^*(m,W)+\tau(m,W)|u|\nonumber\\
&\leq \tau(m,W)|x|+(1/t^*(m,W))\underset{o<u\leq x}\max \hspace{0.1 cm} \tilde{S}(u)\nonumber\\
&= \tau(m,W)|x|+(1/t^*(m,W))\tilde{S}_m(x)\label{formulerecu4}
\end{align}
where $\tilde{S}_m(x)=\underset{o<u\leq x}\max \hspace{0.1 cm}\tilde{S}(u)$.
Combining (\ref{formulerecu3}) and (\ref{formulerecu4}), it holds that for every $n\in\N^*$, $\P_{\mu,W}$-a.s,
\begin{align}
\tilde{G}_n(o,o)\leq \frac{n}{W}\times \underset{|z|\leq n}\max\hspace{0.1 cm}A_z\times e^{2\tau(m,W)n} \times e^{2/t^*(m,W)\underset{|x|=n}\min\hspace{0.1 cm}\tilde{S}_m(x)}.\label{formulerecu5}
\end{align}
Let $p>0$. By (\ref{formulerecu5}) and Cauchy-Schwarz inequality, for every $n\in\N^*$,
\begin{align}
\E_{\mu,W}\left[\tilde{G}_n(o,o)^{p/2} \right]&\leq \frac{n^{p/2}}{W^{p/2}}e^{p\tau(m,W)n}\E_{\mu,W}\left[\underset{|z|\leq n}\max\hspace{0.1 cm}A_z^{p/2}\times e^{p/t^*(m,W)\underset{|x|=n}\min\hspace{0.1 cm}\tilde{S}_m(x)} \right]\nonumber\\
&\leq  \frac{n^{p/2}}{W^{p/2}}e^{p\tau(m,W)n}\underbrace{\E_{\mu,W}\left[\underset{|z|\leq n}\max\hspace{0.1 cm}A_z^{p}\right]^{1/2}}_{(a)}\underbrace{\E_{\mu,W}\left[ e^{2p/t^*(m,W)\underset{|x|=n}\min\hspace{0.1 cm}\tilde{S}_m(x)} \right]^{1/2}}_{(b)}.\label{formulerecu51}
\end{align}
If we show that $(a)$ and $(b)$ have a subexponential growth, it gives the good upper bound for $\E_{\mu,W}\left[\tilde{G}_n(o,o)^{p/2} \right]$. In order to majorize $(a)$, let us introduce a function $h_p$ on $\R_+$ which is increasing, convex, bijective and such that there exists $\gamma_p>0$ such that $h_p(x)=e^{(W/4)x^{1/p}}$ for every $x>\gamma_p$. Such a function does clearly exist.
By Jensen's inequality, for every $n\in\N^*$, it holds that
\begin{align}
h_p\left(\E_{\mu,W}\left[\underset{|z|\leq n}\max\hspace{0.1 cm}A_z^{p}\right] \right)&\leq \E_{\mu,W}\left[\underset{|z|\leq n}\max\hspace{0.1 cm}h_p(A_z^{p})\right]\nonumber\\
&\leq h_p(\gamma_p)+\E_{\mu,W}\left[\underset{|z|\leq n}\max\hspace{0.1 cm}e^{(W/4) A_z} \right]\nonumber\\
&\leq  h_p(\gamma_p)+\E_{\mu,W}\left[\sum\limits_{|z|\leq n}e^{(W/4) A_z}\right]\nonumber\\
&\leq h_p(\gamma_p)+(m-1)^{-1}m^{n+1}\E_{\mu,W}\left[e^{(W/4) A} \right]\nonumber
\end{align}
where $A$ is an Inverse Gaussian distribution with parameters $(1,W)$. Remark that $\E_{\mu,W}\left[e^{(W/4) A} \right]<+\infty$. Thus, there exist positive constants $C_1$ and $C_2$ such that for every $n$ big enough,
\begin{align}
\E_{\mu,W}\left[\underset{|z|\leq n}\max\hspace{0.1 cm}A_z^{p}\right]&\leq h_p^{-1}\left(C_1+C_2m^n \right)\nonumber\\
&\leq \left(\frac{4}{W}\ln\left(C_1+C_2m^n  \right) \right)^p.\label{casint}
\end{align}
Consequently, $(a)$ in (\ref{formulerecu51}) has a subexponential growth. Now, let us look at $(b)$ in (\ref{formulerecu51}). Let us define $a^*:=2p/t^*(m,W)$. Let $\varepsilon>0$. Then, remark that for every $n\in\N^*$,
\begin{align}
\displaystyle(b)&\leq e^{na^*\varepsilon}+\E_{\mu,W}\left[e^{a^*\underset{|x|=n}\min\hspace{0.1 cm}\underset{o<u\leq x}\max\hspace{0.2 cm}\tilde{S}(u)} \textbf{1}\left\{\underset{|x|=n}\min\hspace{0.1 cm}\underset{o<u\leq x}\max \tilde{S}(u)\geq \varepsilon n\right\}\right]\nonumber\\
&\leq e^{na^*\varepsilon}+\underbrace{\P_{\mu,W}\left(\underset{|x|=n}\min\hspace{0.1 cm}\underset{o<u\leq x}\max \tilde{S}(u)\geq \varepsilon n\right)^{1/2}}_{(c)}\E_{\mu,W}\left[\sum_{k=1}^n\sum\limits_{|x|=k}e^{2a^*\tilde{S}(x)}\right]^{1/2}.\label{formulerecu6}
\end{align}
However the term
\begin{align}
\E_{\mu,W}\left[\sum_{k=1}^n\sum\limits_{|x|=k}e^{2a^*\tilde{S}(x)}\right]=\sum\limits_{k=1}^n\E_{\mu,W}\left[\sum\limits_{|x|=1}e^{2a^*\tilde{S}(x)}\right]^k\nonumber
\end{align}
grows exponentially fast when $n$ goes toward infinity. Therefore we only have to prove that $(c)$ decreases faster than any exponential function. Let $\delta>0$. The crucial point is to remark that for every $n\in\N^*$,
\begin{align}
\P_{\mu,W}\left(\underset{|x|=n}\min\hspace{0.1 cm}\underset{o<u\leq x}\max \tilde{S}(u)\geq \varepsilon n\right)&\nonumber\\
&\hspace{-4 cm}\leq \P_{\mu,W}\left(\underset{|z|=\lfloor\delta n\rfloor}\max\hspace{0.1 cm}\underset{o<u\leq z}\max \tilde{S}(u)\geq \varepsilon n/2\right)\nonumber\\
&\hspace{-4 cm}\hspace{0.4cm}+\P_{\mu,W}\left(\forall z, |z|=\lfloor\delta n\rfloor, \underset{|x|_z=\lfloor(1-\delta)n\rfloor}\min\hspace{0.1 cm}\underset{z<u\leq x}\max \hspace{0.2 cm}\tilde{S}_z(u)+\tilde{S}(z)\geq \varepsilon n\cap \tilde{S}(z)\leq \varepsilon n/2\right)\nonumber
\end{align}
where $\tilde{S}_z(u)=\tilde{S}(u)-\tilde{S}(z)$.
Therefore, for every $n\in\N^*$,
\begin{align}
\hspace{-0.3 cm}\P_{\mu,W}\left(\underset{|x|=n}\min\hspace{0.1 cm}\underset{o<u\leq x}\max \tilde{S}(u)\geq \varepsilon n\right)&\leq\P_{\mu,W}\left(\underset{|z|=\lfloor\delta n\rfloor}\max\hspace{0.1 cm}\underset{o<u\leq z}\max \tilde{S}(u)\geq \varepsilon n/2\right)\nonumber\\
&\hspace{0.4 cm}+\P_{\mu,W}\left(\forall z, |z|=\lfloor\delta n\rfloor, \underset{|x|_z=\lfloor(1-\delta)n\rfloor}\min\hspace{0.1 cm}\underset{z<u\leq x}\max \hspace{0.2 cm}\tilde{S}_z(u)\geq \varepsilon n/2\right).\label{formulerecu7}
\end{align}
By the branching property, for every $n\in\N^*$ and hypothesis $A_2$,
\begin{align}
\P_{\mu,W}\left(\forall z, |z|=\lfloor\delta n\rfloor, \underset{|x|_z=\lfloor(1-\delta)n\rfloor}\min\hspace{0.1 cm}\underset{z<u\leq x}\max \hspace{0.2 cm}\tilde{S}_z(u)\geq \varepsilon n/2\right)&\nonumber\\
&\hspace{-8 cm}\leq \P_ {\mu,W}\left(\underset{|x|=\lfloor (1-\delta )n\rfloor}\min\hspace{0.1 cm}\underset{o<u\leq x}\max \tilde{S}(u)\geq \varepsilon n/2 \right)^{2^{\lfloor\delta n\rfloor}}.\nonumber
\end{align}
Therefore, using inequality (2.12) in \cite{FHS}, there exists $\eta>0$ such that for every integer $n$ which is large enough,
\begin{align}
\P_{\mu,W}\left(\forall z, |z|=\lfloor\delta n\rfloor, \underset{|x|_z=\lfloor(1-\delta)n\rfloor}\min\hspace{0.1 cm}\underset{z<u\leq x}\max \hspace{0.2 cm}\tilde{S}_z(u)\geq \varepsilon n/2\right)\leq \left(1-e^{-\eta n^{1/3}}\right)^{2^{\lfloor\delta n\rfloor}}\label{formulerecu8}
\end{align}
which decreases faster than any exponential function. Now, let $t>0$. By Markov inequality, for every $n\in\N^*$,
\begin{align}
\P_{\mu,W}\left(\underset{|z|=\lfloor\delta n\rfloor}\max\hspace{0.1 cm}\underset{o<u\leq z}\max \tilde{S}(u)\geq \varepsilon n/2\right)&\leq e^{-n\varepsilon t/2}\sum\limits_{k=1}^{\delta n}\E_{\mu,W}\left[\sum\limits_{|x|=k}e^{t\tilde{S}(x)}\right]\nonumber\\
&=e^{-n\varepsilon t/2}\sum\limits_{k=1}^{\delta n} r(t)^k\nonumber
\end{align}
where $r(t)=\E_{\mu,W}\left[\sum\limits_{|x|=1}e^{t\tilde{S}(x)}\right]$. Consequently, there exists a constant $C>0$ such that for every $n\in\N^*$,
\begin{align}
\P_{\mu,W}\left(\underset{|z|=\lfloor\delta n\rfloor}\max\hspace{0.1 cm}\underset{o<u\leq z}\max \tilde{S}(u)\geq \varepsilon n/2\right)&\leq C\exp\left(n\left(\delta\ln(r(t))-t\varepsilon/2 \right) \right).\label{formulerecu9}
\end{align}
If we take $t$ large enough and $\delta$ small enough, we get an exponential decay with a decreasing rate which is as large as we want. Therefore, combining (\ref{formulerecu9}), (\ref{formulerecu8}) and (\ref{formulerecu7}), we know that $(c)$ in (\ref{formulerecu6}) decreases faster than any exponential function. Consequently, by (\ref{formulerecu6}), $(b)$ has a subexponential growth. Moreover, we also proved that $(a)$ has subexponential growth. By (\ref{formulerecu51}), this yields
\begin{align}
\E\left[\tilde{G}_n(o,o)^{p/2} \right]\leq e^{p\tau(m,W)n+o(n)}.\label{estimationtildeg}
\end{align}
Together with Lemma \ref{BDG} and Lemma \ref{lienmoments}, this yields
\begin{align}
\E_{\mu,W}\left[ \psi_n(o)^{1+p}\right]\leq e^{p\tau(m,W)n+o(n)}.
\end{align}~\\
\textbf{Step 4: upper bound in (ii).}
For every $x\in V$, let us denote by $\nu_x$ the number of children of $x$. For every $n\in\N^*$, by definition of $\psi_n(o)$ we know that
\begin{align}
\psi_n(o)=W\sum\limits_{|x|=n}\hat{G}_n(o,x)\nu_x.\nonumber
\end{align}
Moreover, for every $x\in V$, for every $n\in\N^*$, $\hat{G}_n(o,x)\leq \hat{G}(o,x)$. This can be proved thanks to path expansions. (See Lemma  \ref{pathexpo}.) Consequently, for every $n\in\N^*$,
\begin{align}
\psi_n(o)\leq W\sum\limits_{|x|=n}\hat{G}(o,x)\nu_x.\label{lastbound1}
\end{align}
As $W<W_c(\mu)$, by Lemma \ref{lienG_U}, for every $n\in\N^*$, $\P_{\mu,W}$-a.s, it holds that
\begin{align}
\psi_n(o)&\leq W\hat{G}(o,o)\sum\limits_{|x|=n}e^{U_x}\nu_x\nonumber\\
&=W\hat{G}(o,o)\sum\limits_{|x|=n}\prod\limits_{o<u\leq x} A_u\nu_x.\label{lastbound2}
\end{align}
Together with the notation introduced in step 1 of this proof, we get that for every $n\in\N^*$, $\P_{\mu,W}$-a.s,
\begin{align}
\psi_n(o)&\leq W\hat{G}(o,o)e^{-\tau(m,W)n}\sum\limits_{|x|=n}e^{-\tilde{S}(x)/t^*(m,W)}\nu_x\label{lastbound3}
\end{align}
By identity (\ref{cramer}) and Lemma \ref{limitegtilde}, as $W<W_c(\mu)$, it holds that $\hat{G}(o,o)=\frac{1}{2\gamma}$. Together with (\ref{lastbound3}) this implies that for every $n\in\N^*$, $\P_{\mu,W}$-a.s,
\begin{align}
\psi_n(o)&\leq W\frac{1}{2\gamma}e^{-\tau(m,W)n}\sum\limits_{|x|=n}e^{-\tilde{S}(x)/t^*(m,W)}\nu_x .\label{lastbound4}
\end{align}
Nevertheless, by the construction of the $\beta$-potential introduced in subsection \ref{construction}, we know that $\gamma$, $(\tilde{S}(x))_{|x|=n}$ and $(\nu_x)_{|x|=n}$ are independent and $\gamma$ has a Gamma distribution with parameters $(1/2,1)$. Consequently, for every $p\in]0,t^*(m,W)[$, for every $n\in\N^*$, it holds that
\begin{align}
\E_{\mu,W}\left[\psi_n(o)^p \right]&\leq W^p e^{-p\tau(m,W)n}\int_0^{+\infty}\frac{x^{-p-1/2}}{\sqrt{4^p\pi}}e^{-x}dx\times\E_{\mu,W}\left[\left(\sum\limits_{|x|=n}e^{-\tilde{S}(x)/t^*(m,W)}\nu_x \right)^p \right].\label{oculusreparo1}
\end{align}
For every $p\in]0,1/2[$, we denote $$\kappa_p=W^p \int_0^{+\infty}\frac{x^{-p-1/2}}{\sqrt{4^p\pi}}e^{-x}dx<+\infty.$$
As $t^*(m,W)<1/2<1$, we are allowed to use concavity in (\ref{oculusreparo1}) which implies that for every $p\in]0,t^*(m,W)[$, for every $n\in\N^*$,
\begin{align}
\E_{\mu,W}\left[\psi_n(o)^p \right]&\leq \kappa_p e^{-p\tau(m,W)n}\times\E_{\mu,W}\left[\left(\sum\limits_{|x|=n}e^{-\tilde{S}(x)/t^*(m,W)}\nu_x \right)^{t^*(m,W)} \right]^{p/t^*(m,W)}\nonumber\\
&\leq \kappa_p e^{-p\tau(m,W)n}\times\E_{\mu,W}\left[\sum\limits_{|x|=n}e^{-\tilde{S}(x)}\nu_x^{t^*(m,W)} \right]^{p/t^*(m,W)}.\label{oculusreparo2}
\end{align}
However $(\tilde{S}(x))_{|x|=n}$ and $(\nu_x)_{|x|=n}$ are independent. Therefore, for every $n\in\N^*$ and for every $p\in ]0,t^*(m,W)[$,
\begin{align}
\E_{\mu,W}\left[\psi_n(o)^p \right]&\leq\kappa_p e^{-p\tau(m,W)n}\times\E_{\mu,W}\left[\mathcal{W}_n\right]^{p/t^*(m,W)}\times \E_{\mu,W}\left[\nu^{t^*(m,W)}\right]^{p/t^*(m,W)} \label{oculusreparo3}
\end{align}
where $\nu$ has distribution $\mu$ and $\mathcal{W}_n=\sum_{|x|=n}e^{-\tilde{S}(x)}$. Therefore, as $\mathcal{W}_n$ is a martingale with mean 1, we get that for every $n\in\N^*$ and for every $p\in]0,t^*(m,W)[$,
\begin{align*}
\E_{\mu,W}\left[\psi_n(o)^p \right]&\leq\kappa_p \times  \E_{\mu,W}\left[\nu^{t^*(m,W)}\right]^{p/t^*(m,W)}\times e^{-p\tau(m,W)n}
\end{align*}
In order to conclude the proof, we need the same estimate for $p\in]1-t^*(m,W),1[$. This stems from Lemma \ref{momentspsi}.
\end{proof}
\subsection{Proof of Theorem \ref{rate}}
First, we need the following lemma which establishes a link "in law" between $\psi_n(o)$ and the effective resistance associated with the VRJP.
\begin{lem}\label{couplinglemma}
Let $V$ be a  rooted tree with root $o$. Let $W>0$. Then, under $\nu_V^W$, it holds that for every $n\in\N^*$,
$$\psi_n(o)^2\times2\gamma\times(1+2\gamma\mathcal{R}(o\longleftrightarrow \delta_n))\overset{law}=2\Gamma(1/2,1)$$
where $\gamma$ is the $\Gamma(1/2,1)$ random variable which was used to define the potential $\beta$ on a tree  (see identity (\ref{tildage})) and $\mathcal{R}(o\longleftrightarrow\delta_n)$ is the effective resistance from $o$ to $\delta_n$ associated with the conductances $c$ defined in Proposition \ref{mixturetree}.

\end{lem}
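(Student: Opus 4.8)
The plan is to convert the claimed identity into a statement about the conditional law, given $\mathcal{G}_n$, of the effective resistance and of $\gamma$, and then to close it with an explicit Inverse Gaussian / Gamma density computation. Write $Y_n:=\psi_n(o)/\hat{G}_n(o,o)$, which is $\mathcal{G}_n$-measurable (as shown in the proof of Lemma~\ref{loicondipsi}), and set $T:=\mathcal{R}(o\longleftrightarrow\delta_n)^{-1}=\tilde{G}_n(o,o)^{-1}\in(0,+\infty)$ by Lemma~\ref{resistance}. The Cramér identity \eqref{cramer} gives $\hat{G}_n(o,o)=\tilde G_n(o,o)/(1+2\gamma\tilde G_n(o,o))=1/(T+2\gamma)$ and $1+2\gamma\,\mathcal{R}(o\longleftrightarrow\delta_n)=(T+2\gamma)/T$; since trivially $\psi_n(o)=Y_n\hat{G}_n(o,o)$, the left-hand side of the statement equals
\[
S:=\psi_n(o)^2\cdot 2\gamma\cdot\bigl(1+2\gamma\,\mathcal{R}(o\longleftrightarrow\delta_n)\bigr)=\frac{2\gamma\,Y_n^2}{T(T+2\gamma)}=Y_n^2\Bigl(\tfrac1T-\tfrac1{T+2\gamma}\Bigr).
\]
So it suffices to prove that $S$ has the $2\Gamma(1/2,1)$ density $\tfrac{1}{\sqrt{2\pi}}\,s^{-1/2}e^{-s/2}$ on $(0,\infty)$.

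Next I would identify the conditional law of $(T,\gamma)$ given $\mathcal{G}_n$. By the construction \eqref{tildage}, $\gamma\sim\Gamma(1/2,1)$ is independent of the whole $\beta$-potential, hence of $\sigma(\mathcal{G}_n,T)$; thus, conditionally on $\mathcal{G}_n$, $T$ and $\gamma$ are independent, $\gamma\sim\Gamma(1/2,1)$, and $T$ follows some law $\mu$ on $(0,\infty)$. On the other hand, Lemma~\ref{loicondipsi}(ii) gives $\psi_n(o)\mid\mathcal{G}_n\sim IG(1,Y_n)$, hence $\hat{G}_n(o,o)=\psi_n(o)/Y_n\mid\mathcal{G}_n\sim IG(1/Y_n,1)$, so $V:=T+2\gamma=\hat G_n(o,o)^{-1}$ is, given $\mathcal{G}_n$, distributed as the reciprocal of an $IG(1/Y_n,1)$ variable, whose Laplace transform is $(1+2s)^{-1/2}e^{Y_n(1-\sqrt{1+2s})}$. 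Since $2\gamma$ has Laplace transform $(1+2s)^{-1/2}$ and $V=T+2\gamma$ with $T\perp\gamma$, the law $\mu$ must have Laplace transform $e^{Y_n(1-\sqrt{1+2s})}$, i.e. $T\mid\mathcal{G}_n\sim IG(Y_n,Y_n^2)$.

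Finally I would compute the law of $S$ by conditioning further on $\{V=v\}$: the conditional density of $u:=2\gamma$ is, up to a constant depending only on $v$ and $Y_n$, proportional to the product of the $2\Gamma(1/2,1)$ density at $u$ and the $IG(Y_n,Y_n^2)$ density at $v-u$, which after expanding the Gaussian exponent becomes $\propto u^{-1/2}(v-u)^{-3/2}e^{-Y_n^2/(2(v-u))}$ on $(0,v)$. As $S=\frac{Y_n^2\,u}{v(v-u)}$ is an increasing bijection of $(0,v)$ onto $(0,\infty)$, the change of variables $u=sv^2/(Y_n^2+sv)$, $v-u=vY_n^2/(Y_n^2+sv)$, $du=v^2Y_n^2(Y_n^2+sv)^{-2}\,ds$ makes all powers of $(Y_n^2+sv)$ cancel and turns the exponent into $-Y_n^2/(2v)-s/2$, so the conditional density of $S$ given $\sigma(\mathcal{G}_n)\vee\sigma(V)$ is $\propto s^{-1/2}e^{-s/2}$, i.e. the $2\Gamma(1/2,1)$ density, for every value of $v$. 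Hence $S$ is independent of $(\mathcal{G}_n,V)$ and $S\sim2\Gamma(1/2,1)$, which is the claim. (Alternatively, one integrates $\gamma$ and $T$ out directly against their densities; after the substitution $r=T+2\gamma$ and the identity $\int_0^\infty x^{-1/2}e^{-px-q/x}\,dx=\sqrt{\pi/p}\,e^{-2\sqrt{pq}}$ the conditional Laplace transform of $S$ collapses to $(1+2\lambda)^{-1/2}$, the decisive cancellation being $\tfrac12 Y_n^2(1+2\lambda)-\lambda Y_n^2=\tfrac12 Y_n^2$.)

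The step I expect to be the genuine obstacle is identifying the conditional law of $T=\mathcal{R}(o\longleftrightarrow\delta_n)^{-1}$ given $\mathcal{G}_n$: the effective resistance is a complicated functional of the whole finite subtree, so there is no direct route to its conditional law, and one must extract it from Lemma~\ref{loicondipsi} together with \eqref{cramer} and the independence of $\gamma$ from the environment. Once this is established, the remaining density computation is delicate but purely mechanical.
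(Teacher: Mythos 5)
Your argument is correct, but it follows a genuinely different route from the paper. The paper proves the lemma by coupling with a potential $\beta'$ on the wired graph $\tilde{V}_n$ distributed as $\tilde{\nu}_{\tilde{V}_n}^{\tilde{P}_n,0}$: it invokes Theorem 3 of \cite{SZT} (the decomposition $\gamma'=1/(2G'(o,o))\sim\Gamma(1/2,1)$ independent of $\tilde{\beta}'$, and $1/G'(\delta_n,\delta_n)\overset{law}=2\Gamma(1/2,1)$) together with Proposition 8 of \cite{sabotzeng} (the identity $\frac{1}{2\gamma'}=\hat{G}'_n(o,o)+G'(\delta_n,\delta_n)\psi'_n(o)^2$), and then transfers the resulting algebraic identity back to the tree potential via the equality in law of $(\tilde{\beta}'_{V_n},\gamma')$ and $(\tilde{\beta}_{V_n},\gamma)$ given by the restriction lemmas; no density computation is needed. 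You instead stay entirely under $\nu_V^W$ with the tree construction: you reduce the left-hand side to $\frac{2\gamma Y_n^2}{T(T+2\gamma)}$ via \eqref{cramer} and Lemma \ref{resistance}, use Lemma \ref{loicondipsi} to get $\hat{G}_n(o,o)\mid\mathcal{G}_n\sim IG(1/Y_n,1)$, deconvolve the Laplace transform of $T+2\gamma$ to identify $T\mid\mathcal{G}_n\sim IG(Y_n,Y_n^2)$, and finish with an explicit change of variables; I checked the Laplace transforms, the Jacobian $du=v^2Y_n^2(Y_n^2+sv)^{-2}ds$ and the cancellation of all powers of $(Y_n^2+sv)$, and they are as you claim. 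What your route buys is self-containedness (it re-derives by hand the Gamma/IG structure that the paper imports from \cite{SZT} and \cite{sabotzeng}), the byproduct identification of the conditional law of the effective conductance $T$ given $\mathcal{G}_n$, and the slightly stronger conclusion that the left-hand side is independent of $(\mathcal{G}_n,\mathcal{R}(o\longleftrightarrow\delta_n))$; what the paper's route buys is brevity and robustness, since it needs no explicit densities. One phrase you should correct: $\gamma$ is not independent of ``the whole $\beta$-potential'' (indeed $\beta_o=\tilde{\beta}_o+\gamma$ by \eqref{tildage}); what you need, and what is true, is that $\gamma$ is independent of $\tilde{\beta}$ (equivalently of the family $(A_u)$), and since both $\mathcal{G}_n=\sigma((\beta_i)_{i\in V_n\backslash\{o\}})=\sigma((\tilde{\beta}_i)_{i\in V_n\backslash\{o\}})$ and $T=\tilde{G}_n(o,o)^{-1}$ are $\tilde{\beta}_{V_n}$-measurable, the independence of $\gamma$ from $\sigma(\mathcal{G}_n,T)$ that your deconvolution step requires does hold.
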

\begin{proof}[Proof of Lemma \ref{couplinglemma}]
Let $n\in\N$. The proof is based on a coupling with a potential on the wired graph $\tilde{V}_n$. (See subsection \ref{subsecresi} for the definition of the wired graph.) Recall that, under $\nu_V^W$, thanks to (\ref{tildage}), the potential $\beta$ can be decomposed as $\beta=\tilde{\beta}+\textbf{1}\{\cdot=o \}\gamma$ where $\gamma$ and $\tilde{\beta}$ are independent. For every $i\in V_n$, we write $\hat{\eta}^{(n)}_i=\sum_{j\sim i,j\notin V_n} W$. Then, recall that $\psi_n(o)=\hat{G}_n\hat{\eta}^{(n)}$. In particular, there exists a deterministic function $F_n$ from $\R^{|V_n|+1}$ into $\R^3$ such that
\begin{align} (\psi_n(o),\tilde{G}_n(o),2\gamma)=F_n(\tilde{\beta}_{V_n},\gamma).\label{coupling1}
\end{align}
Now, let us define a potential $\beta'$ on the wired graph $\tilde{V}_n$ with distribution $\tilde{\nu}_{\tilde{V}_n}^{\tilde{P}_n,0}$ where $\tilde{P}_n$ is the adjacency matrix of the weighted graph $\tilde{V}_n$. We can associate a matrix $H_{\beta'}$ with the potential $\beta'$ in the usual way and the inverse of $H_{\beta'}$ is denoted by $G'$.  We define $\gamma'=1/(2G'(o,o))$ and $\tilde{\beta}'=\beta'-\textbf{1}\{\cdot=o\}\gamma'$. By Theorem 3 in \cite{SZT}, $\gamma'$ is distributed as $\Gamma(1/2,1)$ and is independent of $\tilde{\beta}'$. Let us define the matrix $\tilde{H}_{\beta'}$ in the same way as $H_{\beta'}$ but we replace $2\beta_o'$ by $2\tilde{\beta}_o'$. Moreover, we define $\hat{G}_n'$ and $\tilde{G}_n'$ as the inverse of $(H_{\beta'})_{V_n,V_n}$ and $(\tilde{H}_{\beta'})_{V_n,V_n}$ respectively. Further, let us write $\psi_n'=\hat{G}_n'\hat{\eta}^{(n)}$. Then, by Proposition 8 in \cite{sabotzeng}, it holds that
\begin{align}
\frac{1}{2\gamma'}=G'(o,o)=\hat{G}_n'(o,o)+G'(\delta_n,\delta_n)\psi'_n(o)^2.\label{remcor}
\end{align}
The equality \eqref{remcor} can be proved by means of the results about path expansions given by Lemma \ref{pathexpo}.
By \eqref{remcor}, we get
\begin{align}
\frac{\psi'_n(o)^2}{1/(2\gamma')-\hat{G}_n'(o,o)}=\frac{1}{G'(\delta_n,\delta_n)}.\label{coupling2}
\end{align}
Besides, by Cramer's formula, $$\frac{1}{2\gamma'}-\hat{G}_n'(o,o)=\frac{1}{2\gamma'}-\frac{\tilde{G}_n'(o,o)}{1+2\gamma'\tilde{G}'_n(o,o)}=\frac{1}{2\gamma'(1+2\gamma'\tilde{G}'_n(o,o))}.$$
Together with (\ref{coupling2}), this yields
\begin{align}
\psi'_n(o)^2\times2\gamma'\times(1+2\gamma'\tilde{G}'_n(o,o))=\frac{1}{G'(\delta_n,\delta_n)}. \label{coupling3}
\end{align}
Further, with the same function $F_n$ as in (\ref{coupling1}), it holds that
\begin{align}
(\psi_n'(o),\tilde{G}'_n(o),2\gamma')=F_n(\tilde{\beta}'_{V_n},\gamma'). \label{coupling4}
\end{align}
Moreover, the joint law of $(\tilde{\beta}'_{V_n},\gamma')$ is the same as the joint law of $(\tilde{\beta}_{V_n},\gamma)$. It stems from the restriction properties in Lemma \ref{restrictionmart} and Lemma \ref{restrictioninfini}. Therefore, combining this with (\ref{coupling1}), (\ref{coupling4}) and (\ref{coupling3}), we obtain that
\begin{align}
\psi_n(o)^2\times2\gamma\times(1+2\gamma \tilde{G}_n(o,o))\overset{law}=\psi'_n(o)^2\times2\gamma'\times(1+2\gamma'\tilde{G}'_n(o,o))=\frac{1}{G'(\delta_n,\delta_n)}.\nonumber
\end{align}
By Theorem 3 in \cite{SZT}, $1/G'(\delta_n,\delta_n)\overset{law}=2\Gamma(1/2,1)$ and by Proposition \ref{resistance}, $\tilde{G}_n(o,o)=\mathcal{R}(o\longleftrightarrow\delta_n)$. This concludes the proof.
\end{proof}
\begin{flushleft}
Now, we are ready to prove Theorem \ref{rate}.
\end{flushleft}
\begin{proof}[Proof of Theorem \ref{rate}]
For every $n\in\N$, it holds that
\begin{align}
\psi_n(o)^2=\frac{1}{2\gamma(1+2\gamma\mathcal{R}(0\longleftrightarrow\delta_n))}\times \Phi_n \label{coupling5}
\end{align}
where $ \Phi_n=\psi_n(o)^2\times2\gamma(1+2\gamma\mathcal{R}(o\longleftrightarrow\delta_n))$.
By Lemma \ref{couplinglemma}, we know that for every $n\in\N$,
$ \Phi_n\overset{law}=2\Gamma(1/2,1)$. Therefore for every $n\in\N$,
\begin{align}
\P_{\mu,W}( \Phi_n<2/n^4)&=\int_0^{1/n^4}\frac{e^{-y}}{\sqrt{\pi y}}dy\nonumber\\
&\leq \frac{1}{\sqrt{\pi}}\int_0^{1/n^4}\frac{dy}{\sqrt{ y}}\nonumber\\
&=\frac{2}{\sqrt{\pi}n^2}\nonumber
\end{align}
which is summable.
Moreover, for every $n\in\N$,
\begin{align}
\P_{\mu,W}( \Phi_n>2n)&=\int_{n}^{+\infty}\frac{e^{-y}}{\sqrt{\pi y}}dy\nonumber\\
&\leq \frac{1}{\sqrt{\pi n}}e^{-n}\nonumber
\end{align}
which is summable. Consequently, by Borel-Cantelli lemma, $\P_{\mu,W}$-a.s, for $n$ large enough,
\begin{align}
\frac{2}{n^4}\leq  \Phi_n\leq 2n.
\end{align}
That is why, in order to conclude, we only have to prove that, $\P_{\mu,W}$-a.s,
$$\mathcal{R}(o\longleftrightarrow\delta_n)=e^{2\tau(m,W)n+o(n)}.$$
Remark that the identity (\ref{inegarecu1bis}) is also true without the expectation and remember from Lemma \ref{resistance} that $\mathcal{R}(o\longleftrightarrow\delta_n)=\tilde{G}_n(0,0)$. Therefore, for every $n\in\N$.

\begin{align}
\mathcal{R}(o\longleftrightarrow\delta_n)\geq\frac{1}{W}e^{2\tau(m,W)n}\times\underset{|x|=n}\min \hspace{0.1 cm}A_x\times \mathcal{W}_{n,{2/t^*(m,W)}}^{-1}.\label{coupling6}
\end{align}
First, $\underset{|x|=n}\min \hspace{0.1 cm}A_x$ has at most polynomial decay $\P_{\mu,W}$-a.s. This can be shown exactly as in (\ref{inegarecu3}). Furthermore, by Proposition \ref{hushi}, $\mathcal{W}_{n,{2/t^*(m,W)}}^{-1}$ has also polynomial asymptotics. Consequently, this proves the lower bound of $\mathcal{R}(o\longleftrightarrow\delta_n)$. More precisely, $\P_{\mu,W}$ almost surely,
$$\mathcal{R}(o\longleftrightarrow\delta_n)\geq e^{2\tau(m,W)n+o(n)}.$$
Now, let us prove the upper bound. By (\ref{formulerecu5}), it holds that
\begin{align}
\mathcal{R}(0\longleftrightarrow\delta_n)\leq \frac{n}{W}\times \underset{|z|\leq n}\max\hspace{0.1 cm}A_z\times e^{2\tau(m,W)n} \times e^{2/t^*(m,W)\underset{|x|=n}\min\hspace{0.1 cm}\tilde{S}_m(x)}.
\end{align}
In the same way as in (\ref{inegarecu3}), $\max\hspace{0.1 cm}\{A_z:|z|\leq n\}$ has at most polynomial growth $\P_{\mu,W}$-a.s.  Moreover, by Theorem 1.4 in \cite{FHS}, there exists some constant $c>0$ such that $\min\hspace{0.1 cm}\{\tilde{S}_m(x):|x|=n\}\sim cn^{1/3}$ $\P_{\mu,W}$-a.s. This concludes the proof.
\end{proof}
\subsection{Proof of Proposition \ref{criticalp}}
\begin{proof}[Proof of Proposition \ref{criticalp}]
Let $m>1$. For every $W>0$ and for every $t>0$, let us define $$F(W,t)=\ln(m Q(W,t)).$$ Obviously, $F\in C^{\infty}\left(\R_+^*\times \R_+^*\right)$. We introduce another function $G$ defined by $$G(W,t)=F(W,t)-t\frac{\partial F }{\partial t}(W,t)$$ for every $(t,W)\in\R_+^*\times\R_+^*$. Moreover, by step 1 in the proof of Theorem \ref{moments}, we know that for every $W>0$, there exists a unique $t^*(m,W)>0$ such that $G(W,t^*(m,W))=0$. Further, for every $(t,W)\in\R_+^*\times\R_+^*$,
\begin{align}
\frac{\partial G}{\partial t}(W,t)&=-t\frac{\partial^2F}{\partial t^2}(W,t)=-t \frac{\E_{\mu,W}\left[A^t \right]\E_{\mu,W}\left[ \ln(A)^2 A^t\right]-\E_{\mu,W}\left[\ln(A)A^t \right]^2}{\E_{\mu,W}\left[A^t\right]^2}\label{critic1}
\end{align}
where $A$ is an Inverse Gaussian distribution with parameters $(1,W)$. From (\ref{critic1}) and Cauchy-Schwarz inequality, we deduce that for every  $(t,W)\in\R_+^*\times\R_+^*$,
\begin{align}
\frac{\partial G}{\partial t}(W,t)&<0.
\end{align}
Therefore, we can apply the implicit function theorem which implies that $W\mapsto t^*(m,W)$ is smooth. By Proposition \ref{pointcritloc}, $W_c(\mu)$ is the unique $W>0$ such that $mQ(W,1/2)=1$. Moreover, for every $W\in\R_+^*$,
\begin{align}
\frac{\partial F}{\partial t}(W,1/2)=0\label{critic1bis}
\end{align}
because the minimum of $t\mapsto Q(W,t)$ is achieved for $t=1/2$.
Consequently, \begin{align}
G(W_c(\mu),1/2)&=F(W_c(\mu),1/2)-(1/2)\frac{\partial F }{\partial t}(W_c(\mu),1/2)\nonumber\\
&=\ln\left(mQ(W_c(\mu),1/2)\right)\nonumber\\
&=0.\nonumber
\end{align}
Therefore,
\begin{align}
t^*(m,W_c(\mu))=1/2.\label{critic2}
\end{align}
Thus, by Taylor expansion in a neighborhood of $W_c(\mu)$, it holds that,
\begin{align}
F(W,t^*(m,W))&= F(W_c(\mu),1/2)+(W-W_c(\mu))\frac{\partial F}{\partial W}(W_c(\mu),1/2)\nonumber\\
&\hspace{0.4 cm}+(t^*(m,W)-1/2)\frac{\partial F}{\partial t}(W_c(\mu),1/2)+o\bigg(W_c(\mu)-W,t^*(m,W)-1/2\bigg)\nonumber\\
&=(W-W_c(\mu))\frac{\partial F}{\partial W}(W_c(\mu),1/2) +o(W_c(\mu)-W) \label{critic3}
\end{align}
where in the last equality, we used the fact that $F(W_c(\mu),1/2)=0$ and (\ref{critic1bis}). Moreover $o(W_c(\mu)-W,t^*(m,W)-1/2)$ becomes $o(W_c(\mu)-W)$ in the last equality because 
$$t^*(m,W)-1/2=t^*(m,W)-t^*(m,W_c(\mu))=O(W_c(\mu)-W)$$
as $t^*(m,\cdot)$ is a smooth function.
Besides, $$\tau(m,W)=-F(W, t^*(m,W))/t^*(m,W)\sim -2 F(W,t^*(m,W))$$ in the neighborhood of $W_c(\mu)$ because $t^*(m,W_c(\mu))=1/2$.  Together with (\ref{critic3}), it yields
\begin{align}
\tau(m,W)\underset{W\rightarrow W_c(\mu)}\sim 2\left(\frac{\partial F}{\partial W}(W_c(\mu),1/2) \right)(W_c(\mu)-W)\label{critic4}
\end{align}
Therefore, we only have to compute $\frac{\partial F}{\partial W}(W_c(\mu),1/2)$ in order to conclude the proof.
Let us recall that for every $W>0$,
\begin{align}
F(W,1/2)=\ln(m)+\frac{1}{2}\ln(W)+ \ln\left(\int_0^{+\infty} \frac{e^{-(W/2)(x+1/x-2)}}{\sqrt{2\pi} x} dx\right).\label{critic5}
\end{align}
Differentiating (\ref{critic5}), we get
\begin{align}
\frac{\partial F}{\partial W}(W,1/2)&=\frac{1}{2W}-\frac{1}{2}\frac{\displaystyle\int_0^{+\infty}(x+1/x-2)(2\pi)^{-1/2} x^{-1}e^{-(W/2)(x+1/x-2)}dx}{\displaystyle\int_0^{+\infty}(2\pi)^{-1/2} x^{-1}e^{-(W/2)(x+1/x-2)}dx}\nonumber\\
&=\frac{1}{2W}-\frac{1}{2}\frac{Q(W,3/2)+Q(W,-1/2)-2Q(W,1/2)}{Q(W,1/2)}\nonumber\\
&=1+\frac{1}{2W}-\frac{Q(W,3/2)}{Q(W,1/2)}.\label{critic6}
\end{align}
In the last equality, we used the fact that $Q(W,3/2)=Q(W,-1/2)$.
Moreover, remark that for every $W>0$,
\begin{align}
Q(W,3/2)&=\int_1^{+\infty}\sqrt{\frac{W}{2\pi}}\frac{(x+1/x)}{x}e^{-(W/2)(x+1/x-2)}dx\nonumber\\
&=\sqrt{\frac{2W}{\pi}}\int_0^{+\infty}\cosh(u)e^{-W(\cosh(u)-1)}du\nonumber\\
&=\sqrt{\frac{2W}{\pi}}e^{W}K_1(W)\nonumber\\
&=\frac{K_1(W)}{K_{1/2}(W)} \label{critic7}
\end{align}
where $K_{\alpha}$ is the modified Bessel function of the second kind with index $\alpha$.
Besides, recall that $mQ(W_c(\mu),1/2)=1$. Now, let us evaluate (\ref{critic6}) at $W=W_c(\mu)$. Together with (\ref{critic7}), this implies
\begin{align}
\frac{\partial F}{\partial W}(W_c(\mu),1/2)=1+\frac{1}{2W_c(\mu)}-m\frac{K_1(W_c(\mu))}{K_{1/2}(W_c(\mu))}.
\end{align}
Moreover, we still have to prove that $\frac{\partial F}{\partial W}(W_c(\mu),1/2)>0$. Actually, it is enough to prove that  for every $W>0$,
$$1+\frac{1}{2W}-\frac{Q(W,3/2)}{Q(W,1/2)}>0.$$
Exactly as in \eqref{critic7}, one can prove that $$Q(W,1/2)=\frac{K_0(W)}{K_{1/2}(W)}.$$
Therefore, we have to prove that for every $W>0$,
$$1+\frac{1}{2W}>\frac{K_1(W)}{K_0(W)}.$$
Nevertheless, it is exactly Corollary 3.3 in \cite{yangchu}.
\end{proof}
\subsection{Proof of Proposition \ref{estimatevrjp}}
\begin{proof}[Proof of Proposition \ref{estimatevrjp}]
Recall from Proposition \ref{mixturetree} that the measure $\mathbf{P}_{\mu,W}^{VRJP}$ is defined as follows: 
\begin{itemize}
\item First, under measure $\P_{\mu,W}$, we choose randomly a Galton-Watson tree $V$ and the random conductances $c$ on $V$ which are given by Proposition \ref{mixturetree}.
\item Secondly, we choose randomly a trajectory on $V$ for the discrete-time process $(\tilde{Z}_n)_{n\in\N}$ with distribution $P_{c,o}$ where $P_{c,o}$ is the law of a random walk on the tree $(V,E)$ starting from $o$ with conductances $c$. 
\end{itemize}
\textbf{Step 1: proof of the lower bound.} Let $n\in\N^*$. By Jensen's inequality, it holds that
\begin{align}
\frac{1}{\mathbf{P}_{\mu,W}^{VRJP}(\tau_o^+>\tau_n)}&=\frac{1}{\E_{\mu,W}\left[P_{c,o}(\tau_o^+>\tau_n) \right]}\nonumber\\
&\leq \E_{\mu,W}\left[\frac{1}{P_{c,o}(\tau_o^+>\tau_n)} \right]. \label{vrjp1}
\end{align}
However, by definition of the effective resistance, we know that
$$\frac{1}{P_{c,o}(\tau_o^+>\tau_n)}=W\left(\sum\limits_{\cev{i}=o}A_i\right)\times\mathcal{R}(o\longleftrightarrow\delta_n).$$
Therefore, by Proposition \ref{resistance}
$$\frac{1}{P_{c,o}(\tau_o^+>\tau_n)}=W\left(\sum\limits_{\cev{i}=o}A_i\right)\times\tilde{G}_n(o,o).$$
Combining this with (\ref{vrjp1}) and Cauchy-Schwarz inequality, there exists a positive constant $C$ such that
\begin{align}
\frac{1}{\mathbf{P}_{\mu,W}^{VRJP}(\tau_o^+>\tau_n)}\leq C\sqrt{\E_{\mu,W}\left[\tilde{G}_n(o,o)^2\right]}.\label{vrjp2}
\end{align}
Combining (\ref{estimationtildeg}) and (\ref{vrjp2}), we obtain
$$\frac{1}{\mathbf{P}_{\mu,W}^{VRJP}(\tau_o^+>\tau_n)}\leq e^{2\tau(m,W)n+o(n)}.$$
This is exactly the lower bound in Proposition \ref{estimatevrjp}.~\\
\textbf{Step 2: proof of the upper bound.}
Let $\alpha\in]0,t^*(m,W)/2[$. Remark that $t^*(m,W)/2<1/4$ because $W<W_c(\mu)$. Let $n\in\N^*$. It holds that
\begin{align}
\mathbf{P}_{\mu,W}^{VRJP}(\tau_o^+>\tau_n)&=\E_{\mu,W}\left[P_{c,o}(\tau_o^+>\tau_n) \right]\nonumber\\
&\leq \E_{\mu,W}\left[P_{c,o}(\tau_o^+>\tau_n)^{\alpha} \right]. \label{vrjp3}
\end{align}
Furthermore, by definition of the effective conductance $\mathcal{C}(o\longleftrightarrow\delta_n)$ between $o$ and level $n$ of the tree, we know that
\begin{align}
P_{c,o}(\tau_o^+>\tau_n)=\frac{\mathcal{C}(o\longleftrightarrow \delta_n)}{W\sum\limits_{\cev{i}=o}A_i}.\label{vrjp4}
\end{align}
Let $\varepsilon>0$ such that $(1+2\varepsilon)\alpha<t^*(m,W)/2$. Combining Hölder inequality, (\ref{vrjp3}) and (\ref{vrjp4}), there exists $C>0$ such that
\begin{align}
\mathbf{P}_{\mu,W}^{VRJP}(\tau_o^+>\tau_n)\leq C\E_{\mu,W}\left[\mathcal{C}(o\longleftrightarrow\delta_n)^{(1+\varepsilon)\alpha}\right]^{1/(1+\varepsilon)}.\label{vrjp5}
\end{align}
However, $\tilde{G}_n(o,o)^{-1}=\mathcal{C}(o\longleftrightarrow \delta_n)$. Consequently, following exactly the same lines as in (\ref{inegarecu1bis}), we get
$$\mathcal{C}(o\longleftrightarrow\delta_n)\leq W e^{-2\tau(m,W)n}\times \underset{|x|=n}\max\hspace{0.1 cm} A_x^{-1}\times \mathcal{W}_{n,2/t^*(m,W)}.$$
Combining this with (\ref{vrjp5}), it yields
\begin{align}
\mathbf{P}_{\mu,W}^{VRJP}(\tau_o^+>\tau_n)\leq Ce^{-2\alpha\tau(m,W)n}\E_{\mu,W}\left[\underset{|x|=n}\max\hspace{0.1 cm} A_x^{-(1+\varepsilon)\alpha}\times \mathcal{W}_{n,2/t^*(m,W)}^{(1+\varepsilon)\alpha} \right]^{1/(1+\varepsilon)}.\label{vrjp55}
\end{align}
Moreover, by Hölder inequality, we get
\begin{align}
\E_{\mu,W}\left[\underset{|x|=n}\max\hspace{0.1 cm} A_x^{-(1+\varepsilon)\alpha}\times \mathcal{W}_{n,2/t^*(m,W)}^{(1+\varepsilon)\alpha} \right]&\nonumber\\
&\hspace{-5 cm}\leq\E_{\mu,W}\left[\underset{|x|=n}\max\hspace{0.1 cm} A_x^{-\alpha(1+\varepsilon)(1+2\varepsilon)/\varepsilon} \right]^{\varepsilon/(1+2\varepsilon)}\hspace{-1 cm}\times\E_{\mu,W}\left[ \mathcal{W}_{n,2/t^*(m,W)}^{(1+2\varepsilon)\alpha}\right]^{1/(1+2\varepsilon)}\label{vrjp6}
\end{align}
One can prove that the first term in (\ref{vrjp6}) has at most polynomial growth by following exactly the same lines as for the proof of (\ref{casint}). Moreover, the second term in (\ref{vrjp6}) decreases with a polynomial decay by Proposition \ref{petitsmomentsbranch} because $\alpha(1+2\varepsilon)<t^*(m,W)/2$. Together with (\ref{vrjp55}), as $\alpha$ can be taken as close from $t^*(m,W)/2$ as we want, this concludes the proof.
\end{proof}
\section{The critical point}
\subsection{Proof of Theorem \ref{cvpsipointcrit}}
Now, we are going to prove Theorem \ref{cvpsipointcrit} which describes the asymptotic behaviour of $(\psi_n(o))_{n\in\N}$ at the critical point. 
\begin{proof}[Proof of Theorem \ref{cvpsipointcrit}]
For simplicity of notation, we write $W=W_c(\mu)$ in the entirety of this proof. Exactly as in the proof of Theorem \ref{rate}, by using Lemma \ref{couplinglemma}, we only need to find the almost sure behaviour of $\mathcal{C}(o\longleftrightarrow\delta_n)$, the effective conductance associated with the VRJP,  in order to get the asymptotics of $\psi_n(o)^2$. Remember that the local conductance from any vertex $x\in V\backslash\{o\}$ to $\cev{x}$ is
$$W A_x^{-1}\left(\prod\limits_{o<u\leq x} A_u^2\right)$$
which is not exactly the effective conductance associated with a branching random walk. Remark that for every $n\in\N$,
\begin{align}
W\underset{|z|\leq n}\min\hspace{0.1 cm} A_z^{-1}\varrho_n\leq \mathcal{C}(o\longleftrightarrow\delta_n)\leq W\underset{|z|\leq n}\max\hspace{0.1 cm} A_z^{-1}\varrho_n \label{critproof1}
\end{align}
where $\varrho_n$ is the effective conductance from $o$ to level $n$ when the local conductance from any vertex $x\in V\backslash\{o\}$ to $\cev{x}$ is given by
$$\left(\prod\limits_{o<u\leq x} A_u^2\right).$$ As usual, $\underset{|z|\leq n}\min\hspace{0.1 cm} A_z^{-1}$ and $\underset{|z|\leq n}\max\hspace{0.1 cm} A_z^{-1}$ have polynomial asymptotics almost surely. Thus, we only need to focus on the behaviour of $(\varrho_n)_{n\in\N}$. For every $x\in V$, let us denote
$$\hat{S}(x)=-2\sum\limits_{o<u\leq x} \ln(A_u).$$
We write $\hat{\psi}(t)=\ln\left(\E_{\mu,W}\left[\sum\limits_{|x|=1}e^{-t\hat{S}(x)}\right] \right)=\ln\left(\E_{\mu,W}\left[\sum\limits_{|x|=1}A_x^{2t} \right]\right)$. 

As we are at the critical point and thanks to Proposition \ref{pointcritloc}, $\hat{\psi}$ strictly decreases on $[0,1/4]$ and increases strictly on $[1/4,1]$, $\hat{\psi}(1/4)=0$  and $\hat{\psi}'(1/4)=0$. Our $\varrho_n$ is exactly the same as the one defined in \cite{FHS} with the branching random walk $\hat{S}$. By the proof of Theorem 1.2 in \cite{FHS}, we get that, $\P_{\mu,W}$-a.s, 
$$\underset{n\rightarrow+\infty}\lim \frac{\ln(\varrho_n)}{n^{1/3}}=-\left(\frac{3\pi^2}{2}\times 4\times\hat{\psi}'(1/4) \right)^{1/3}=-\left(24\pi^2\E_{\mu,W}\left[\sum\limits_{|x|=1}A_x^{1/2}\ln(A_x)^2 \right] \right)^{1/3}.$$
This concludes the proof.
\end{proof}
\subsection{Positive recurrence at the critical point}
Now, let us prove Theorem \ref{pointcrit}.
\begin{proof}[Proof of Theorem \ref{pointcrit}]
We want to prove the positive recurrence of the discrete process $(\tilde{Z}_n)_{n\in\N}$ associated with $(Z_t)_{t\geq 0}$. By Proposition \ref{mixturetree}, $(\tilde{Z}_n)_{n\in\N}$ is a Markov chain in random conductances with conductances given by 
$$c(x,\cev{x})=We^{U_x+U_{\cev{x}}}=WA_x\prod\limits_{o<u\leq \cev{x}}A_u^2$$
for every $x\in V\backslash\{o\}$. For every $x\in V$, let us define $$\tilde{S}(x)=-\frac{1}{2}\sum\limits_{o<y\leq x}\ln(A_u).$$ We assumed that $W=W_c(\mu)$, that is, $mQ(W,1/2)=1$ by Proposition \ref{pointcritloc}. Therefore, $\{(x,\tilde{S}(x)),x\in V\}$ is a branching random walk which satisfies hypothesis (\ref{boundary}). This is easily checked that it satisfies also (\ref{fonctionbiendef}). Moreover it satisfies hypothesis (\ref{petitsmoments}) by hypothesis $A_3$. Therefore, we are allowed to use the results of Hu and Shi (Propositions \ref{petitsmomentsbranch} and \ref{hushi}.) with this branching random walk. Following the notations of Hu and Shi, we define
$$\mathcal{W}_{n,4}:=\sum\limits_{|x|=n}e^{-4\tilde{S}(x)}=\sum\limits_{|x|=n}\prod\limits_{o<u\leq x}A_u^2$$ and
$$\mathcal{W}_n:=\sum\limits_{|x|=n}e^{-\tilde{S}(x)}=\sum\limits_{|x|=n}\prod\limits_{o<u\leq x}A_u^{1/2}.$$
Further, for every $n\in\N^*$, let us define $$\Lambda_n:=\sum\limits_{|x|=n} c(x,\cev{x}).$$ In order to prove Theorem \ref{pointcrit}, this is enough to prove that for some $r\in]0,1[$,
\begin{align}\sum\limits_{n=1}^{+\infty} \E_{\mu,W}\left[\Lambda_n^r\right]<+\infty.  \label{choseaprouver}
\end{align}
Let $n\in\N^*$ and $r\in]0,1[$. $r$ shall be made precise later in the proof. First, let us remark that,
\begin{align}
\E_{\mu,W}\left[\Lambda_n^r \right]&\leq \E_{\mu,W}\left[\left(\sum\limits_{|x|=n}\left(\prod\limits_{o<u\leq x} A_u^2\right)A_x^{-1}\textbf{1}_{{A_x}\geq1}\right)^r \right] \nonumber\\
&\hspace{0,4 cm}+\E_{\mu,W}\left[\left(\sum\limits_{|x|=n}\left(\prod\limits_{o<u\leq x} A_u^2\right)A_x^{-1}\textbf{1}_{{A_x}\leq 1}\right)^r \right]\nonumber\\
&\leq \E_{\mu,W}\left[\mathcal{W}_{n,4}^r\right] + \underbrace{\E_{\mu,W}\left[\left(\sum\limits_{|x|=n}\left(\prod\limits_{o<u\leq x} A_u^2\right)A_x^{-1}\textbf{1}_{{A_x}\leq 1}\right)^r \right]}_{(a)}.\label{c1}
\end{align}
For every  $y\in V$, let us define the random variable
$$\nu_y=\sum\limits_{\cev{x}=y}1$$
which is the number of children of $y$. Then, it holds that,
\begin{align}
(a)&=\E_{\mu,W}\left[\left(\sum\limits_{|y|=n-1}\left(\prod\limits_{o<u\leq y} A_u^2\right)\sum\limits_{\cev{x}=y}A_x\textbf{1}_{{A_x}\leq1}\right)^r \right]\nonumber \\
&\leq \E_{\mu,W}\left[\left(\sum\limits_{|y|=n-1}\left(\prod\limits_{o<u\leq y} A_u^2\right)\nu_y\right)^r \right]\nonumber\\
&\leq n^{3r/2} \E_{\mu,W}[\mathcal{W}_{{n-1},4}^r]+\underbrace{\E_{\mu,W}\left[\left(\sum\limits_{|y|=n-1}\left(\prod\limits_{o<u\leq y} A_u^2\right)\nu_y\textbf{1}_{\nu_y\geq n^{3/2}}\right)^r \right]}_{(b)}.\label{c2}
\end{align}
Moreover, by Jensen's inequality, if $r<1/4$, we get,
\begin{align}
(b) &\leq \E_{\mu,W}\left[\left(\sum\limits_{|y|=n-1}\left(\prod\limits_{o<u\leq y} A_u^2\right)\nu_y\textbf{1}_{\nu_y\geq n^{3/2}}\right)^{1/4}\right]^{4r}\nonumber\\
&\leq \E_{\mu,W}\left[\sum\limits_{|y|=n-1}\left(\prod\limits_{o<u\leq y} A_u^{1/2}\right)\right]^{4r}\E_{\mu,W}\left[\nu^{1/4}\textbf{1}_{\nu\geq n^{3/2}}\right]^{4r}\nonumber\\
&=\E_{\mu,W}\left[\mathcal{W}_{n-1}\right]^{4r}\E_{\mu,W}\left[\nu^{1/4}\textbf{1}_{\nu\geq n^{3/2}}\right]^{4r}\nonumber\\
&=\E_{\mu,W}\left[\nu^{1/4}\textbf{1}_{\nu\geq n^{3/2}}\right]^{4r}\label{c3}
\end{align}
where $\nu$ has the same distribution as $\nu_y$ for any $y\in V$. The last equality comes from the fact that $(\mathcal{W}_n)_{n\in\N}$ is a martingale because the branching random walk $\tilde{S}$ satisfies hypothesis (\ref{boundary}).
Combining identities (\ref{c1}), (\ref{c2}) and (\ref{c3}), in order to make $\E_{\mu,W}\left[\Lambda_n^r \right]$ summable, we need
$$n^{3r/2}\E_{\mu,W}\left[\mathcal{W}_{n,4}^r \right] \text{ and  } \hspace{0.1 cm}\E_{\mu,W}\left[\nu^{1/4}\textbf{1}_{\nu\geq n^{3/2}}\right]^{4r}$$
to be summable. Moreover, recall we assumed that $r<1/4$. By Proposition \ref{petitsmomentsbranch}, we know that
$$n^{3r/2}\E_{\mu,W}\left[\mathcal{W}_{n,4}^r \right]=n^{3r/2}\times n^{-6r+o(1)}=n^{-9r/2+o(1)}.$$
Moreover by Hölder's inequality with $p=4$,
$$\E_{\mu,W}\left[\nu^{1/4}\textbf{1}_{\nu\geq n^{3/2}}\right]^{4r}\leq n^{-9r/2} .$$
In order to conclude, we only need to choose $r$ between $2/9$ and $1/4$ which is possible because $2/9<1/4$.
\end{proof}
\section{Acknowledgments}
I would like to thank my Ph.D supervisors Christophe Sabot and Xinxin Chen for suggesting working on this topic and for their very useful pieces of advice.
\bibliographystyle{alpha}
\bibliography{Bibliglob}
\end{document}